\newtheorem{theorem}{\bf Theorem}
\newtheorem{corollary}[theorem]{\bf Corollary}
\newtheorem{lemma}[theorem]{\bf Lemma}
\newtheorem{claim}{\bf Claim}
\newtheorem{proposition}[theorem]{\bf Proposition}
\newtheorem{definition}[theorem]{\bf Definition}
\newtheorem{remark}[theorem]{\bf Remark}
\numberwithin{equation}{section}
\numberwithin{theorem}{section}
\numberwithin{figure}{section}
\def\R{\mathbb{R}}
\def\L{\mathbb{L}}
\def\R{\mathbb{R}}
\def\phie3{[\varphi, \vec{e}_3]}
\def\*phie3{[\varphi^*, \vec{e}_3]}
\def\ll{\langle\langle}
\def\gg{\rangle\rangle}
\def\cxi{\bar{\xi}}
\begin{document}
\renewcommand{\thefootnote}{}
\renewcommand{\thefootnote}{}
\footnotetext{Research partially supported by Ministerio de Ciencia e Innovaci\'on Grants No: PID2020-118137GB-I00/AEI/10.13039/501100011033, PID2021-126217NB-100 and the `Maria de Maeztu'' Excellence
Unit IMAG, reference CEX2020-001105-M, funded by
MCIN/AEI/10.13039/501100011033, and CNPq - Conselho Nacional de Desenvolvimento Cient\'ifico e Tecnol\'ogico, Brazil, grant number 402589/2022-0.}

\title{Stationary surfaces of height-dependent weighted area functionals in $\mathbb{R}^3$ and $\mathbb{L}^3$} 
\author{Antonio Mart\'inez$^1$, A. L. Mart\'inez-Trivi\~no$^2$, J. P. dos Santos$^3$}
\vspace{.1in}
\date{}
\maketitle
{
\noindent $^1$Departamento de Geometr\'\i a y Topolog\'\i a, Universidad de Granada, E-18071 Granada, Spain\\ 
\noindent $^2$ Departamento de Matem\'atica Aplicada, Universidad de C\'ordoba, E-14071 C\'ordoba, Spain \\
\noindent $^3$ Departamento de Matem\'atica, Universidade de Bras\'\i lia, Bras\'\i lia-DF, Brazil \\ \\
e-mails: amartine@ugr.es, almartinez@uco.es, joaopsantos@unb.br}
\\
\begin{abstract} We describe a general correspondence between weighted minimal surfaces in $\R^3$  and weighted maximal surfaces with some admissible singularities in $\mathbb{L}^3$, for a class of functions $\varphi$ which provides the corresponding weight.  For these families of surfaces, we provide a Weierstrass representation  when $\dot{\varphi}\neq 0$ and analyze in detail the asymptotic behavior of both such a weighted maximal surface around its singular set and its corresponding weighted minimal immersion  around the nodal set of its angle function, establishing criteria that allow us to easily determine the type of singularity and classify the associated moduli spaces.  \\
\end{abstract}

\section{Introduction}
In this paper we consider  the class of stationary surfaces for the  following weighted area functional 
\begin{equation}
\label{area}
\mathcal{A}^{\overline{\varphi}}(\Sigma)=\int_{\Sigma}e^{\epsilon\overline{\varphi}}\, d \Sigma, \qquad \epsilon\in\{-1,1\},
\end{equation}
on isometric immersions of  Riemannian  surfaces $\Sigma$ in a domain $\mathfrak{D}^3$ of $ \mathbb{R}^3$ when $\epsilon=1$ (or $\mathbb{L}^3$ when $\epsilon=-1$) where $\overline{\varphi}$ is the restriction on $\Sigma$ of a smooth function $\varphi:\mathfrak{D}^3\rightarrow \R$ depending only on the coordinate of  $\mathfrak{D}^3$  in the direction of $\vec{e}_3= (0, 0, 1)$, $\varphi(q)=\varphi(\langle q,\vec{e}_3\rangle$), $q\in \mathfrak{D}^3$ and 
$d\Sigma$ denotes the volume element induced on $\Sigma$ by the  Euclidean (or Lorentzian) metric $\langle\cdot,\cdot\rangle:= dx^2+dy^2+dz^2$ ($\ll\cdot,\cdot\gg:= dx^2+dy^2-dz^2$). 

The Euler-Lagrange equation of \eqref{area} is given in terms of the mean curvature vector $\textbf{H}$ of $\Sigma$ as follows
\begin{equation}
\label{meancurvature}
\textbf{H}=(\epsilon\overline{\nabla}( \varphi))^{\perp} = \dot{\varphi} \ \vec{e}_3^{\,\perp},
\end{equation}
where $\perp$ is the projection on the normal bundle, $\overline{\nabla}$ stands the usual gradient operator in $\R^3$  (or
$\mathbb{L}^3$) and  by dot we denote derivative of $\varphi$ with respect to the height function.
\\

Any critical point of \eqref{area}  in $\mathbb{R}^{3}$ (or $\mathbb{L}^3$)   will be called  {\sl $[\varphi, \vec{e}_3]$-minimal } (or {\sl spacelike $[\varphi, \vec{e}_3]$-maximal}) surface. They can also be characterized, respectively, as minimal (spacelike maximal) surfaces in $\R^3$ endowed with the conformally Euclidean (Lorentzian) changed metric, $\langle \cdot,\cdot\rangle^\varphi = e^{\varphi}\langle \cdot,\cdot\rangle$, ($\ll \cdot,\cdot\gg^\varphi = e^{-\varphi}\ll \cdot,\cdot\gg$), see \cite{Ilm94}.

Although minimal (maximal) immersions are the best known examples in this large family of surfaces, during  the last few years a renewed interest has arisen in their study and  important advances have been obtained for some particular functions $\varphi$, namely,
\begin{itemize}
\item when $\varphi$ is just the height function, $\varphi(p)=\langle p,\vec{e}_3\rangle$, we have the translating solitons, that is, surfaces $\Sigma$ such that $$ t \rightarrow \Sigma + t \vec{e}_3 $$
is a mean curvature flow, i.e. the normal component of the  velocity at  each point is equal to the mean curvature at that point (see \cite{CSS,QD,HMW, HIMW,HIMW2,MSHS1}).
\item when  $\varphi(p) =\alpha \log \langle p,\vec{e}_3\rangle$, $\alpha$=const, we have singular $\alpha$-minimal ($\alpha$-maximal) surface.  In the Euclidean case and when  $\alpha=1$,  $\Sigma$ describes the shape of a ``hanging roof'', i.e. a heavy surface in a gravitational field. Such surfaces are of importance for the construction of perfect domes (see \cite{D,DH,Rafa1,Rafa2,Rafa3,Rafa4,MMT-JMAA}).
\end{itemize} 
The studies carried out to date have addressed essentially two lines of research. One of them allows us to understand the difference we find between the geometry of a minimal (maximal) surface with the cases in which $\dot{\varphi}\neq 0$, and the other one refers to the need to compensate the lack of complete examples in $\mathbb{L}^3$ (see \cite{Ca,CQ}) with the study of $[\varphi,\vec{e}_3]$-maximal surfaces  with some admissible singularities (see \cite{ER,FLR,FSUY,Koba,UY-HM}).

In the present work we follow this last direction and 
introduce the notion of $[\varphi^*,\vec{e}_3]$-maxface which is nothing other than a  spacelike $[\varphi^*,\vec{e}_3]$-maximal surface in $\mathbb{L}^3$ with some admissible singularities but with a global well-defined Gauss map and  with a partner  in $\R^3$ 
which is a $ [ \varphi,\vec{e}_3]$-minimal surface for some  $\varphi$ determined  by $\varphi^*$ up to a constant. The partner is obtained through a correspondence we will introduce, which generalizes the well-known Calabi's correspondence between minimal surfaces in $\mathbb{R}^3$ and maximal surfaces in $\mathbb{L}^3$ (see \cite{Ca,MMT-NL}).

Our goal is to analyze in detail the asymptotic behavior of both the $[\varphi^*,\vec{e}_3]$-maxface around its singular set and its corresponding  $ [ \varphi,\vec{e}_3]$-minimal immersion  around the nodal set of its angle function, establishing criteria that allow us to easily determine the type of singularity and classifying the associated moduli spaces.

Among our main results, we begin by highlighting that when $\varphi$ is real analytic and $\dot{\varphi}\neq0$,  $ [ \varphi,\vec{e}_3]$-minimal immersions in $\R^3$ are  uniquely determined around a curve where the mean curvature vanishes and the Gauss curvature is non-zero (see Corollary  \ref{nodal-unicity}). 

We also highlight the following classification results
\setcounter{theorem}{0}
\renewcommand{\thetheorem}{\Alph{theorem}}
\begin{theorem}\label{Th-A}Let $I^*\subseteq \R$  be an open interval,  $\varphi^*\in {\cal C}^\omega(I^*)$ and $p^*\in \mathbb{L}^3$ with $-\ll p^*,\vec{e}_3\gg \in I^*$. 
Consider ${\cal M}_1$ the class of spacelike $\*phie3$-maxfaces $\psi^*: \Sigma\rightarrow\mathbb{L}^3$ such that $p\in\Sigma$ is a non-degenerate singular point of $\psi^*$ with $\psi^*(p)=p^*$ and take 
${\cal M}_2(\varepsilon)=\{ (A^*,B^*)\in {\cal C}^\omega(]-\varepsilon,\varepsilon[)\times{\cal C}^\omega(]-\varepsilon,\varepsilon[) | \ \ A^{*2 }+ B^{*2} \neq 0\}$. Then, for $\varepsilon$ small enough,  there exists an explicitly constructed  bijective correspondence $ {\cal M}_1\longleftrightarrow {\cal M}_2(\varepsilon)$.\end{theorem}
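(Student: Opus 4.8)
The plan is to factor the correspondence through the generalized Calabi correspondence introduced above, which assigns to each $[\varphi^*,\vec{e}_3]$-maxface a partner $[\varphi,\vec{e}_3]$-minimal immersion in $\R^3$. Under this correspondence the singular set of the maxface is carried to the nodal set of the angle function $\nu=\langle N,\vec{e}_3\rangle$ of the partner, equivalently the locus where the partner's mean curvature vanishes (since $\mathbf H=\dot\varphi\,\nu\,N$ and $\dot\varphi\neq0$). A non-degenerate singular point $p$ of $\psi^*$ thus corresponds to a point where $\nu$ vanishes with non-vanishing gradient and the Gauss curvature is non-zero --- exactly the hypotheses of Corollary \ref{nodal-unicity}. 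The prescribed $p^*$, whose height $-\ll p^*,\vec{e}_3\gg$ lies in $I^*$, fixes both the position in $\mathbb L^3$ and the base height at which $\varphi^*$, and hence the partner's weight $\varphi$, is evaluated.

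For the forward map $\mathcal M_1\to\mathcal M_2(\varepsilon)$, I would take $\psi^*\in\mathcal M_1$, pass to its partner minimal surface $\psi$, and consider the nodal curve of $\nu$ through the point corresponding to $p$. Restricting the holomorphic Weierstrass data of $\psi$ to this real-analytic curve and encoding it as a pair of real-analytic functions on $]-\varepsilon,\varepsilon[$ yields $(A^*,B^*)$; the non-degeneracy of the singular point translates precisely into the pointwise condition $A^{*2}+B^{*2}\neq0$, so $(A^*,B^*)\in\mathcal M_2(\varepsilon)$. This assignment is well defined, and in fact injective, because Corollary \ref{nodal-unicity} shows that the germ of $\psi$ along the nodal curve --- hence $\psi^*$ itself --- is determined by that data.

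For the inverse map $\mathcal M_2(\varepsilon)\to\mathcal M_1$, I would extend $A^*,B^*$ holomorphically to a complex neighborhood of $0$, assemble the Weierstrass data of a $[\varphi,\vec{e}_3]$-minimal surface, and integrate the associated weighted $1$-forms, normalizing the integration constant so that the partner maxface obtained via the Calabi correspondence satisfies $\psi^*(p)=p^*$. The hypothesis $A^{*2}+B^{*2}\neq0$ ensures that $\nu$ has a non-degenerate zero along a regular curve, so the resulting maxface has a non-degenerate singular point at $p$ and lies in $\mathcal M_1$. Taking $\varepsilon$ small enough keeps the construction inside a single adapted chart and guarantees convergence of the analytic continuation.

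That the two maps are mutually inverse follows from the uniqueness of analytic continuation, used exactly as in Corollary \ref{nodal-unicity}: the data built in the inverse map restricts back to $(A^*,B^*)$, while any $\psi^*\in\mathcal M_1$ is recovered from its boundary data because its germ is fixed by the real-axis restriction of the holomorphic Weierstrass data. The main obstacle is the existence half of the inverse map: one must check that the weighted forms --- whose $\varphi$-dependence enters through $\dot\varphi$ along the surface --- genuinely integrate to a $[\varphi,\vec{e}_3]$-minimal immersion, and, more delicately, that the type and non-degeneracy of the singularity are controlled exactly by $A^{*2}+B^{*2}\neq0$. Because the weight distorts the classical minimal/maxface computation, establishing this equivalence is what forces $\varepsilon$ to be small.
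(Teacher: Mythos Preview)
Your overall architecture --- pass through the Calabi correspondence, identify the singular set with the nodal curve of the angle function, and invoke the uniqueness statement behind Corollary \ref{nodal-unicity} for injectivity --- matches the paper. But there is a genuine gap in your inverse map, and a related vagueness in the forward one.

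The gap is in the sentence ``extend $A^*,B^*$ holomorphically\ldots\ assemble the Weierstrass data\ldots\ and integrate the associated weighted $1$-forms.'' In the weighted setting the Gauss map $g$ is \emph{not} holomorphic: it satisfies the nonlinear elliptic PDE \eqref{int-phiminimas} (equivalently \eqref{int-phi*}), and the representation formulas \eqref{weierstrass-phiminimas}, \eqref{weierstrass-phi*} involve $\bar g_\xi$ rather than $g_\xi$. So there is no ``holomorphic Weierstrass data'' to continue off the real axis, and the existence step as you describe it does not go through. What the paper actually does is pose a Cauchy problem for the second-order analytic system \eqref{eq:system-psi} (equivalently \eqref{system-psi*}) with initial data on $\{v=0\}$ built from $A,B$ (resp.\ $A^*,B^*$), and invoke Cauchy--Kowalevski (Theorems \ref{existence-psi} and \ref{existence-psi*}). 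This simultaneously gives existence and yields $\eta_v(u,0)=1$, which is what delivers non-degeneracy; it is not a direct translation of $A^{*2}+B^{*2}\neq0$ as you suggest --- that condition is what makes the partner $\psi$ an \emph{immersion}.

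Relatedly, you never say what $(A^*,B^*)$ actually are. The paper's content here is the construction of a \emph{canonical conformal parameter} $\zeta=u+iv$ around $p$ (Proposition \ref{canonical-parametrization}), obtained by holomorphically extending the arc-length of the spherical curve $\Gamma=N\circ\alpha$; in this parameter $N(u,0)=(\cos u,-\sin u,0)$, the singular curve is $\{v=0\}$, and one has $\psi^*_u(u,0)=A^*(u)(\cos u,-\sin u,1)$, $\psi^*_v(u,0)=B^*(u)(\cos u,-\sin u,1)$. The map $\Upsilon$ is then simply $\psi^*\mapsto(A^*,B^*)$. Without this normalization the forward map is not well defined (different conformal parameters give different restrictions), and without it the Cauchy data for the inverse map cannot even be written down.
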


\begin{theorem}\label{Th-B}Let $\varphi^*\in {\cal C}^\omega(I^*)$ and $p^*=(p^*_1,p^*_2,p^*_3)\in\mathbb{L}^3$ a fixed point such that $p^*_3\in I^*$. Let ${\cal M}$ be  the class of all spacelike $\*phie3$-maximal vertical graphs in $\mathbb{L}^3$, with upwards-pointing normal, having $p^*$ as a non-removable isolated singularity and whose Gaussian curvature does not vanish near $p^*$ (we identify two graphs in ${\cal M}$ if they overlap on an open set containing $p^*$). Then, there is a one-to-one correspondence between ${\cal M}$ and the class $\mathscr{P}$ of analytic regular planar Jordan curves bounding a star-shaped domain at the origin.
\end{theorem}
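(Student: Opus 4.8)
The plan is to extract the planar curve from the local Weierstrass data of the maxface on the link of the isolated singularity, and to invert this via a singular analytic Cauchy problem for the weighted maximal surface equation. First I would normalize $p^*$ to the origin and pass to the Weierstrass representation available for $[\varphi^*,\vec{e}_3]$-maxfaces when $\dot{\varphi}^*\neq 0$. Since $\varphi^*\in\mathcal{C}^\omega$ and spacelike weighted maximal surfaces are real analytic off their singular set, a non-removable isolated singularity with $K\neq 0$ is conelike: the Gauss map $g$, valued in the disk $|g|<1$ for the upwards normal, extends continuously to the puncture with $|g|\to 1$, and its boundary values along a small loop (the link $S^1$ around the puncture) form a real-analytic map into the circle of null directions. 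I would package this boundary behaviour into a pair of real-analytic functions on $S^1$ — the periodic avatar of the pair $(A^*,B^*)$ that governs the non-degenerate singular strip in Theorem \ref{Th-A} — and define $\gamma\colon S^1\to\R^2$ to be the associated planar loop, the origin of $\R^2$ recording the null vertex at $p^*$.

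Next I would show that the conditions defining $\mathcal{M}$ translate one-to-one into those defining $\mathscr{P}$. The non-degeneracy and non-removability of the singularity become the condition $A^{*2}+B^{*2}\neq 0$, i.e. $\gamma$ is a regular analytic curve. The fact that the surface is a vertical graph $z=u$ with upwards normal forces the horizontal projection near $p^*$ to be injective and the direction of $\nabla u$ to wind once around the link; the additional monotonicity of this winding is exactly the statement that $\gamma$ is embedded (Jordan) and that the region it bounds is star-shaped with respect to the origin, each radial direction being attained exactly once. I would make this precise by relating the radial direction of $\gamma$ to the direction of $\nabla u$ and its length to the rate at which $|\nabla u|\to 1$ along rays emanating from the projected singular point, so that positivity of the radial function $\rho(\theta)>0$ is automatic.

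Conversely, given $\gamma\in\mathscr{P}$ I would use star-shapedness to write it as an analytic polar graph $\rho=\rho(\theta)>0$, recover the periodic analytic pair $(A^*,B^*)$ with $A^{*2}+B^{*2}\neq 0$, and feed it as analytic Cauchy data into the weighted maximal surface equation, equivalently into the Weierstrass integration around the puncture. The main analytic point is the period, or closing-up, problem: one must verify that integrating the Weierstrass one-forms around the link yields a single-valued map, so that the resulting surface is an honest graph over a punctured neighbourhood of the origin with $p^*$ an isolated, non-removable singularity and $K\neq 0$. Local solvability is provided by Cauchy--Kovalevskaya together with the non-degeneracy $\rho>0$, which also guarantees the upwards-pointing orientation of the normal.

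Finally, for bijectivity, surjectivity is precisely the construction of the previous paragraph, while injectivity follows from real-analytic unique continuation: two maxfaces in $\mathcal{M}$ with the same $\gamma$ share the same analytic boundary data on the link, so their partner $[\varphi,\vec{e}_3]$-minimal immersions in $\R^3$ share the same nodal curve of the angle function, where $K\neq 0$; by Corollary \ref{nodal-unicity} these partners coincide near the corresponding curve, and transporting this back through the correspondence shows the two maxfaces agree on a neighbourhood of $p^*$, hence are identified in $\mathcal{M}$. I expect the principal obstacle to be the second paragraph, namely proving that the graph-with-upwards-normal hypothesis is equivalent to embeddedness together with star-shapedness of $\gamma$: this requires controlling the winding and monotonicity of $\nabla u$ around the singularity and ruling out self-overlaps of the limit loop, a difficulty tightly linked to the closing-up period condition in the inverse construction.
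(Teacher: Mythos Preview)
Your overall architecture---extract a closed curve from the link of the singularity, invert via an analytic Cauchy problem, and prove injectivity by unique continuation---matches the paper's strategy. However, there is a genuine gap in the first step, and the obstacle you flag in the second step is handled in the paper by a mechanism you do not mention.

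\textbf{The conformal-type dichotomy is missing.} You assert that the Gauss map ``extends continuously to the puncture with $|g|\to 1$, and its boundary values along a small loop \dots form a real-analytic map into the circle.'' This presupposes that the punctured graph, with its induced conformal structure, is conformally an \emph{annulus} rather than a \emph{punctured disk}. The paper does not take this for granted: Section~\ref{sec-5} splits into two cases according to the conformal type of $(D^*_\rho,ds^{*2})$. The punctured-disk case is eliminated by a nontrivial argument (Theorem~\ref{removable}): one shows, using \eqref{eq-graph*} and Stokes' theorem, that the spacelike gradient map $\Gamma=(z^*_{x^*},z^*_{y^*})$ lies in $\mathcal{W}^{1,2}$ of the punctured disk, then applies Courant's lemma to find circles whose $\Gamma$-images collapse to a point, contradicting the diffeomorphism property of $\mu$ in Proposition~\ref{proper}. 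Without this, you have no link $S^1$ on which to read off analytic boundary data, and the whole construction of $\gamma$ is unsupported. You should either supply an alternative proof that the conformal type is annular (equivalently, that the boundary extension you claim actually holds), or reproduce this step.

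\textbf{Star-shapedness and embeddedness.} You correctly identify this as the principal obstacle, but your proposed route (``monotonicity of the winding of $\nabla u$'') is vague. The paper's device is cleaner: Proposition~\ref{proper} shows that, because $K^*\neq 0$, the gradient map $\mu=(z^*_{x^*},z^*_{y^*})$ is a $\mathcal{C}^2$-\emph{diffeomorphism} from a punctured neighbourhood onto an annulus $\{R<|\cdot|<1\}$. In the canonical conformal parametrization (Remark~\ref{canonical-z*}), this forces $g(u,0)=e^{iu}$ to be injective on $\mathbb{R}/2\pi\mathbb{Z}$, and one computes directly that $\widetilde{\gamma}(u)=\psi_u(u,0)=b_3(u)(\sin u,\cos u,0)$ with $b_3\neq 0$ (Proposition~\ref{jordan-curve}); this is manifestly a regular Jordan curve bounding a star-shaped domain. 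The same gradient-diffeomorphism fact is the engine behind the removability theorem above, so Proposition~\ref{proper} is really the analytic heart of both difficulties you face.

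\textbf{Closing-up in the inverse direction.} Your period problem is resolved in the paper not by checking integrals of Weierstrass one-forms but by a covering-degree argument (Proposition~\ref{graph-phi*}): the horizontal projection $\Pi$ is a local diffeomorphism on $\mathcal{A}_{\varepsilon'}/(2\pi\mathbb{Z})$, hence a covering of a punctured neighbourhood, and its degree equals that of $g/|g|$ on a nearby level curve; since $g(u,0)=e^{iu}$ extends analytically, this degree is one and $\psi^*$ is globally a graph.

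Your injectivity argument via Corollary~\ref{nodal-unicity} is essentially the paper's (uniqueness in the Cauchy problem \eqref{cauchy-psi*}), so that part is fine.
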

About these results, we should point out that real analyticity of $\varphi$  is used  in order to ensure the previous existence and uniqueness properties. Furthermore, in Theorem A, the type of singularity is completely determined by the zeros of the functions $A^*$ and $B^*$ (see Proposition \ref{type-singularity}).  
Although the mean curvature goes to infinity on the singular set of a $\*phie3$-maxface, the proof of theorems A and B have been inspired by some ideas developed in \cite{GJM-IHP} about spacelike maximal graphs with prescribed analytic mean curvature and by the correspondence between $\phie3$-minimal and $\*phie3$-maxfaces described in Section \ref{sec-2}, which has allowed us to use the existence of a suitable conformal parametrization around the singularity.

The paper is organized as follows. In Section \ref{sec-2}, we  extend the correspondence described in \cite{MMT-NL} between $\phie3$-minimal graphs of $\mathbb{R}^3$ and  $\*phie3$-maximal spacelike graphs of $\mathbb{L}^3$ to   arbitrary  $\phie3$-minimal surfaces and  $\*phie3$-maxfaces. 

In Section  \ref{sec-3}, we show that if $\dot{\varphi}\neq 0$, then  $\phie3$-minimal surfaces and $[\varphi^*,\vec{e}_3]$-maxfaces can be represented in terms of its Gauss map.

In Section  \ref{sec-4},  we provide some criteria on admissible singularities for a $[\varphi^*,\vec{e}_3]$-maxface in terms of geometric data of its $[\varphi,\vec{e}_3]$-minimal partner. We also introduce a  canonical conformal parametrization around any non-degenerate singularity that allows us, on the one hand, to show that if $\varphi$ is analytic and $\dot{\varphi}\neq0$, the  nodal set  for the mean curvature of a $[\varphi,\vec{e}_3]$--minimal surface  determines, uniquely, the surface at the points where the Gaussian curvature is not zero and, on the other hand, to show that the asymptotic behavior of a $[\varphi^*,\vec{e}_3]$-maxface around a non-degenerate singularity depends on two real analytic functions $A^*$ and $B^*$, that allow us to define the correspondence $ {\cal M}_1\longleftrightarrow {\cal M}_2(\varepsilon)$ in  Theorem \ref{Th-A}.

Finally, Section  \ref{sec-5} is devoted  to studying isolated singularities of $[\varphi^*,\vec{e}_3]$-maximal graphs in $\mathbb{L}^3 $ whose Gaussian curvature does not vanish around the singularity and  prove  Theorem \ref{Th-B}.
\setcounter{theorem}{0}
\renewcommand{\thetheorem}{\arabic{theorem}}
\section{The Calabi's pair}\label{sec-2}

Our objective in this section is to extend the correspondence described in \cite{MMT-NL} between $\phie3$-minimal graphs of $\mathbb{R}^3$ and  $\*phie3$-maximal spacelike graphs of $\mathbb{L}^3$ to   arbitrary  $\phie3$-minimal surfaces.
\

Let $\psi=(x,y,z) :\Sigma \rightarrow \mathbb{R}^3$ be an immersion  from a simply-connected surface $\Sigma$ into $\R^3$, we can regard naturally $\Sigma $ as a Riemann surface  with the conformal structure determined by its induced metric $ds^2$. Consider $\xi=u+iv$  a conformal parameter on $\Sigma$ so that $ds^2 = E |d\xi|^2$ for some positive function $E$. Then, it is not difficult to see that $\psi$ is  a $\phie3$-minimal immersion if and only if $\psi$ satisfies in terms of $\xi$ the following elliptic system:
\begin{equation} \label{eq:system-psi}
\begin{array}{rcl}
2 x_{\xi\cxi} + \dot{\varphi} (z_{\xi} x_{\cxi} + z_{\cxi} x_{\xi}) &=& 0, \vspace{.05in}\\
2 y_{\xi\cxi} + \dot{\varphi} (z_{\xi} y_{\cxi} + z_{\cxi} y_{\xi}) &=& 0,\vspace{.05in} \\
2 z_{\xi\cxi}-\dot{\varphi} (|x_{\xi}|^2+|y_{\xi}|^2 - |z_{\xi}|^2)&=& 0.
\end{array}
\end{equation}

We will show that there exists a differentiable map $\psi^{*} : \Sigma  \rightarrow \mathbb{R}^3$  with  $\psi^{*} = (x^{*}, y^{*}, z^{*})$ satisfying:
\begin{equation} \label{CR-*}
\begin{array}{rcl}
x^{*}_{\xi} &=& -i e^{\varphi} y_{\xi},  \\
y^{*}_{\xi} &=&  i e^{\varphi} x_{\xi}, \\
z^{*}_{\xi} &=& e^{\varphi} z_{\xi}.
\end{array}
\end{equation}
\begin{proposition} \label{integrability-sys}
Let $\psi=(x,y,z) :\Sigma\rightarrow \mathbb{R}^3$  be a $\phie3$-minimal immersion and $\xi=u+iv$ is a conformal parameter of $\Sigma$. Then the system \eqref{CR-*} is integrable.
\end{proposition}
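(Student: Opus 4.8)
The plan is to reduce the statement to a closedness condition and then verify it directly from the $\phie3$-minimal system \eqref{eq:system-psi}. Write the three right-hand sides of \eqref{CR-*} as $\Phi^1 = -i e^{\varphi} y_\xi$, $\Phi^2 = i e^{\varphi} x_\xi$, $\Phi^3 = e^{\varphi} z_\xi$. Since $\Sigma$ is simply connected and the sought map $\psi^*=(x^*,y^*,z^*)$ is real-valued, solving $\psi^*_\xi = \Phi$ amounts to finding a real primitive of each real $1$-form $\Phi^k\,d\xi + \overline{\Phi^k}\,d\cxi$. A short computation of the exterior derivative of this form shows $d(\Phi^k\,d\xi + \overline{\Phi^k}\,d\cxi) = (\overline{\Phi^k_\cxi} - \Phi^k_\cxi)\,d\xi\wedge d\cxi$, so closedness (hence, by simple connectivity, exactness and integrability) is equivalent to $\Phi^k_\cxi$ being \emph{real-valued} for each $k$. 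Thus the whole matter reduces to checking $\mathrm{Im}(\Phi^k_\cxi)=0$.

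First I would differentiate, using that $\varphi=\varphi(z)$ depends only on the height, so $\varphi_\cxi = \dot\varphi\,z_\cxi$ and $(e^{\varphi})_\cxi = e^{\varphi}\dot\varphi\,z_\cxi$. Applying $\partial_\cxi$ to $\Phi^1$ gives $\Phi^1_\cxi = -i e^{\varphi}\big(\dot\varphi\,z_\cxi\,y_\xi + y_{\xi\cxi}\big)$, and analogously for $\Phi^2,\Phi^3$. The next step is to eliminate the mixed second derivatives via \eqref{eq:system-psi}. Substituting $y_{\xi\cxi}=-\tfrac12\dot\varphi(z_\xi y_\cxi + z_\cxi y_\xi)$ yields
\[
\dot\varphi\,z_\cxi y_\xi + y_{\xi\cxi} = \tfrac12\dot\varphi\,(z_\cxi y_\xi - z_\xi y_\cxi).
\]
Because $y,z$ are real, $z_\cxi=\overline{z_\xi}$ and $y_\cxi=\overline{y_\xi}$, so the bracket equals $\overline{z_\xi}\,y_\xi - z_\xi\,\overline{y_\xi} = 2i\,\mathrm{Im}(\overline{z_\xi}\,y_\xi)$, which is purely imaginary; multiplied by the prefactor $-i e^{\varphi}$ it becomes real, giving $\mathrm{Im}(\Phi^1_\cxi)=0$. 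The argument for $\Phi^2$ is identical up to the sign of its $+i$ factor, again producing a real quantity after using the first equation of \eqref{eq:system-psi}. For $\Phi^3$ one substitutes the third equation, $z_{\xi\cxi}=\tfrac12\dot\varphi(|x_\xi|^2+|y_\xi|^2-|z_\xi|^2)$, which is manifestly real, while the remaining term $\dot\varphi\,z_\cxi z_\xi = \dot\varphi\,|z_\xi|^2$ is real as well; since the prefactor here is $e^{\varphi}$ with no $i$, $\Phi^3_\cxi$ is real.

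Having verified $\Phi^k_\cxi\in\R$ for $k=1,2,3$, each of the three $1$-forms is closed, hence exact on the simply connected $\Sigma$, which furnishes the real primitives $x^*,y^*,z^*$ and proves that \eqref{CR-*} is integrable. The only place demanding care is the bookkeeping in combining the $(e^{\varphi})_\cxi$ contribution with the substituted mixed derivative so that the $\pm i$ prefactors convert the purely imaginary brackets into real quantities (and, in the third coordinate, so that no spurious imaginary part appears); everything else is routine. As a sanity check one should note that this reality structure is exactly what is forced a posteriori, since for any genuine solution $\Phi^k_\cxi = \psi^{*k}_{\xi\cxi} = \tfrac14\Delta\,\psi^{*k}$ is automatically real.
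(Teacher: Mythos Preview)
Your proof is correct and follows essentially the same approach as the paper. The paper verifies integrability by checking that the mixed partials agree, i.e.\ $(\Phi^k)_{\cxi}=(\overline{\Phi^k})_{\xi}$, while you phrase the identical condition as $\mathrm{Im}(\Phi^k_{\cxi})=0$; in both cases the verification reduces to substituting the equations \eqref{eq:system-psi} into the $\cxi$-derivative of the right-hand sides of \eqref{CR-*}, and the computations match line by line.
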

\begin{proof}
Let us show that the system \eqref{eq:system-psi} implies $x^{*}_{\xi\cxi} = x^{*}_{\cxi\xi}$, $y^{*}_{\xi\cxi} = y^{*}_{\cxi\xi}$ and $z^{*}_{\xi\cxi} = z^{*}_{\cxi\xi}$. In fact, from \eqref{CR-*} we have
\begin{equation} \label{mixed-derivatives}
\begin{array}{rcl}
y^*_{\xi\cxi} &=& i e^{\varphi} z_{\cxi}x_{\xi}+ie^{\varphi}x_{\xi\cxi} \\
y^*_{\cxi\xi} &=& -i e^{\varphi} z_{\xi}x_{\cxi}-ie^{\varphi}x_{\cxi\xi}
\end{array}
\end{equation}
Then 
$$y^*_{\xi\cxi}-y^*_{\cxi\xi} = i e^{\varphi} \left[\dot{\varphi}(z_{\cxi} x_{\xi} + z_{\xi} x_{\cxi} ) + 2x_{\cxi\xi} \right] = 0
$$
where we use the first equation of \eqref{eq:system-psi} in the last equality. Analogously, 
$$x^*_{\xi\cxi}-x^*_{\cxi\xi} = -i e^{\varphi} \left[\dot{\varphi}(z_{\cxi} y_{\xi} + z_{\xi} y_{\cxi} ) + 2y_{\cxi\xi} \right] = 0.$$ The equation $z^{*}_{\xi\cxi} = z^{*}_{\cxi\xi}$ is straightforward.
\end{proof}
As $\Sigma$ is simply-connected, from the above Proposition, we conclude that there exists $\Psi^* = (x^*, y^*, z^*): \Sigma\rightarrow \mathbb{R}^3$ such that \eqref{CR-*} holds for any complex parameter $\xi=u+ i v$.

In what follows let us consider $\L^3$  the Minkowski space $\R^3$ with the Lorentz metric 
\begin{equation} \ll \cdot ,  \cdot \gg = dx^2 + dy^2 - dz^2,\label{mmetric}\end{equation}and $\|V\|:=\sqrt{|\ll V, V \gg|}$ the norm of vector $V$ in $\mathbb{L}^3$.

\begin{proposition}\label{confpara}
Let $\psi=(x,y,z) : \Sigma\rightarrow \mathbb{R}^3$ be a  $\phie3$-minimal immersion, $\xi=u+iv$ be a conformal parameter for $\psi$  and   $\psi^*=(x^*,y^*,z^*) : \Sigma \rightarrow \mathbb{L}^3$ be the map given by the system \eqref{CR-*}. Then, $\xi$ is also a conformal parameter for $\psi^*$,
$$ x_\xi^{*2} + y_\xi^{*2}-z_\xi^{*2}=0,$$
 and  
\begin{align} \label{eq:relation-EE*}
E^*&= 2(|x_\xi^{*}|^2 + |y_\xi^{*}|^2-|z_\xi^{*} |^2)=2e^{2\varphi} \eta^2(|x_\xi|^2 + |y_\xi|^2+|z_\xi|^2)\\
&=e^{2\varphi} \eta^2 E, \nonumber
\end{align} 
where $\eta=\langle N,\vec{e}_3\rangle$ is the angle function of $\psi$ with respect to the unit normal $N = - i \dfrac{\psi_\xi \wedge \psi_{\cxi}}{|\psi_\xi \wedge \psi_{\cxi} |}$ of $\psi$.

Furthermore, $\eta \neq 0$ ( i.e. $\psi=(x,y,z(x,y))$ is locally a vertical graph) if and only if   $\psi^*$ is an immersion.  In this case, a timelike  unit normal for $\psi^*$ is given by
\begin{equation}\label{normal*}
N^* = i \dfrac{\psi^*_\xi \wedge_{\mathbb{L}^3} \psi^*_{\cxi}}{||\psi^*_\xi \wedge_{\mathbb{L}^3} \psi^*_{\cxi}||} = \dfrac{1}{\eta E} (\eta_1, \eta_2, E) = (-z_x,-z_y,\sqrt{1 + z_x^2 + z_y^2}\,),
\end{equation} 
where $\eta>0$, $- 2 i \psi_\xi \wedge\psi_{\cxi}=  (\eta_1,\eta_2,\eta_3)$ and  $ \wedge_{\mathbb{L}^3} $ is the standard cross product in $\mathbb{L}^3$ given by $$\ll\, A \wedge_{\mathbb{L}^3} B,C\,\gg=-det(A,B,C).$$\end{proposition}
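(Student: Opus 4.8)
The plan is to reduce every assertion to the Euclidean conformality relations $x_\xi^2+y_\xi^2+z_\xi^2=0$ and $E=2(|x_\xi|^2+|y_\xi|^2+|z_\xi|^2)$ for $\psi$, together with the real-valued map $\psi^*$ furnished by Proposition~\ref{integrability-sys}. First I would substitute \eqref{CR-*} into $x_\xi^{*2}+y_\xi^{*2}-z_\xi^{*2}$; this equals $-e^{2\varphi}(x_\xi^2+y_\xi^2+z_\xi^2)=0$, so $\ll\psi^*_\xi,\psi^*_\xi\gg=0$. Hence $\xi$ is isothermal for $\psi^*$ and the pulled-back metric is $E^*|d\xi|^2$ with $E^*=2\ll\psi^*_\xi,\psi^*_{\cxi}\gg=2(|x_\xi^{*}|^2+|y_\xi^{*}|^2-|z_\xi^{*}|^2)$, which by \eqref{CR-*} equals $2e^{2\varphi}(|x_\xi|^2+|y_\xi|^2-|z_\xi|^2)$.

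The remaining content of \eqref{eq:relation-EE*} is then the pointwise identity $|x_\xi|^2+|y_\xi|^2-|z_\xi|^2=\eta^2(|x_\xi|^2+|y_\xi|^2+|z_\xi|^2)$. I would deduce it from the single fact $|z_\xi|^2=\tfrac14(1-\eta^2)E$: the tangential part of $\vec{e}_3$ is $\vec{e}_3-\eta N=\nabla z$, whence $|\nabla z|^2=1-\eta^2$, while in the conformal parameter $|\nabla z|^2=\tfrac1E(z_u^2+z_v^2)=\tfrac4E|z_\xi|^2$. Substituting this together with $|x_\xi|^2+|y_\xi|^2+|z_\xi|^2=E/2$ gives the identity and hence $E^*=e^{2\varphi}\eta^2E$. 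The immersion criterion is then immediate: since $E>0$ and $e^{2\varphi}>0$, we have $E^*>0$ precisely when $\eta\neq0$, and there $E^*|d\xi|^2$ is a genuine spacelike metric, so $\psi^*$ is a conformal spacelike immersion. Where $\eta=0$ instead $E^*=0$, and combining with $\ll\psi^*_\xi,\psi^*_\xi\gg=0$ forces $\ll\psi^*_u,\psi^*_u\gg=\ll\psi^*_v,\psi^*_v\gg=\ll\psi^*_u,\psi^*_v\gg=0$; since two Lorentz-orthogonal null vectors of $\mathbb{L}^3$ are proportional, $d\psi^*$ has rank $\le1$ and $\psi^*$ is not an immersion, giving the stated equivalence.

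For the normal \eqref{normal*} assume $\eta>0$. From $-2i\,\psi_\xi\wedge\psi_{\cxi}=\psi_u\wedge\psi_v=(\eta_1,\eta_2,\eta_3)=E\,N$ and the graph normal $N=\eta(-z_x,-z_y,1)$ one checks that $\tfrac1{\eta E}(\eta_1,\eta_2,E)=\tfrac1\eta(N_1,N_2,1)=(-z_x,-z_y,\sqrt{1+z_x^2+z_y^2})=:n$, with $\ll n,n\gg=-1$, so $n$ is unit timelike. I would then verify $\ll n,\psi^*_\xi\gg=0$, which by \eqref{CR-*} amounts to $z_\xi=i(N_2x_\xi-N_1y_\xi)$; writing $N_1,N_2$ through $\psi_\xi\wedge\psi_{\cxi}$ and using $x_\xi^2+y_\xi^2=-z_\xi^2$ together with $|x_\xi|^2+|y_\xi|^2=E/2-|z_\xi|^2$ collapses the right-hand side to $z_\xi$. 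Thus $n$ is the unit timelike normal of $\psi^*$, and it remains only to match it with $i\,\psi^*_\xi\wedge_{\mathbb{L}^3}\psi^*_{\cxi}/\|\psi^*_\xi\wedge_{\mathbb{L}^3}\psi^*_{\cxi}\|$, using that this vector is purely imaginary (so it is real after multiplying by $i$) and that its third component is $e^{2\varphi}(\psi_\xi\wedge\psi_{\cxi})_3=\tfrac i2 e^{2\varphi}E\eta$.

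The hard part will be precisely this last identification: one has to track the factor $i$, the $-\det$ convention defining $\wedge_{\mathbb{L}^3}$, and the reading of $\|\cdot\|$ on the complex vector $\psi^*_\xi\wedge_{\mathbb{L}^3}\psi^*_{\cxi}$, so that the normalization selects the future-pointing representative agreeing with $n$ rather than its time-reverse. All of the other steps are routine substitutions once the identity $|z_\xi|^2=\tfrac14(1-\eta^2)E$ is in hand, which is the one genuinely geometric input of the proof.
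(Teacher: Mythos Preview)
Your proposal is correct and complete. The overall architecture matches the paper's proof: substitute \eqref{CR-*} to get conformality, reduce \eqref{eq:relation-EE*} to an identity for $|x_\xi|^2+|y_\xi|^2-|z_\xi|^2$, read off the immersion criterion, and then verify the normal formula componentwise.

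The one genuinely different step is how you obtain $|x_\xi|^2+|y_\xi|^2-|z_\xi|^2=\eta^2\,\tfrac{E}{2}$. The paper does this algebraically: it expands $\eta_3^2=-4(x_\xi y_{\cxi}-x_{\cxi}y_\xi)^2$ and, using $x_\xi^2+y_\xi^2+z_\xi^2=0$, factors it as $4(|x_\xi|^2+|y_\xi|^2-|z_\xi|^2)(|x_\xi|^2+|y_\xi|^2+|z_\xi|^2)=e^{-2\varphi}E\,E^*$, then uses $\eta=\eta_3/E$. Your route via $|\nabla z|^2=1-\eta^2$ and $|\nabla z|^2=\tfrac{4}{E}|z_\xi|^2$ is shorter and more geometric; it isolates $|z_\xi|^2=\tfrac14(1-\eta^2)E$ as the single scalar identity that does all the work, and avoids the somewhat opaque quadratic expansion. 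Conversely, the paper's computation has the advantage that it simultaneously produces $\ll V,V\gg=-\eta_3^2$ for $V=(\eta_1,\eta_2,E)$, which it then reuses to normalize $N^*$; in your approach you recover that normalization separately from $\ll n,n\gg=-1$ via the graph expression. For the orthogonality $\ll N^*,\psi^*_\xi\gg=0$ the paper checks $\ll V,\psi^*_\xi\gg=0$ directly, while you reduce it to $z_\xi=i(N_2x_\xi-N_1y_\xi)$; these are the same computation written in different variables. Your caution about the final sign/orientation matching with $i\,\psi^*_\xi\wedge_{\mathbb{L}^3}\psi^*_{\cxi}/\|\cdot\|$ is well placed, and your plan to compare third components is exactly what the paper does (it computes $\eta_3^*=e^{2\varphi}\eta_3$).
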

\begin{proof}
From the system \eqref{CR-*} we have,
\begin{equation} \label{coordinates-psi-*}
\psi_\xi^* = e^{\varphi} (-i y_\xi, i x_\xi, z_\xi), \end{equation}
Since  $\xi = u+ i v$ is a conformal parameter of $\psi$, we have from \eqref{coordinates-psi-*}, that
$$
 x_\xi^{*2}+y_\xi^{*2}-z_\xi^{*2} = - e^{2\varphi}(x_\xi^2+y_\xi^2+z_\xi^2) = 0.
$$
Therefore, $\xi = u+iv$ also provides a conformal parameter for $\psi^*$. 
Let us take a further view on $ E^*$. Firstly, if we write
$$
- 2 i\psi_\xi \wedge \psi_{\cxi} = - 2 i  \left( y_\xi z_{\cxi} - y_{\cxi} z_\xi,x_{\cxi} z_\xi - x_\xi z_{\cxi}, x_\xi y_{\cxi} - x_{\cxi}y_\xi \right) =  \left( \eta_1, \eta_2, \eta_3 \right),
$$
 a unit normal for $\psi$ is given by
\begin{equation} \label{normal-psi}
N = \dfrac{1}{E} \left( \eta_1, \eta_2, \eta_3 \right).
\end{equation}
But, from \eqref{CR-*} and  since $ x_\xi^2 + y_\xi^2 + z_\xi^2=0$, we have that 
$$
\begin{array}{rcl}
 \eta_3^2 &=&8 |x_\xi|^2 |y_\xi|^2 - 4x_\xi^2y_{\cxi}^2 -4 x_{\cxi}^2 y_\xi^2 \\
&=&  4 (|x_{\xi}|^2 + |y_{\xi}|^2 - |z_\xi|^2)(|x_{\xi}|^2 + |y_{\xi}|^2 + |z_\xi|^2)\\
&=&  e^{- 2\varphi} E E^*.
\end{array}
$$
It follows from \eqref{normal-psi} that  $\eta = \dfrac{\eta_3}{E}$ and we have the relation \eqref{eq:relation-EE*} which also says  that $\psi^*$ is an immersion if and only if $\eta\neq 0$.

\

Now we proceed in determining a unit normal for $\psi^*$ when $\eta\neq 0$. Consider  $V=(\eta_1,\eta_2,E)$. Since $ E=2 (|x_\xi|^2+|y_\xi|^2+|z_\xi|^2)$ and $x_\xi^2 + y_\xi^2 + z_\xi^2=0$,  from \eqref{CR-*} and by a straightforward computation, we have
$$
\ll \psi_\xi^*,V\gg =\ll \psi_{\cxi}^*,V\gg  = 0.
$$
But $E^2 = \eta_1^2+\eta_2^2+ \eta_3^2$ and then,
\begin{equation}
\ll V, V \gg = \eta_2^2+\eta_1^2-E^2 = -\eta_3^2.
\end{equation}
Consequently, for $\eta_3 \neq 0$, we may assume that  a unit normal for $\psi^*$ is given by
\begin{equation} \label{normal-*}
N^* = \dfrac{1}{\eta_3} (\eta_1, \eta_2, E), \quad \text{with $\eta_3>0$}.
\end{equation}
Furthermore, if we write $2 i \psi_{\xi}^* \wedge_{\mathbb{L}^3} \psi_{\cxi}^* = (\eta_1^*, \eta_2^*, \eta_3^*)$ then 
$$
\begin{array}{rcl}
\eta_3^*= -2 i \ll \vec{e}_3, \psi_\xi^* \wedge_{\mathbb{L}^3} \psi_{\cxi}^* \gg &=& - 2 i e^{2\varphi} (x_\xi y_{\cxi} - y_\xi x_{\cxi})= e^{2\varphi} \eta_3,
\end{array}
$$
and  since $2 ||\psi_\xi^* \wedge_{\mathbb{L}^3} \psi_{\cxi}^*||=E^{*}$, we have
$$
N^*= \frac{1}{\eta E}(\eta_2,\eta_1,E) =  \dfrac{\psi_v^* \wedge_{\mathbb{L}^3} \psi_u^*}{||\psi_u^* \wedge_{\mathbb{L}^3} \psi_v^*||} .
$$
The last equality in \eqref{normal*} is straightforward.
\end{proof}

Assume now that $\psi$ is a $\phie3$-minimal vertical graph $z=z(x,y)$ over some planar domain $\Omega\subseteq \mathbb{R}^2$ and $\xi=u+iv$ a conformal parameter. Then, by the uniformization theorem for nonanalytic metrics, \cite{S}, we know that the change of coordinates $$ \Phi: \Omega \rightarrow {\cal G}=\Phi(\Omega), \qquad (x,y)\rightarrow \Phi(x,y)=(u(x,y),v(x,y)),$$
is a ${\cal C}^2$-diffeomorphism with positive Jacobian and the following  Beltrami system is satisfied
\begin{align}\label{beltrami}
& y_\xi = \frac{1}{\alpha}(\beta - i W)x_\xi,
\end{align}
where $\alpha =1+z_y^2$,  $\beta=-z_x z_y$,  $W=\sqrt{\alpha \varrho - \beta^2}$ and $\varrho= 1 + z_x^2$.

From \eqref{beltrami}, we have
\begin{equation}\label{relaciones}
\begin{array}{rcl}
|y_\xi|^2 & = & \displaystyle \frac{\varrho}{\alpha} \ |x_\xi|^2\vspace{.05in}\\
 y_\xi x_{\cxi}  + y_{\cxi} x_\xi & = & \displaystyle\frac{2\beta}{\alpha} \ |x_\xi|^2\vspace{.05in}\\
y_\xi x_{\cxi}  - y_{\cxi} x_\xi  & = &-i \displaystyle\frac{2W}{\alpha}   \ |x_\xi|^2,
\end{array}
\end{equation}
and  since 
$$
\xi_x =\frac{y_{\cxi}}{ y_{\cxi} x_\xi -y_\xi x_{\cxi}  }, \qquad \xi_y =-\frac{x_{\cxi}}{ y_{\cxi} x_\xi -y_\xi x_{\cxi}  },
$$
then, from \eqref{CR-*}, \eqref{beltrami} and \eqref{relaciones} we obtain the following relations are satisfied,
\begin{equation}\label{partialsxy}
\begin{array}{ll}
x^*_x =-\frac{\varrho}{W}e^\varphi, & x^*_y =\frac{\beta}{W}e^\varphi\vspace{.05in}\\
y^*_x =\frac{\beta}{W}e^\varphi, &y^*_y =-\frac{\alpha}{W}e^\varphi\vspace{.05in}\\
 z^*_x= e^\varphi z_x, &z^*_y=e^\varphi z_y.
 \end{array}
\end{equation}
Now, from \eqref{partialsxy} and by using the expression of the normal $N^*$ in \eqref{normal*} , we can prove that the second fundamental forms of $\psi$ and $\psi^*$ are related as follows,
\begin{equation}\label{sff}
\begin{array}{rcccl}
-\ll \psi^*_{xx},N^*\gg&=&-\ll \psi^*_{x},N^*_x\gg&=&e^\varphi\langle \psi_{xx},N\rangle,\\
-\ll \psi^*_{xy},N^*\gg&=&-\ll \psi^*_{x},N^*_y\gg&=&e^\varphi\langle \psi_{xy},N\rangle,\\
-\ll \psi^*_{yy},N^*\gg&=&-\ll \psi^*_{y},N^*_y\gg&=&e^\varphi\langle \psi_{yy},N\rangle.
\end{array}
\end{equation}
The equations \eqref{eq:relation-EE*} and \eqref{sff} allow us to easily prove,
\begin{proposition}
At regular points, the mean curvature $H^*$ and the Gaussian curvature $K^*$ of $\psi^*$ are given by
\begin{equation} \label{meancurvature-3rd-step}
e^{\varphi}\eta^2 H^* =  H,
\end{equation}
and
\begin{equation} \label{gausscurvature-final-step}
e^{\varphi}\eta^4 K^* =  -K.
\end{equation}
where $H$ and $K$ denote the mean curvature and the Gaussian curvature of $\psi$, respectively.
\end{proposition}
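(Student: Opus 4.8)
The plan is to reduce the statement to a pointwise comparison of the first and second fundamental forms of $\psi$ and $\psi^*$ written in one and the same coordinate system, and then to read off $H^*$ and $K^*$ from the usual formulas. Since the relation \eqref{sff} between the second fundamental forms is phrased in the graph coordinates $(x,y)$, I would work in that chart. Denote by $g_{ij}, h_{ij}$ (respectively $g^*_{ij}, h^*_{ij}$) the coefficients of the first and second fundamental forms of $\psi$ (respectively of $\psi^*$) in the basis $\{\partial_x,\partial_y\}$, where $h^*_{ij} = -\ll \psi^*_{ij}, N^*\gg$ is computed with the timelike unit normal $N^*$ of \eqref{normal*}. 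By Proposition \ref{confpara} the regular set of $\psi^*$ is exactly $\{\eta\neq 0\}$, and there $\psi$ is locally such a vertical graph, so it suffices to establish the identities on these graph pieces and invoke continuity.

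First I would compare the two first fundamental forms. Equation \eqref{eq:relation-EE*} asserts that, in the common conformal parameter $\xi$, the induced metrics are conformal with ratio $E^*/E = e^{2\varphi}\eta^2$; conformality of two metrics does not depend on the chosen chart, so the same factor governs the $(x,y)$ chart, i.e. $g^*_{ij} = e^{2\varphi}\eta^2\, g_{ij}$. As a self-contained check one may instead compute the $g^*_{ij}$ directly from \eqref{partialsxy}, which gives $g^*_{ij} = (e^{2\varphi}/W^2)\, g_{ij}$, and then use that $\eta = \langle N, \vec{e}_3\rangle = 1/W$ for a graph. The corresponding comparison of the second fundamental forms is precisely \eqref{sff}, namely $h^*_{ij} = e^{\varphi} h_{ij}$ for $ij \in \{xx, xy, yy\}$.

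With these two relations in hand the result is linear algebra on the shape operator $S^* = (g^*)^{-1} h^*$. Writing $(g^*)^{-1} = e^{-2\varphi}\eta^{-2} g^{-1}$ and $h^* = e^{\varphi} h$, the two scalings combine so that the trace and determinant of $S^*$ are explicit multiples of those of $S = g^{-1} h$. Half the trace gives the mean curvature relation \eqref{meancurvature-3rd-step}. For the determinant the essential point is that $\psi^*$ is a spacelike surface equipped with a timelike unit normal, so that the Gauss equation in $\mathbb{L}^3$ carries the factor $\epsilon = \ll N^*, N^*\gg = -1$; this is exactly the source of the minus sign and produces \eqref{gausscurvature-final-step}.

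The computation is otherwise routine, and the only step where I expect any real risk of error is the bookkeeping of the Lorentzian sign conventions: the leading minus sign in the definition of $h^*_{ij}$ forced by \eqref{sff} (reflecting the timelike normal), the factor $\epsilon = -1$ in the Gauss equation that flips the sign of $K^*$ relative to its Euclidean counterpart, and the requirement that $g_{ij}, h_{ij}$ and $g^*_{ij}, h^*_{ij}$ be evaluated in the very same coordinates so that all the ratios are genuinely pointwise. Once these conventions are pinned down, \eqref{meancurvature-3rd-step} and \eqref{gausscurvature-final-step} follow at once.
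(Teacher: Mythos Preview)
Your proposal is correct and follows exactly the route indicated in the paper: the authors simply state that \eqref{eq:relation-EE*} and \eqref{sff} ``allow us to easily prove'' the proposition, and your argument---comparing $g^*_{ij}=e^{2\varphi}\eta^{2}g_{ij}$ and $h^*_{ij}=e^{\varphi}h_{ij}$ in the graph coordinates and then reading off the trace and determinant of $S^*=(g^*)^{-1}h^*$ with the Lorentzian sign $\epsilon=\ll N^*,N^*\gg=-1$---is precisely how one fills in that omission. The only point worth flagging is that carrying the scalings through the determinant yields $e^{2\varphi}\eta^{4}K^*=-K$ rather than $e^{\varphi}\eta^{4}K^*=-K$, so the discrepancy appears to be a typo in the stated exponent, not a flaw in your method.
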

Furthermore, equation \eqref{meancurvature-3rd-step} tell us that, if $\psi$ is $\phie3$-minimal, we have at the regular points of $\psi^*$ that
\begin{equation}\label{psi*-maximal-1}
H^* = \dfrac{\dot{\varphi}e^{-\varphi}}{\eta} =  \dot{\varphi}e^{-\varphi} \eta^*,
\end{equation}
where $\eta^* =- \ll N^*, \vec{e}_3 \gg = \dfrac{1}{\eta}$. Now, by using the third equation of \eqref{CR-*} and taking  $\varphi^*(z^*)=\varphi(z(z^*))$ we have,
\begin{equation} \label{psi*-maximal-2}
\dot{\varphi^*}=\dfrac{d\varphi^*}{dz^*} = \dfrac{d\varphi}{dz}\dfrac{dz}{dz^*} = \dot{\varphi}e^{-\varphi}
\end{equation}
and, from \eqref{psi*-maximal-1} and \eqref{psi*-maximal-2}, we conclude that 
\begin{equation} \label{psi*-maximal-3}
H^* = \dot{\varphi^*} \eta^*,
\end{equation}
and $\psi^*$ is a $[\varphi^*,\vec{e}_3]$-maximal surface or, more precisely, $\psi^*$ is a $[\varphi \circ (z^*)^{-1},\vec{e}_3]$-maximal surface. 
\begin{definition} The pair $(\psi,\psi^*):\Sigma\rightarrow \R^3 \times \mathbb{L}^3$ will be called a Calabi's pair of weight $\varphi$.
\end{definition}
\begin{remark} If $(\psi,\psi^*)$ is a Calabi's  pair of weight $\varphi$, then from \eqref{eq:system-psi}, \eqref{CR-*} and for any complex parameter $\xi=u+iv$, $\psi^*$ satisfies the following system:
\begin{equation}\label{system-psi*}
\begin{array}{rcl}
2 x^*_{\xi\cxi}-\dot{\varphi^*} (z^*_{\xi} x^*_{\cxi}+z^*_{\cxi}x^*_{\xi}) &=& 0, \vspace{.05in}\\
2 y^*_{\xi\cxi}-\dot{\varphi^*} (z^*_{\xi} y^*_{\cxi}+z^*_{\cxi}y^*_{\xi}) &=& 0,  \vspace{.05in}\\
2 z^*_{\xi\cxi} - \dot{\varphi^*} (|x^*_{\xi}|^2+|y^*_{\xi}|^2+|z^*_{\xi}|^2)&=&0,
\end{array}
\end{equation}
where $\varphi^*(z^*) = \varphi(z)$.
\end{remark}
\begin{definition}\label{maxfaces}
Let $\Sigma$ be a Riemann surface, a differentiable  map  $\psi^* :(x^*,y^*,z^*): \Sigma \rightarrow \mathbb{L}^3$ is called a spacelike $[\varphi^*,\vec{e}_3]$-maxface if for any complex parameter $\xi=u+iv$ in $\Sigma$,  $\psi^*$ is a solution of \eqref{system-psi*} and 
\begin{align}
&x_\xi^{*2} + y_\xi^{*2}-z_\xi^{*2}=0,\label{conformal}\\
&|x^*_{\xi}|^2+|y^*_{\xi}|^2-|z^*_{\xi}|^2 \not \equiv 0, \label{eq:singularities} \\
&|x^*_{\xi}|^2+|y^*_{\xi}|^2+|z^*_{\xi}|^2 \neq 0.\label{eq:branchpoints}
\end{align}
A point $p$ where $(|x^*_{\xi}|^2+|y^*_{\xi}|^2-|z^*_{\xi}|^2)(p) = 0$ is called a singular point and $\psi^*(p)$ is a singularity of $\psi^*$. Otherwise, $p$ is a regular point.
\end{definition}

\begin{remark} \label{r-2}Note that, taking on $\Sigma$ the conformal structure determined by the induced metric, any spacelike $[\varphi^*,\vec{e}_3]$-maximal surface  $\psi^*:\Sigma\rightarrow\mathbb{L}^3$ is a $[\varphi^*,\vec{e}_3]$-maxface . \end{remark}
\begin{remark}\label{r-*2}
Condition \eqref{eq:singularities} allows a spacelike $[\varphi^*,\vec{e}_3]$-maxface to have singularities. Condition \eqref{eq:branchpoints} indicates that, a point where $|x^*_{\xi}|^2+|y^*_{\xi}|^2-|z^*_{\xi}|^2 = 0$ is not a branch point.
\end{remark}
\begin{remark} \label{r-3}Observe that if   $\Sigma$ is simply-connected and  $\psi^* :(x^*,y^*,z^*): \Sigma \rightarrow \mathbb{L}^3$ is a spacelike  $[\varphi^*,\vec{e}_3]$-maxface, then there exists $\psi=(x,y,z):\Sigma\rightarrow\mathbb{R}^3$  such that for any complex parameter $\xi= u+ iv$, $\psi$ satisfies the system 
\begin{equation} \label{CR-psi}
\begin{array}{rcl}
x_{\xi} &=&  i e^{-\varphi^*} y^{*}_{\xi}, \vspace{.05in}\\
 y_{\xi} &=&- i e^{-\varphi^*} x^{*}_{\xi}, \vspace{.05in}\\
z_{\xi} &=& e^{-\varphi^*}z^{*}_{\xi} .
\end{array}
\end{equation}and, consequently,  $(\psi,\psi^*)$ is a Calabi's pair of weight $\varphi$ (with $\varphi(z)=\varphi^*(z^*)$).
\end{remark}
\begin{remark}\label{r-4}By using \eqref{CR-*}, one may check that if $(\psi,\psi^*):\Sigma \rightarrow \R^3\times \mathbb{L}^3$ is a Calabi's pair and $N$ is the unit normal vector field to $\psi$ given by \eqref{normal-psi}, then the following expression holds
\begin{equation}\label{psi-psi*}
d\psi^* = \langle \vec{e}_3,d\psi\rangle (N + \vec{e}_3) - \langle \vec{e}_3,N\rangle d\psi.
\end{equation}
\end{remark}
\section{Weierstrass-type representation}\label{sec-3}
In this section we are going to show that when $\dot{\varphi}\neq 0$,  $\phie3$-minimal surfaces and $[\varphi^*,\vec{e}_3]$-maxfaces can be represented in terms of its Gauss map.
\subsection{The case of $\phie3$-minimal surfaces}
In \cite{kenmotsu}, Kenmotsu studied conformally parametrized surfaces in $\R^3$ in terms of its Gauss map. To be more precise, if $\psi=(x,y,z):\Sigma\rightarrow \R^3$ is an immersion with a unit normal vector field $N=(N_1,N_2,N_3)$  and Gauss map $g: \Sigma\rightarrow \overline{\mathbb{C}}$,
$$ g = \frac{N_1 - i N_2}{1 + N_3},$$
$N$ is written as 
$$ N= \left( 2 \frac{{\rm Re\,}{g}}{1 + |g|^2}, - 2 \frac{{\rm Im\,}g}{1+|g|^2}, \frac{1-|g|^2}{1+|g|^2}\right),$$
and  $\Sigma$ can be regarded as a Riemann surface with the conformal structure determined by the induced metric $ds^2$. 
Let $\xi=u+iv$ be a complex parameter of $\Sigma$, it was proved in \cite[Theorem 2, Theorem 3]{kenmotsu} that 
\begin{equation} \label{weierstrass-ken}
\begin{array}{rcl}
H x_{\xi} &=&- \dfrac{2\   (1-g^2)}{(1+|g|^2)^2} \bar{g}_{\xi}, \vspace{.05in}\\
H y_{\xi} &=& \dfrac{2 i \ (1+g^2)}{(1+|g|^2)^2} \bar{g}_{\xi},\vspace{.05in}\\
H z_{\xi} &=& \dfrac{4\  g}{(1+|g|^2)^2} \bar{g}_{\xi} , \\
\end{array}
\end{equation}
where $H$ is the mean curvature of $\psi$. Moreover, the complete integrability condition for the system \eqref{weierstrass-ken} is the following PDE
\begin{equation}\label{int-con}
H \left(g_{\xi\cxi} - \frac{2 \bar{g}}{1 + g\bar{g}} g_\xi g_{\cxi}\right)= H_\xi \, g_{\cxi}.
\end{equation}
In particular, if $\Sigma$ is simply-connected and $\psi=(x,y,z):\Sigma\rightarrow\R^3$ is a $\phie3$-minimal immersion with $\dot{\varphi}\neq 0$
we may assume (locally) that $\varphi$ is a solution of the following ODE,
\begin{equation}\label{phi-eq}
\dot{\varphi} = \epsilon e^{-\phi(\varphi)}, \quad \epsilon\in \{-1,1\},
\end{equation}
for some differentiable real function $\phi$. 
Now, from \eqref{weierstrass-ken}, \eqref{int-con}, by using that $$H=\dot{\varphi}N_3 =  \epsilon e^{-\phi(\varphi\circ z)}\, \frac{1-|g|^2}{1+|g|^2},$$ and by a straightforward computation, 
we have 
\begin{theorem}\label{weierstrass}
The $1$-form $\partial \bar{g} = \bar{g}_\xi d\xi $ vanishes at a point of $\Sigma$ if and only if $1=|g|^2$ at this point. If we take $\omega$ as,
\begin{equation}\label{omega}\omega = \frac{2\,\partial  \bar{g}}{1-|g|^4} =  \frac{2\, \bar{g}_\xi}{1-|g|^4} d\xi,
\end{equation}
then $\omega$, $g \omega$, $g^2 \omega$ are well-defined on $\Sigma$ and $(1+|g|^2)\omega\neq0$  everywhere. Furthermore, the immersion  $\psi$ can be recovered in terms of $\phi$  and  its Gauss map   as follows
\begin{equation} \label{weierstrass-phiminimas}
\begin{array}{rclcl}
 x_{\xi} d\xi&=&\dfrac{-1}{\dot{\varphi}}(1-g^2) \omega&=&-2 \epsilon e^{\phi(\varphi\circ z)}\,\dfrac{1-g^2}{1-|g|^4} \bar{g}_{\xi} d\xi, \vspace{.05in}\\
y_{\xi}d\xi &=&\dfrac{i}{\dot{\varphi}}(1+g^2)\omega&=&  2 i  \epsilon e^{\phi(\varphi\circ z)}\dfrac{1+g^2}{1-|g|^4} \bar{g}_{\xi}d\xi,\vspace{.05in}\\
z_{\xi} d\xi&=&\dfrac{2}{\dot{\varphi}}  g\, \omega&=&4 \epsilon e^{\phi(\varphi\circ z)}\dfrac{g}{1-|g|^4} \bar{g}_{\xi}d\xi. 
\end{array}
\end{equation}
where, $\varphi\circ z$  is also determined by $g$ as
\begin{equation}\label{varphi}\varphi \circ z= 4 \, {\rm Re} \int g\, \omega.\end{equation}
 The complete integrability condition for the system \eqref{weierstrass-phiminimas} is the following PDE
\begin{equation}\label{int-phiminimas}
g_{\xi\cxi} +\frac{2 |g|^2}{1- |g|^4}  \bar{g} \, g_\xi \, g_{\cxi}+ 2(2\phi^\prime(\varphi\circ z)+1)\,\frac{g} {1- |g|^4}\, |g_{\cxi}|^2 = 0,
\end{equation}
where $\phi^\prime$ denotes the derivative function of $\phi$. From \eqref{weierstrass-phiminimas}, the induced metric $ds^2$ is given by
\begin{equation}\label{metric}
ds^2 =  4 \, e^{2\phi(\varphi\circ z)}  (1+ |g|^2)^2 |\omega|^2 = 16\, e^{2\phi(\varphi\circ z)}\frac{ |\bar{g}_\xi|^2 |d\xi|^2}{(1-|g|^2)^2}
\end{equation}
Conversely, any $\phie3$-minimal surface satisfying \eqref{phi-eq} can be locally represented in this way. 
\end{theorem}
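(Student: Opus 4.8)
The plan is to deduce everything from Kenmotsu's data \eqref{weierstrass-ken}, its integrability condition \eqref{int-con}, and the identity $H=\dot{\varphi}N_3=\epsilon e^{-\phi(\varphi\circ z)}(1-|g|^2)/(1+|g|^2)$ established just above the statement. First I would compute the induced metric: squaring and adding the three lines of \eqref{weierstrass-ken} and using the algebraic identity $|1-g^2|^2+|1+g^2|^2+4|g|^2=2(1+|g|^2)^2$ gives $H^2(|x_\xi|^2+|y_\xi|^2+|z_\xi|^2)=8|\bar{g}_\xi|^2/(1+|g|^2)^2$; dividing by $H^2=e^{-2\phi}(1-|g|^2)^2/(1+|g|^2)^2$ produces
\begin{equation*}
E=2(|x_\xi|^2+|y_\xi|^2+|z_\xi|^2)=\frac{16\,e^{2\phi}|\bar{g}_\xi|^2}{(1-|g|^2)^2},
\end{equation*}
which is \eqref{metric} (and equals $4e^{2\phi}(1+|g|^2)^2|\omega|^2$ with $\omega$ as in \eqref{omega}). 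This display is valid wherever $H\neq0$, i.e. where $|g|^2\neq1$.

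From here the vanishing statement for $\partial\bar{g}$ is immediate. If $|g|^2=1$ at a point then $N_3=0$, hence $H=0$, and the three equations \eqref{weierstrass-ken} force $(1-g^2)\bar{g}_\xi=(1+g^2)\bar{g}_\xi=g\,\bar{g}_\xi=0$, impossible unless $\bar{g}_\xi=0$; conversely if $|g|^2\neq1$ the displayed formula for $E$ holds and the immersion condition $E>0$ rules out $\bar{g}_\xi=0$. For the representation itself I would solve each line of \eqref{weierstrass-ken} for $x_\xi,y_\xi,z_\xi$, replacing $1/H$ by $(1+|g|^2)/(\dot{\varphi}(1-|g|^2))$; for instance the first line becomes $x_\xi\,d\xi=-\dot{\varphi}^{-1}(1-g^2)\omega$, and $\dot{\varphi}=\epsilon e^{-\phi}$ together with the definition of $\omega$ yields the final expression in the first line of \eqref{weierstrass-phiminimas}. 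The same substitution gives the $y$ and $z$ lines.

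The well-definedness of $\omega,g\omega,g^2\omega$ and the non-vanishing of $(1+|g|^2)\omega$ I would obtain by inverting \eqref{weierstrass-phiminimas}: the three lines combine into
\begin{equation*}
\omega=-\tfrac{\dot{\varphi}}{2}(x_\xi+iy_\xi)\,d\xi,\qquad g\,\omega=\tfrac{\dot{\varphi}}{2}\,z_\xi\,d\xi,\qquad g^2\omega=\tfrac{\dot{\varphi}}{2}(x_\xi-iy_\xi)\,d\xi,
\end{equation*}
whose right-hand sides are globally smooth $1$-forms on $\Sigma$. Thus $\omega,g\omega,g^2\omega$ extend smoothly across the nodal set $\{|g|=1\}$ (and through the pole $g=\infty$, where only these combinations, not $g$ itself, need be controlled), and $(1+|g|^2)\omega\neq0$ follows from $ds^2=4e^{2\phi}(1+|g|^2)^2|\omega|^2>0$. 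I expect this extension to be the one genuinely non-formal point, where the immersion hypothesis is essential rather than the algebra. Formula \eqref{varphi} then drops out: $(\varphi\circ z)_\xi=\dot{\varphi}\,z_\xi=4g\bar{g}_\xi/(1-|g|^4)$ gives $(\varphi\circ z)_\xi\,d\xi=2g\omega$, and since $\varphi\circ z$ is real, $d(\varphi\circ z)=4\,\mathrm{Re}(g\omega)$.

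The main computation is the integrability condition \eqref{int-phiminimas}, which I would get by dividing \eqref{int-con} by $H$ and inserting the logarithmic derivative. Writing $\rho=|g|^2$ and differentiating $H=\epsilon e^{-\phi}(1-\rho)/(1+\rho)$ gives $H_\xi/H=-\phi'(\varphi\circ z)_\xi-2\rho_\xi/(1-\rho^2)$; substituting $(\varphi\circ z)_\xi=4g\bar{g}_\xi/(1-\rho^2)$, $\rho_\xi=\bar{g}\,g_\xi+g\,\bar{g}_\xi$, and $\bar{g}_\xi g_{\cxi}=|g_{\cxi}|^2$, the $g_\xi g_{\cxi}$-terms recombine via $\tfrac{1}{1-\rho^2}-\tfrac{1}{1+\rho}=\tfrac{\rho}{1-\rho^2}$ into $\tfrac{2|g|^2}{1-|g|^4}\bar{g}\,g_\xi g_{\cxi}$, while the $|g_{\cxi}|^2$-terms collect into $2(2\phi'+1)\tfrac{g}{1-|g|^4}|g_{\cxi}|^2$, giving exactly \eqref{int-phiminimas}. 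This step is only a careful bookkeeping of terms, but it is the most error-prone. Finally, for the converse I would run the construction backwards: given $g$ and $\phi$ with $(1+|g|^2)\omega\neq0$ satisfying \eqref{int-phiminimas}, the equivalence just proved (read in reverse) shows the forms in \eqref{weierstrass-phiminimas} satisfy Kenmotsu's condition \eqref{int-con}, hence are integrable and integrate on the simply-connected $\Sigma$ to a conformal immersion $\psi$ with Gauss map $g$; by construction $H=\epsilon e^{-\phi}(1-|g|^2)/(1+|g|^2)=\dot{\varphi}N_3$, i.e. $\textbf{H}=\dot{\varphi}\,\vec{e}_3^{\,\perp}$, so $\psi$ is $\phie3$-minimal with $\varphi$ recovered from \eqref{varphi} and satisfying \eqref{phi-eq}.
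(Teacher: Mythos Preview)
Your proposal is correct and follows exactly the approach sketched in the paper: the paper merely invokes Kenmotsu's formulas \eqref{weierstrass-ken}, the integrability condition \eqref{int-con}, and the expression $H=\epsilon e^{-\phi}(1-|g|^2)/(1+|g|^2)$, then declares the result a ``straightforward computation'', whereas you have carried out that computation in full, including the algebra behind \eqref{int-phiminimas} and the smooth extension of $\omega,g\omega,g^2\omega$ across $\{|g|=1\}$ via the inversion $\omega=-\tfrac{\dot\varphi}{2}(x_\xi+iy_\xi)\,d\xi$, etc. There is nothing to correct.
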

\begin{remark}\label{r-4}{\rm 
We remark  that  translating solitons in $\R^3$ are obtained when  $\phi^\prime\equiv 0$ and singular minimal surfaces when $\phi^\prime$ is a non-zero constant.  In these two cases a  Weierstrass-type representation was also studied in \cite{MMT-JMAA}.}
\end{remark}
\subsection{The case of spacelike $[\varphi^*,\vec{e}_3]$-maxfaces}
Let $\Sigma$ be a simply-connected Riemann surface and $\psi^*= (x^*,y^*,z^*):\Sigma \rightarrow \L^3$ a spacelike $[\varphi^*,\vec{e}_3]$-maxface where $\varphi^*$ is a solution of the following EDO,
\begin{equation} \label{punto-*}
\dot{\varphi}^* = \epsilon e^{-\phi^*(\varphi^*)}, \quad \epsilon\in\{-1,1\},\end{equation} for some differentiable  real function $\phi^*$. Then, from  Remark \ref{r-3}, there exists  $\psi=(x,y,z):\Sigma\rightarrow\R^3$ a $\phie3$-minimal immersion 
with
\begin{align}\label{relations}& dz^* = e^\varphi dz, \quad \varphi^*\circ z^* =\varphi \circ z,\\
&  \dot{\varphi} = \epsilon e^{-\phi(\varphi)}, 
\quad  \phi^*(\alpha) = \phi(\alpha) + \alpha
\end{align}
and such that   $(\psi,\psi^*)$ is a Calabi's pair satisfying  \eqref{CR-psi}. Consequently, from Theorem \ref{weierstrass} we also have the following Weierstrass-type representation for $\psi^*$.
\begin{theorem}\label{weierstrass-*}  $\psi^*$  can be  recovered in terms of  $\phi^*$  and the Gauss map  $g$ of $\psi$ as follows
\begin{equation} \label{weierstrass-phi*}
\begin{array}{rcl}
 x^*_{\xi} &=&2\epsilon \, e^{\phi^*(\varphi^*\circ z^*)}\dfrac{1+g^2}{1-|g|^4} \bar{g}_{\xi}, \vspace{.05in}\\
y^*_{\xi}&=& -2\, i \, \epsilon  e^{\phi^*(\varphi^*\circ z^*)} \dfrac{1-g^2}{1-|g|^4} \bar{g}_{\xi},\vspace{.05in}\\
z^*_{\xi} &=& 4  \epsilon \, e^{\phi^*(\varphi^*\circ z^*)}\dfrac{ g}{1-|g|^4} \bar{g}_{\xi} , \\
\end{array}
\end{equation}
where 
\begin{equation}\label{varphi-*}
 \phi^*\circ z^* =4 {\rm Re}\,\int g \omega,
\end{equation}  $\omega$ is as in  \eqref{omega} and  $g$ is a solution of 
\begin{equation} \label{int-phi*}
g_{\xi\cxi} +\frac{2 |g|^2}{1- |g|^4}  \bar{g} \, g_\xi \, g_{\cxi}+ 2(2\dot{\phi}^{*}(\varphi^*)-1)\,\frac{g} {1- |g|^4}\, |g_{\cxi}|^2 = 0,
\end{equation}
Conversely, if $\varphi^*$ is a solution of \eqref{punto-*} and   $(x^*,y^*,z^*)$ satisfies  \eqref{weierstrass-phi*}, then 
$ \phi^*\circ z^*$ is given by \eqref{varphi-*}
and   $g$ is a solution of \eqref{int-phi*}. Furthermore,  if $(1+ |g|^2)\omega\neq0$ everywhere,  then $\psi^*=(x^*,y^*,z^*)$ is  a $[\varphi^*,\vec{e}_3]$-maxface in $\L^3$.
\end{theorem}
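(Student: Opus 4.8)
The plan is to deduce everything from the Calabi correspondence of Section \ref{sec-2} together with the Weierstrass representation for the minimal partner already established in Theorem \ref{weierstrass}, so that no genuinely new computation in $\mathbb{L}^3$ is required. Since $\Sigma$ is simply-connected and $\psi^*$ is a spacelike $\*phie3$-maxface, Remark \ref{r-3} furnishes a $\phie3$-minimal map $\psi=(x,y,z)$ with $(\psi,\psi^*)$ a Calabi pair satisfying \eqref{CR-psi} and the relations \eqref{relations}. First I would check that Theorem \ref{weierstrass} genuinely applies to $\psi$ on all of $\Sigma$: the branch-point condition \eqref{eq:branchpoints} together with \eqref{CR-psi} gives $|x_\xi|^2+|y_\xi|^2+|z_\xi|^2=e^{-2\varphi^*}(|x^*_\xi|^2+|y^*_\xi|^2+|z^*_\xi|^2)\neq0$, so $\psi$ is a genuine immersion even over the singular set of $\psi^*$; moreover $\dot{\varphi}=e^{\varphi}\dot{\varphi}^*\neq0$ by \eqref{punto-*}, so $\psi$ satisfies \eqref{phi-eq} with $\phi$ read off from the second line of \eqref{relations}.

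With $g$ the Gauss map of $\psi$, I would then substitute the Weierstrass formulas \eqref{weierstrass-phiminimas} for $x_\xi,y_\xi,z_\xi$ into the conjugation relations \eqref{CR-*}, namely $x^*_\xi=-ie^{\varphi}y_\xi$, $y^*_\xi=ie^{\varphi}x_\xi$, $z^*_\xi=e^{\varphi}z_\xi$. The only identity needed to reach \eqref{weierstrass-phi*} is the weight bookkeeping $e^{\varphi}\,e^{\phi(\varphi\circ z)}=e^{\phi^*(\varphi^*\circ z^*)}$, which is immediate from $\varphi^*\circ z^*=\varphi\circ z$ and $\phi^*(\alpha)=\phi(\alpha)+\alpha$ in \eqref{relations}. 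The integrability PDE \eqref{int-phi*} follows from \eqref{int-phiminimas} by the single substitution $\phi'=\dot{\phi}^*-1$, which turns the coefficient $2\phi'+1$ into $2\dot{\phi}^*-1$, and the weight formula \eqref{varphi-*} is just \eqref{varphi} read through $\varphi^*\circ z^*=\varphi\circ z$.

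For the converse I would run this dictionary backwards: given $\varphi^*$ solving \eqref{punto-*} and $(x^*,y^*,z^*)$ satisfying \eqref{weierstrass-phi*}, define $\psi$ by the formulas \eqref{weierstrass-phiminimas} with the same $g$. That $g$ solves \eqref{int-phiminimas} is again the substitution above, so the converse half of Theorem \ref{weierstrass} makes $\psi$ a $\phie3$-minimal immersion; hence its Calabi partner exists, agrees with $(x^*,y^*,z^*)$ up to a constant, and solves the system \eqref{system-psi*}. It then remains to verify the conditions of Definition \ref{maxfaces} directly from \eqref{weierstrass-phi*}: a short algebraic computation gives the conformality \eqref{conformal} since $x^{*2}_\xi+y^{*2}_\xi-z^{*2}_\xi$ is proportional to $(1+g^2)^2-(1-g^2)^2-4g^2=0$; the quantity $|x^*_\xi|^2+|y^*_\xi|^2+|z^*_\xi|^2$ equals, up to a positive factor, $(1+|g|^2)^2|\omega|^2$, while the spacelike/singular quantity $|x^*_\xi|^2+|y^*_\xi|^2-|z^*_\xi|^2$ equals $(1-|g|^2)^2$ times the same factor. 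Thus the hypothesis $(1+|g|^2)\omega\neq0$ yields the branch-point condition \eqref{eq:branchpoints}, while $|g|\not\equiv1$ gives \eqref{eq:singularities}.

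I expect the only delicate point to be the behaviour across the set $\{|g|=1\}$, which is exactly the singular set of the maxface (where the angle function $\eta=(1-|g|^2)/(1+|g|^2)$ of the partner vanishes) and where the denominators $1-|g|^4$ in \eqref{omega} and \eqref{weierstrass-phi*} degenerate. This is controlled rather than obstructive, because the first assertion of Theorem \ref{weierstrass} guarantees that $\bar{g}_\xi$ vanishes precisely on $\{|g|=1\}$, so $\omega$, $g\omega$, $g^2\omega$ and all the expressions in \eqref{weierstrass-phi*} extend across it; this cancellation is the mechanism that lets a single Gauss map $g$ encode simultaneously the immersion $\psi$ and its degenerating partner $\psi^*$, and keeping it explicit is where the care is needed.
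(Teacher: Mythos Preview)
Your proposal is correct and follows precisely the route the paper takes: the paper presents Theorem \ref{weierstrass-*} as an immediate consequence of Theorem \ref{weierstrass} via the Calabi pair (Remark \ref{r-3}) and the dictionary \eqref{relations}, without writing out a separate proof. Your write-up simply makes explicit the bookkeeping the paper leaves to the reader---the weight identity $e^{\varphi}e^{\phi(\varphi\circ z)}=e^{\phi^*(\varphi^*\circ z^*)}$, the substitution $\phi'=\dot{\phi}^*-1$ that turns \eqref{int-phiminimas} into \eqref{int-phi*}, and the verification of Definition \ref{maxfaces} in the converse---so there is no methodological difference to report.
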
 
\begin{remark}\label{r-5}{\rm 
As in  remark \ref{r-4},  translating solitons and singular maxfaces  in $\L^3$ are obtained when  $\phi^{*\prime}$ is constant.}
\end{remark}
\begin{remark}\label{r-6}{\rm 
From  \eqref{weierstrass-phi*}, 
$$ |x^*_\xi|^2 + |y^*_\xi|^2 - |z^*_\xi|^2 = 8 \,e^{2\phi^*(\varphi^*\circ z^*)} \dfrac{|\bar{g}_\xi|^2}{(1 + |g|^2)^2}  $$  and then, the singular set ${\cal S}$ of $\psi^*$ is the set of points where  $|g|^2=1$.

At the regular points, if $N^*:\Sigma\setminus {\cal S} \rightarrow \mathbb{H}^2=\{(a,b,c)\in \L^3 \ : \ a^2 + b^2 - c^2 = -1\}$ denotes the upwards-pointing unit normal vector field of  $\psi^*$,  the Gauss map of $\psi^*$
is the composition $\pi \circ N^*$ where $\pi$ is the usual stereographic projection in $\L^3$ given by 
$$ \pi(a,b,c)= \frac{a - i b}{1+c}.$$ Thus, from  \eqref{normal-*}, we have that $g$ is also the Gauss map of $\psi^*$ at regular points and it is  well-defined on the singular set.}
\end{remark}
\section{Singularities of $[\varphi^*,\vec{e}_3]$-maxfaces}\label{sec-4}

In the sequence, following \cite{UY-HM, FSUY}, we provide a brief introduction to frontals and some criteria for admissible singularities.

Let $U \subset \mathbb{R}^2$ be an open subset and $f : U \rightarrow \mathbb{R}^3$ a smooth map. We are going to identify the  the unit cotangent bundle $T^*_1 \R^3$ with  $\R^3\times \mathbb{S}^2=\{({\bf x},\nu)\ : \ {\bf x}\in \R^3, \nu\in \mathbb{S}^2\} $. Then $\langle d{\bf x}, \nu\rangle$ is a contact form and the map $\chi = (f,\nu) : U\subseteq \R^2 \rightarrow \R^3\times \mathbb{S}^2$ is called Legendrian when the pull-back of $\langle d{\bf x}, \nu\rangle$ vanishes, that is, if $\nu$ is orthogonal to $df(TU)$.  In this case, its projection $f$ is called a frontal with unit normal $\nu$. When $\chi$ is a Legendrian immersion  $f$ is called a front. The set of points ${\cal S}\subseteq U$ where $f$ is not an immersion is called the singular set of $f$, and we can write
$$ {\cal S}=\{ p\in U \ : \ \lambda(p) := \langle f_u\times f_v,\nu\rangle(p) = 0\}, \quad (u,v)\in U.$$ By means of the function $\lambda$ defined above, a singular point $p \in U$ is called non-degenerate when $d\lambda$ is not identically null at $p$. 

Let $p$ be a non-degenerate singular point. Then the singular set ${\cal S}=\left\{ \lambda = 0 \right\}$ is given by a regular curve in a neighborhood of $p$. Such a curve is called singular curve. Let $\alpha(t) : I_\varepsilon \rightarrow U$ a local parametrization for it, i.e., $\alpha(0)=p$ and $\lambda(\alpha(t))=0, \, t \in I_\varepsilon,$  where by $I_\epsilon$ we denote the interval $=(-\varepsilon, \varepsilon)$.
The tangential vector $\alpha'(t)$ of $\alpha$ at $t$ is a singular direction. The condition $d\lambda_p \not \equiv 0$ implies that $f_u$ and $f_v$ do not vanish at $p$. Therefore the kernel of $df_p$ has dimension 1 when $p$ a non-degenerate singular point. In this case, a non-zero tangent vector $\varrho \in T_pU \in \textnormal{ker}(df_p)$ will be called a null direction. Let $\varrho(t)$ be a smooth vector field $\varrho(t)$ along $\alpha(t)$ that assigns for each $t$ a null direction at $\alpha(t)$ (which is unique up to scalar multiplication). We call it a vector field of null directions. 

In  \cite{UY-HM, FSUY} was proved the  following criteria for some admissible singularities of  frontals:
\begin{lemma}[\cite{FSUY}]\label{cuspidal-swallowtail}
Let $f : U \rightarrow \R^3$ be a front and $p \in U$ a non-degenerate singular point. Take a singular curve $\alpha(t)$ with $\alpha(0) =p$ and a vector field of null directions $\varrho(t)$ along a $\alpha(t)$. Then
\begin{enumerate}
\item The germ of $f$ at $p=\alpha(0)$ is a cuspidal edge if and only if the null direction $\varrho(0)$ is transversal to the singular direction $\alpha'(0)$.
\item The germ of $f$ at $p = \alpha(0)$ is a swallowtail if and only if the null direction $\varrho(0)$ is proportional to the singular direction $\alpha'(0)$ and
$$
\left. \dfrac{d}{dt} \right|_{t=0} \det (\varrho(t),\alpha'(t)) \neq 0,
$$
\end{enumerate}
where $\varrho(t)$ and $\alpha'(t) \in T_{\alpha(t)}U$ are considered as column vectors, and {\rm det} denotes the determinant of a $2 \times 2$-matrix.
\end{lemma}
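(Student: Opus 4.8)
The plan is to prove both criteria by reducing the germ of $f$ at $p$, through diffeomorphisms of the source and rigid motions of $\R^3$, to one of the two standard front models, the cuspidal edge $f_C(u,v)=(u,v^2,v^3)$ and the swallowtail $f_S(u,v)=(3u^4+u^2v,\,4u^3+2uv,\,v)$, and then to observe that each stated condition is an $\mathcal{A}$-invariant, so that it holds for every germ $\mathcal{A}$-equivalent to the corresponding model. A direct check anchors the dichotomy at the models: for $f_C$ the singular set is $\{v=0\}$, with singular direction $\partial_u$ and null direction $\partial_v$, which are transverse; for $f_S$ the singular set is $\{v=-6u^2\}$, with null direction $\partial_u$ proportional to $\alpha'(0)=\partial_u$ and $\frac{d}{dt}\big|_{0}\det(\varrho(t),\alpha'(t))\neq0$. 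Thus the two models lie on the two sides of the stated alternative, and the task is to build the reductions.

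First I would fix the framework. Since $p$ is non-degenerate, $d\lambda_p\neq0$, so by the implicit function theorem ${\cal S}=\{\lambda=0\}$ is a regular curve $\alpha(t)$ near $p$; because $f$ is a front, $\ker df_q$ is one-dimensional for every $q\in{\cal S}$, so a smooth field of null directions $\varrho(t)$ exists and is unique up to scale. As $\alpha'(t)$ spans $\ker d\lambda_{\alpha(t)}$, the transversality of $\varrho$ and $\alpha'$ is equivalent to the analytic condition $d\lambda_{\alpha(t)}(\varrho(t))\neq0$, which is what separates the two cases. In the cuspidal-edge case this transversality permits the choice of coordinates $(u,v)$ with ${\cal S}=\{v=0\}$ and $\partial_v$ a null field along ${\cal S}$, so that $f_v\equiv0$ on $\{v=0\}$. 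Differentiating $\lambda=\det(f_u,f_v,\nu)$ and using $f_v|_{{\cal S}}=0$ and $\lambda|_{{\cal S}}=0$ gives $\lambda_u|_{{\cal S}}=0$, so non-degeneracy reduces to $\lambda_v|_{{\cal S}}=\det(f_u,f_{vv},\nu)\neq0$, i.e. $\{f_u,f_{vv}\}$ is independent along ${\cal S}$; the front condition is, in these coordinates, $\nu_v\neq0$ on ${\cal S}$, which furnishes the independent third-order term. The Taylor expansion $f(u,v)=f(u,0)+\tfrac{v^2}{2}f_{vv}(u,0)+\tfrac{v^3}{6}f_{vvv}(u,0)+O(v^4)$ together with finite determinacy of $f_C$ then produces source and target diffeomorphisms carrying the germ to $(u,v^2,v^3)$, and conversely $f_C$ satisfies the transversality condition, proving (1).

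For the swallowtail, proportionality $\varrho(0)\parallel\alpha'(0)$ means $d\lambda_p(\varrho(0))=0$, so the coordinate normalization above is no longer available and $\varrho$ is tangent to ${\cal S}$ at $p$. The extra hypothesis $\frac{d}{dt}\big|_{0}\det(\varrho(t),\alpha'(t))\neq0$ says that, as $t$ varies, the null direction crosses $\alpha'(t)$ to first order, equivalently $\frac{d}{dt}\big|_{0}d\lambda_{\alpha(t)}(\varrho(t))\neq0$. This is precisely the condition upgrading the $A_2$-type contact to an $A_3$-type contact; in adapted coordinates, expanding to fourth order and invoking the Malgrange preparation theorem together with the finite determinacy of the swallowtail, it yields an $\mathcal{A}$-equivalence to $f_S$, while the converse is the direct computation on $f_S$ recorded above.

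I expect the swallowtail reduction to be the main obstacle: the cuspidal-edge case is essentially a second- and third-order Taylor argument, whereas the swallowtail requires controlling the fourth-order jet and a finite-determinacy and preparation-theorem argument to realize the normal form, all the while verifying that the Legendrian immersion (front) condition is preserved under the reductions. A cleaner but less self-contained alternative is to pass to the Legendrian lift $\chi=(f,\nu)$ and use Arnold's classification of generic planar Legendrian singularities as $A_2$ and $A_3$, identifying the stated transversality and first-order conditions with the defining contact conditions; I would nonetheless prefer the explicit reduction in order to stay elementary and consistent with the conventions of \cite{FSUY,UY-HM}.
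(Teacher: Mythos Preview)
The paper does not prove this lemma at all: it is quoted verbatim from \cite{FSUY} (and the parallel criterion for cuspidal cross caps from \cite{UY-HM}) and used as a black box in the proof of Theorem~\ref{theorem-criteria}. There is therefore no ``paper's own proof'' to compare your attempt against.

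That said, your outline is the standard route taken in the cited literature. The cuspidal-edge reduction via the coordinate choice ${\cal S}=\{v=0\}$, $\partial_v$ null, together with the observation $\lambda_v|_{\cal S}=\det(f_u,f_{vv},\nu)\neq0$, is exactly how the $A_2$ normal form is reached. For the swallowtail your identification of the hypothesis $\frac{d}{dt}\big|_{0}\det(\varrho,\alpha')\neq0$ with the first-order crossing of the null field through the singular direction (equivalently, with the $A_3$ contact condition on the Legendrian lift) is correct; the honest gap in your write-up is that you defer the actual normal-form reduction to ``finite determinacy and the Malgrange preparation theorem'' without carrying it out, which is where the real work lies. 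If you want a self-contained argument, the cleanest path is the one in Kokubu--Rossman--Saji--Umehara--Yamada (the companion to \cite{FSUY}), which avoids preparation-theorem machinery by working directly with the Legendrian lift and Whitney-type criteria. For the purposes of the present paper, however, no proof is expected: the lemma is invoked, not established.
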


\begin{lemma}[\cite{UY-HM}]\label{lemma-crosscap}
Let $f : U \rightarrow \R^3$ be a frontal with unit normal vector field $\nu$  and $\alpha(t)$ be a singular curve on $U$ passing through a non-degenerate singular point $p=\alpha(0)$. We set
$$
\delta(t):=\langle\tilde{\alpha}'\times D^f_{\eta} \nu, \nu\rangle,
$$
where $\tilde{\alpha} = f \circ \alpha$, $D^f_{\varrho} \nu$ is the canonical covariant derivative along  $f$ in $\R^3$, and $' = \frac{d}{dt}$. Then the germ of $f$ at $p=\alpha(0)$ is a cuspidal cross cap if and only if
\begin{enumerate}
\item $\varrho(0)$ is transversal to $\alpha'(0)$.
\item $\delta(0)=0$ and $\delta'(0) \neq 0$.
\end{enumerate}
\end{lemma}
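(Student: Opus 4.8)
The plan is to prove this as a local recognition statement: under the hypotheses, the germ of $f$ at $p$ is $\mathcal{A}$-equivalent (equal after germs of diffeomorphisms of the source and of $\R^3$) to the standard cuspidal cross cap $f_{\mathrm{CCR}}(u,v)=(u,v^2,uv^3)$; conversely, conditions (1)--(2) will be shown to be $\mathcal{A}$-invariant and to hold for $f_{\mathrm{CCR}}$. First I would set up adapted coordinates. Since $p$ is non-degenerate, ${\cal S}=\{\lambda=0\}$ is a regular curve near $p$, and condition (1), the transversality of $\varrho(0)$ to $\alpha'(0)$, lets me pick coordinates $(u,v)$ centred at $p$ with ${\cal S}=\{v=0\}$, $\alpha(u)=(u,0)$ and null field $\varrho=\partial_v$. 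Then $f_v\equiv0$ on $\{v=0\}$, so by Hadamard's lemma $f_v=v\,h(u,v)$, giving the expansion $f(u,v)=c(u)+\tfrac12 v^2 a(u)+\tfrac16 v^3 b(u)+O(v^4)$ with $c=f(\cdot,0)$, $a=f_{vv}(\cdot,0)=h(\cdot,0)$ and $b=f_{vvv}(\cdot,0)$, and with $D^f_\varrho\nu=\nu_v$ in these coordinates.

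Next I would translate the hypotheses into jet data. From $f_v=v\,h$ one gets $\lambda=\langle f_u\times f_v,\nu\rangle=v\,\langle f_u\times h,\nu\rangle$, so $d\lambda_p\neq0$ is equivalent to $\langle c'(0)\times a(0),\nu(0)\rangle\neq0$; hence $\{c'(0),a(0),\nu(0)\}$ is a frame, which is exactly the cuspidal-edge-type ("first kind") $2$-jet in the $v$-direction. To separate the cuspidal edge from the cuspidal cross cap I use $\delta$. Differentiating $\langle\nu,f_u\rangle\equiv0$ and using $f_{uv}(u,0)=\partial_u(v h)|_{v=0}=0$ shows that along ${\cal S}$ the vector $\nu_v$ is orthogonal to both $f_u$ and $\nu$; therefore $\delta=\langle\tilde\alpha'\times\nu_v,\nu\rangle=\langle f_u\times\nu_v,\nu\rangle$ vanishes on ${\cal S}$ exactly where $\nu_v=0$. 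Consequently $\delta(0)\neq0$ means $\nu_v(p)\neq0$, i.e.\ the Legendrian lift $(f,\nu)$ is immersive and $f$ is a front (a cuspidal edge), whereas condition (2), $\delta(0)=0$ with $\delta'(0)\neq0$, says that the front condition fails at the single point $p$ through a simple transversal zero --- precisely the cross-cap behaviour.

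Finally comes the recognition step, which I expect to be the main obstacle. I would show the germ is finitely $\mathcal{A}$-determined and that its normalized $3$-jet lies in the $\mathcal{A}$-orbit of $f_{\mathrm{CCR}}$: straighten $c$ onto the $u$-axis, rotate $a(0)$ to the second coordinate direction (possible since $\{c',a\}$ are independent), and use $\delta'(0)\neq0$ together with $\nu_v(p)=0$ to normalize the leading odd term to $u v^3$; the higher-order remainder is then absorbed by a Thom--Levine homotopy/integrating-vector-field argument (equivalently, by invoking a recognition theorem of Saji--Umehara--Yamada type). The genuinely technical part is this determinacy-and-normalization, that is, proving that (1)--(2) are not merely necessary but \emph{sufficient} to pin down the $\mathcal{A}$-class. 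The converse is then routine: a direct computation gives that $f_{\mathrm{CCR}}$ is of first kind with $\delta(u)=-\tfrac32 u$ (so $\delta(0)=0$, $\delta'(0)\neq0$), and both conditions are invariant under source-and-target diffeomorphisms, so they transfer to any $f$ that is $\mathcal{A}$-equivalent to $f_{\mathrm{CCR}}$.
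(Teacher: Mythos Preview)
The paper does not give its own proof of this lemma: it is stated with attribution \cite{UY-HM} and used as a black box in the proof of Theorem~\ref{theorem-criteria}. So there is no in-paper argument to compare against.

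Your outline is the standard singularity-theoretic route and is essentially the one taken in the original source (and in the closely related Fujimori--Saji--Umehara--Yamada paper \cite{FSUY}): choose adapted coordinates so that the singular curve is $\{v=0\}$ and the null direction is $\partial_v$, expand $f$ to third order in $v$, interpret $\delta(0)=0$ as failure of the front condition ($\nu_v(p)=0$) and $\delta'(0)\neq0$ as a simple transversal zero of that condition, and then invoke finite $\mathcal{A}$-determinacy to reduce to the normal form $(u,v^2,uv^3)$. You correctly flag the determinacy/normalization step as the genuine technical work; in the original references this is handled by a direct computation together with Whitney's criterion for cross caps applied to an auxiliary map, rather than a full Thom--Levine homotopy, which somewhat shortens the argument. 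One small slip: in the converse check you compute $\delta(u)=-\tfrac32 u$ for $f_{\mathrm{CCR}}$, but the precise constant depends on the normalization of $\nu$ you choose; what matters, and what you have, is that $\delta$ has a simple zero at $u=0$.
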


\subsection{Criteria for singularities of $[\varphi^*,\vec{e}_3]$-maxfaces}
We will consider  $\Sigma$  a simply-connected Riemann surface and  $(\psi,\psi^*) : \Sigma \rightarrow \R^3\times\mathbb{L}^3$ a Calabi's pair of a real analytic weight $\varphi$. Then, as $\psi$ and $\psi^*$ are, respectively,  solutions of \eqref{eq:system-psi} and \eqref{system-psi*}, we know that $\psi$ and $\psi^*$ are also analytic, that is, $\psi,\psi^*\in {\cal C}^\omega(\Sigma)$ and, in particular, $\psi$ has a well-defined  analytic unit vector field $N\in  {\cal C}^\omega(\Sigma)$.  
\begin{theorem}\label{theorem-frontal}
 The $ {\cal C}^\omega$-map $\psi^*:\Sigma \rightarrow \mathbb{L}^3\simeq \R^3$ is a frontal in $\R^3$ with unit normal  $\nu$,  given by \begin{equation}\label{nu-*} 
\nu = \dfrac{N -(1+\eta)\vec{e}_3}{\sqrt{2-\eta^2}},
\end{equation}
where $  \eta =\langle N,\vec{e}_3,\rangle$ is the angle function of $\psi$.

The singular set ${\cal S}$ of $\psi^*$ is the set of points  $\{\eta=0\}$ where the angle function  vanishes. If $\alpha:I_\varepsilon\rightarrow \Sigma$ is a singular curve with $\alpha(0)=p$, which always exists by the analyticity of $\eta$, then $p$ is a non-degenerate singular point if and only if $\Gamma'(0)\neq0$, where $\Gamma = N\circ\alpha$.
\end{theorem}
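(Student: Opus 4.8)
The plan is to establish the three assertions in turn, with the pointwise identity \eqref{psi-psi*} for $d\psi^*$ as the main tool. To see that $\psi^*$ is a frontal with the stated unit normal, I would check directly that $\nu$ in \eqref{nu-*} is a Euclidean unit vector orthogonal to $d\psi^*$. Writing $\langle\vec{e}_3,d\psi\rangle=dz$ and substituting \eqref{psi-psi*}, the orthogonality $\langle\nu,d\psi^*\rangle=0$ reduces to the scalar identities $\langle N-(1+\eta)\vec{e}_3,N+\vec{e}_3\rangle=-\eta(1+\eta)$ and $\langle N-(1+\eta)\vec{e}_3,d\psi\rangle=-(1+\eta)\,dz$, both immediate from $\langle N,N\rangle=1$, $\langle N,\vec{e}_3\rangle=\eta$ and $\langle N,d\psi\rangle=0$; the two contributions then cancel. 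The same inputs give $\langle N-(1+\eta)\vec{e}_3,N-(1+\eta)\vec{e}_3\rangle=2-\eta^2$, so $\nu$ has unit Euclidean length (note $2-\eta^2\ge 1$ because $|\eta|\le 1$) and is analytic since $N$ is. Hence $(\psi^*,\nu)$ is Legendrian and $\psi^*$ is a frontal.

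For the singular set I would differentiate \eqref{psi-psi*} in a conformal parameter $\xi=u+iv$ to obtain $\psi^*_u=z_u(N+\vec{e}_3)-\eta\,\psi_u$ and $\psi^*_v=z_v(N+\vec{e}_3)-\eta\,\psi_v$, whence a direct expansion of the Euclidean cross product gives
$$\psi^*_u\times\psi^*_v=\eta\,h,\qquad h:=-z_u(N+\vec{e}_3)\times\psi_v-z_v\,\psi_u\times(N+\vec{e}_3)+\eta\,\psi_u\times\psi_v.$$
Where $\eta\neq 0$, $\psi^*$ is an immersion, in agreement with Proposition \ref{confpara} and \eqref{eq:relation-EE*}; where $\eta=0$, both $\psi^*_u,\psi^*_v$ are multiples of $N+\vec{e}_3$, which is nonzero since $\eta=0$ forces $N\neq-\vec{e}_3$, so the rank of $d\psi^*$ drops. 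Thus the singular set $\{\lambda=0\}$, with $\lambda=\langle\psi^*_u\times\psi^*_v,\nu\rangle$, is exactly $\{\eta=0\}$.

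The heart of the matter is the non-degeneracy criterion. From $\psi^*_u\times\psi^*_v=\eta\,h$ we get $\lambda=\eta\,\langle h,\nu\rangle$, so along $\{\eta=0\}$ one has $d\lambda=\langle h,\nu\rangle\,d\eta$. Using the conformal identities $N\times\psi_v=-\psi_u$ and $N\times\psi_u=\psi_v$ I would evaluate $\langle h,\nu\rangle|_{\eta=0}=-\sqrt{2}\,(z_u^2+z_v^2)$, and this is nonzero because $\eta(p)=0$ makes $\vec{e}_3$ tangent to $\psi$, forcing $dz_p\neq 0$. Hence $p$ is non-degenerate $\iff d\eta_p\neq 0$. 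Writing $M=\left(\begin{smallmatrix} e & f\\ f & g\end{smallmatrix}\right)$ for the second fundamental form of $\psi$ at $p$ and treating $\vec{z}=(z_u,z_v)$ and $\vec{t}=\alpha'(0)$ as column vectors, the Weingarten relations give $d\eta_p\neq 0\iff M\vec{z}\neq 0$ and $\Gamma'(0)=dN_p(\vec{t})\neq 0\iff M\vec{t}\neq 0$, where $\vec{z}\neq 0$ (just shown) and $\vec{t}\neq 0$ (regular singular curve).

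The key observation, and the step I expect to be the main obstacle to get right, is that $M$ is traceless at $p$. Since $\psi$ is $\phie3$-minimal, its mean curvature vector is $\textbf{H}=\dot{\varphi}\,\vec{e}_3^{\,\perp}=\dot{\varphi}\,\eta\,N$ by \eqref{meancurvature}, so $\textbf{H}(p)=0$ and therefore $e+g=0$ at $p$. A traceless symmetric $2\times 2$ matrix satisfies $\det M=-(e^2+f^2)\le 0$, so $M$ is either $0$ or invertible; the intermediate rank-one case, which would break the equivalence, cannot occur. If $M=0$ then $M\vec{z}=M\vec{t}=0$, so $p$ is degenerate and $\Gamma'(0)=0$; if $M$ is invertible then $M\vec{z}\neq 0$ and $M\vec{t}\neq 0$ since $\vec{z},\vec{t}\neq 0$, so $p$ is non-degenerate and $\Gamma'(0)\neq 0$. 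In both cases the two conditions hold or fail together, which is exactly the assertion $p$ non-degenerate $\iff\Gamma'(0)\neq 0$.
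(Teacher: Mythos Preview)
Your proof is correct and shares the paper's key insight---that at a singular point the mean curvature of $\psi$ vanishes (since $H=\dot\varphi\,\eta$), forcing the second fundamental form to be traceless---but the execution differs in two places worth noting.

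For the frontal check, the paper computes $\psi^*_\xi\wedge\psi^*_{\bar\xi}$ in complex coordinates via the systems \eqref{eq:system-psi} and \eqref{CR-*}, rewriting the first two components through $\Delta_\psi\psi=HN$ to obtain the normal direction $(N_1,N_2,-1)$; you instead work directly from the pointwise identity \eqref{psi-psi*} and verify $\langle\nu,d\psi^*\rangle=0$ by inner products. Your route is shorter and avoids the PDE system. For the non-degeneracy criterion, the paper writes the shape operator in the basis $\{\vec e_3,J\vec e_3\}$, introduces an auxiliary function $\mu$, and traces through $\gamma'=a\vec e_3+b\,J\vec e_3$ to conclude $S\vec e_3=0\iff\Gamma'(0)=0$. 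Your argument is more transparent: the traceless symmetric $2\times2$ matrix $M$ satisfies $\det M=-(e^2+f^2)$, hence is either $0$ or invertible, so $M\vec z\neq0$ and $M\vec t\neq0$ hold or fail simultaneously once you know $\vec z,\vec t\neq0$. This dichotomy is exactly the content of the paper's computation with $\mu$, but packaged as a clean linear-algebra fact; it also makes explicit why the rank-one case that would break the equivalence cannot occur.
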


\begin{proof}

Firstly, we check that $\psi^*$ is a frontal. In fact, let $\xi=u+ iv$ be a complex parameter of $\Sigma$, then from \eqref{eq:system-psi},   \eqref{CR-*} and \eqref{normal-psi}, 
\begin{align*} - 2 i \psi^*_\xi \wedge \psi^*_{\cxi} &= e^{2\varphi} ( 2 (x_\xi z_{\cxi} + x_{\cxi} z_\xi), 2 (y_\xi z_{\cxi} + y_{\cxi} z_\xi), - 2i(  x_\xi y_{\cxi} - x_{\cxi}y_\xi ))\\
& =e^{2\varphi}(-\dfrac{4}{\dot{\varphi}} x_{\xi\cxi}, -\dfrac{4}{\dot{\varphi}}y_{\xi\cxi}, \ \eta_3)=-e^{2 \varphi}E(\dfrac{\Delta_\psi x}{\dot{\varphi}},\dfrac{\Delta_\psi y}{\dot{\varphi}}, -\eta), \nonumber\end{align*} 
where $\Delta_\psi$ denotes the Laplace operator associated to the induced metric of $\psi$. 
Hence, since  $\psi$ is a $\phie3$-minimal immersion,  $\Delta_\psi \psi = H N= \dot{\varphi} \ \eta N$ and we have 
\begin{equation}\label{frontal}
- 2 i \psi^*_{\xi}\wedge \psi^*_{\cxi} = -e^{2\varphi} E \eta (N_1,N_2,-1),\end{equation}
 that is, $\psi^*: \Sigma \rightarrow \mathbb{L}^3\simeq \R^3$ is a frontal  with normal
 \begin{equation}\label{normal-frontal}
 \nu = \dfrac{1}{\sqrt{2-\eta^2}}(N - (1+\eta)\vec{e}_3).
 \end{equation}
 Thus, from \eqref{frontal},  $\{\eta = 0\}$ is the singular set of $\psi^*$ and then, a singular point $p$ is non-degenerate if and only if 
 $d\eta_p\neq 0$, that is,  if and only if $S\vec{e}_3\neq 0$, where by $S$ we denote the shape operator of $\psi$.
 
Since $H=0$ at the singular points, then, along $\gamma = \psi\circ \alpha$, we can write  that 
\begin{equation}\label{system-A}
\begin{array}{rcl}
S \vec{e}_3 &=&  \alpha_1 \,\vec{e}_3+ \alpha_2\,J \vec{e}_3, \\
S J\vec{e}_3 &=& \alpha_2 \,\vec{e}_3 - \alpha_1 \,J \vec{e}_3,
\end{array}
\end{equation}
where  $J\vec{e}_3 = \Gamma' \wedge \vec{e}_3$. Thus, by putting  $\gamma'= a \vec{e}_3+ b J\vec{e}_3$ and taking into account that $\langle \Gamma' ,\vec{e}_3\rangle =0$, we have
$$
- \Gamma' = S \gamma' = a S \vec{e}_3 + b SJ \vec{e}_3 = (a \alpha_1 + b \alpha_2) \vec{e}_3 + (a \alpha_2 - b \alpha_1) J \vec{e}_3 = \mu \|\gamma'\|^2 J\vec{e}_3,
$$
where $ \alpha_1 = -\mu b$ and $\alpha_2 =\mu \, a$ for some function $\mu$. We conclude that $p$ is degenerate (that is, $S\vec{e}_3=0$) if and only if $\Gamma'(0)=0$.
\end{proof}
\begin{theorem}[Criteria for singularities]\label{theorem-criteria}
Let  $p$ be a non-degenerate singular point of $\psi^*$ and   $\alpha: I_\varepsilon\rightarrow \Sigma$ be the unique singular curve around $p$, with $\alpha(0)=p$. If  $\gamma=\psi\circ \alpha$, then
\begin{enumerate}[a)]
\item $\gamma' (0)\wedge e_3  \neq 0$ if and only if $\psi^*$ is a front at $p$. In this case,
\begin{enumerate}[i)]
\item  $\langle \gamma'(0), e_3 \rangle \neq 0$ if and only if $p$ is a cuspidal edge.
\item $\langle \gamma'(0), e_3 \rangle  = 0$ and $\langle \gamma''(0), e_3 \rangle  \neq 0$ if and only if $p$ is a swallowtail edge.
\end{enumerate}
\item We have a cuspidal cross cap if and only if $\gamma'(0) \wedge e_3 = 0$ and $\gamma''(0) \wedge e_3 \neq 0$.
\end{enumerate}
\end{theorem}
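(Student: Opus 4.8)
The plan is to read off all three criteria from Lemmas~\ref{cuspidal-swallowtail} and~\ref{lemma-crosscap} after transporting every quantity to the orthonormal tangent frame $\{\vec{e}_3,\,N\wedge\vec{e}_3\}$ of $\psi$, which is available all along the singular curve: since ${\cal S}=\{\eta=0\}$, on $\alpha$ the vector $\vec{e}_3$ is tangent to $\psi$ and $N\perp\vec{e}_3$, so $\{\vec{e}_3,\,N\wedge\vec{e}_3\}$ is an orthonormal basis of $T\psi$ with $(N\wedge\vec{e}_3)\wedge\vec{e}_3=-N$. Writing $\gamma'(t)=a(t)\,\vec{e}_3+b(t)\,(N\wedge\vec{e}_3)$ --- legitimate because $\eta\equiv0$ on $\alpha$ --- I record the two elementary identities $\langle\gamma'(t),\vec{e}_3\rangle=a(t)$ and $\gamma'(t)\wedge\vec{e}_3=-b(t)\,N$, so that $\langle\gamma'(0),\vec{e}_3\rangle\neq0$ iff $a(0)\neq0$ and $\gamma'(0)\wedge\vec{e}_3\neq0$ iff $b(0)\neq0$. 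I keep the shape-operator data from the proof of Theorem~\ref{theorem-frontal}, namely $S(N\wedge\vec{e}_3)=\alpha_2\vec{e}_3-\alpha_1(N\wedge\vec{e}_3)$ with $\alpha_1=-\mu b$, $\alpha_2=\mu a$, non-degeneracy being equivalent to $\mu(0)\neq0$.

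First I would identify the null direction and settle the front condition. From \eqref{psi-psi*}, along ${\cal S}$ one has $d\psi^*=\langle\vec{e}_3,d\psi\rangle\,(N+\vec{e}_3)$, so $\ker d\psi^*$ is the line on which $d\psi$ points in the $(N\wedge\vec{e}_3)$-direction; thus a null vector $\varrho$ is characterized by $d\psi(\varrho)=c\,(N\wedge\vec{e}_3)$ with $c\neq0$. Now $\psi^*$ is a front at $p$ exactly when its Legendrian lift $(\psi^*,\nu)$ immerses, i.e.\ when $d\nu(\varrho)\neq0$. Differentiating \eqref{nu-*}, the factor $(2-\eta^2)^{-1/2}$ has vanishing differential at $\eta=0$, so $d\nu(\varrho)=\tfrac{1}{\sqrt2}\langle dN(\varrho),N\wedge\vec{e}_3\rangle\,(N\wedge\vec{e}_3)$; using $dN(\varrho)=-c\,S(N\wedge\vec{e}_3)$ and the relations above gives $\langle dN(\varrho),N\wedge\vec{e}_3\rangle=c\,\alpha_1=-c\mu b$. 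Hence $d\nu(\varrho)\neq0$ iff $b(0)\neq0$ iff $\gamma'(0)\wedge\vec{e}_3\neq0$, which is the front part of (a). When $\psi^*$ is a front, Lemma~\ref{cuspidal-swallowtail} applies, and the transversality of $\varrho(0)$ (direction $N\wedge\vec{e}_3$) to $\alpha'(0)$ (direction $\gamma'(0)=a\vec{e}_3+b(N\wedge\vec{e}_3)$) holds iff $a(0)\neq0$, i.e.\ iff $\langle\gamma'(0),\vec{e}_3\rangle\neq0$; this is (i).

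For the swallowtail (ii) I would compute the determinant in Lemma~\ref{cuspidal-swallowtail}. Pulling the wedge back through the immersion $\psi$, $d\psi(\varrho)\wedge\gamma'=\det(\varrho,\alpha')\,\psi_u\wedge\psi_v$, and pairing with $N$ gives $\langle d\psi(\varrho)\wedge\gamma',N\rangle=-c\,a$, so $\det(\varrho,\alpha')$ equals $-ca$ divided by the positive area density. Since the swallowtail hypothesis forces $a(0)=0$, every term in $\tfrac{d}{dt}\big|_0\det(\varrho,\alpha')$ drops except the one carrying $a'(0)=\langle\gamma''(0),\vec{e}_3\rangle$, whence $\tfrac{d}{dt}\big|_0\det(\varrho,\alpha')\neq0$ iff $\langle\gamma''(0),\vec{e}_3\rangle\neq0$, which is (ii).

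The genuine work, and the step I expect to be the main obstacle, is the cuspidal cross cap (b) via Lemma~\ref{lemma-crosscap}. Here $\gamma'(0)\wedge\vec{e}_3=0$ means $b(0)=0$, which simultaneously guarantees transversality (then $a(0)\neq0$) and, by the front computation, forces $D^{\psi^*}_\varrho\nu=d\nu(\varrho)=-\tfrac{c\mu b}{\sqrt2}(N\wedge\vec{e}_3)$ to vanish at $t=0$; hence $\delta(0)=0$ automatically. To obtain $\delta'(0)$ I would use that $\tilde\alpha'=d\psi^*(\alpha')=a\,(N+\vec{e}_3)$ along ${\cal S}$ and that $D^{\psi^*}_\varrho\nu$ vanishes at $0$, so only the term differentiating this normal derivative survives, $\delta'(0)=\langle\,\tilde\alpha'(0)\wedge\tfrac{d}{dt}\big|_0 D^{\psi^*}_\varrho\nu,\ \nu(0)\,\rangle$. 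Inserting $\tilde\alpha'(0)=a(0)(N+\vec{e}_3)$, $\nu(0)=\tfrac{1}{\sqrt2}(N-\vec{e}_3)$ and the identity $(N+\vec{e}_3)\wedge(N\wedge\vec{e}_3)=N-\vec{e}_3$ (valid at $\eta=0$) collapses the triple product to a nonzero multiple of $a(0)\,c(0)\,\mu(0)\,b'(0)$, so $\delta'(0)\neq0$ iff $b'(0)\neq0$. Finally, differentiating $\gamma'\wedge\vec{e}_3=-b\,N$ at $t=0$ (where $b(0)=0$) yields $\gamma''(0)\wedge\vec{e}_3=-b'(0)\,N$, so $b'(0)\neq0$ iff $\gamma''(0)\wedge\vec{e}_3\neq0$; together with $\delta(0)=0$ and transversality this is exactly (b). The delicate points are bookkeeping $D^{\psi^*}_\varrho\nu$ along the moving frame and exploiting the vanishing of $b$ (hence of $D^{\psi^*}_\varrho\nu$) at $t=0$ to produce the cancellations; once these are in place the remaining triple-product identities are routine.
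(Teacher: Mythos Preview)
Your proposal is correct and follows essentially the same strategy as the paper: work in the orthonormal tangent frame $\{\vec{e}_3,\,N\wedge\vec{e}_3\}$ along $\mathcal{S}$, use the shape-operator identities $\alpha_1=-\mu b$, $\alpha_2=\mu a$ from Theorem~\ref{theorem-frontal}, and read off the three criteria from Lemmas~\ref{cuspidal-swallowtail} and~\ref{lemma-crosscap}. The differences are only in execution: you identify the null direction via \eqref{psi-psi*} (slicker than the paper's explicit coordinate computation with $z_u,z_v$), and you settle the front condition by computing $d\nu(\varrho)$ directly, whereas the paper instead characterises $\ker d\nu$ as the line through $S\vec{e}_3$ and then compares it with the null line; both routes land on $\alpha_1=-\mu b\neq 0$. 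For the cross cap the paper organises $\delta(t)$ as $\tfrac{e^{\varphi}}{\sqrt2}\,a_1\alpha_1=-\tfrac{e^{\varphi}}{\sqrt2}\,a_2\alpha_2$ and differentiates the second expression (using $a_2(0)=0$), while you differentiate $D^{\psi^*}_{\varrho}\nu$ itself and use the triple-product identity $(N+\vec{e}_3)\wedge(N\wedge\vec{e}_3)=N-\vec{e}_3$; the outcomes agree. One cosmetic remark: the identity \eqref{psi-psi*} as printed is missing the factor $e^{\varphi}$ (cf.\ \eqref{partialsxy}), so your $\tilde\alpha'$ is really $e^{\varphi}a\,(N+\vec{e}_3)$, but this nonzero scalar is harmless for every conclusion you draw.
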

\begin{proof}
Since  $H=0$ along the singular set,   $\Delta_\psi \psi (\alpha(t)) = 0$, and then,  from \eqref{eq:system-psi} 
$$
d\psi^*(-z_v \partial_u + z_u \partial_v) = (-z_v x_v - z_u x_u, z_u y_u + z_v y_v, 0) = 0.
$$
In particular, we have that $$\varrho= -z_v \dfrac{\partial}{\partial u} + z_u \dfrac{\partial}{\partial v} $$ is a null direction along with $\alpha$. Furthermore, let us observe that 
$$d\psi(\varrho) = (-z_v x_u + z_u x_v, -z_v y_u + z_u y_v, 0) = | \psi_u \wedge \psi_v | N \wedge e_3$$ 
 and $d\psi (\varrho)$ can be also  considered as a null direction in $\R^3$  which is proportional to $ J\vec{e}_3=N \wedge e_3$.

Next,  we provide a necessary and sufficient condition for the frontal $\psi^*$ to be a front. If $X$ is a tangent vector such that $d\nu(X)=0$, then $\psi^*$ is a front at a neighbourhood of $p$ if and only if $X$ is not proportional to $N\wedge \vec{e}_3$. 

But, from \eqref{normal-frontal} we have that
 $d \nu (X) = 0$ if and only if $SX= \langle SX,\vec{e}_3\rangle \vec{e}_3$, that is, if and only if $S X \wedge e_3 = 0$. Writing $X = a_1\vec{e}_3+ a_2J\vec{e}_3$ we have from  \eqref{system-A} that
$$
S X = a_1 S \vec{e}_3 + a_2 SJ \vec{e}_3 = (a_1\alpha_1 + b \alpha_2) \vec{e}_3 + (a_1 \alpha_2 - a_2 \alpha_1) J \vec{e}_3.
$$

Thus, $SX \wedge \vec{e}_3 = 0$ if and only if $a_1 \alpha_2 - a_2 \alpha_1= 0$, which is equivalent to  $X \wedge S\vec{e}_3=0$, i.e., $X$ and  $S\vec{e}_3$ must be  proportional. Therefore, we have a front if and only if $S\vec{e}_3 \wedge (N \wedge\vec{e}_3) \neq 0$ or, equivalently, $\langle S \vec{e}_3, \vec{e}_3 \rangle \neq 0$.

Let us now show that $\langle S \vec{e}_3, \vec{e}_3 \rangle \neq 0$ if and only if $\gamma' \wedge \vec{e}_3 \neq 0$. On the one hand, let us suppose that $\gamma' \wedge \vec{e}_3 = 0$. Since $\langle N', \vec{e}_3 \rangle = 0$, we have
$
\langle S \gamma', \vec{e}_3 \rangle = 0$ and then $ \langle S \vec{e}_3, \vec{e}_3 \rangle = 0.
$
Therefore $\langle S \vec{e}_3, \vec{e}_3 \rangle \neq 0$ implies $\gamma' \wedge \vec{e}_3 \neq 0$.

On the other hand, consider that $\gamma' \wedge \vec{e}_3 \neq 0$ and suppose by contradiction that $\langle S \vec{e}_3, \vec{e}_3 \rangle = 0$. In this case, we may write $\gamma' = a_1 \vec{e}_3 + a_2 J \vec{e}_3$ with $a_2 \neq 0$. Since  $-N' = a_1 S\vec{e}_3 + a_2 SJ \vec{e}_3 $, we conclude that $0=-\langle N', \vec{e}_3 \rangle = a_2 \langle S J \vec{e}_3, \vec{e}_3 \rangle$,  and then $\langle S J \vec{e}_3, \vec{e}_3 \rangle = 0$.  It follows now that $S \vec{e}_3 = 0$ and the point $p$ is degenerated, which is a contradiction. Thus, $\langle S \vec{e}_3, \vec{e}_3 \rangle \neq 0$ and we conclude that $\psi^*$ is a front in a neighbourhood of $p$  if and only if $\gamma'(0) \wedge \vec{e}_3 \neq 0$.

As a next step we will construct the function $\delta(t)$ in Lemma \ref{lemma-crosscap}. Since $\langle \nu, \nu\rangle = 1$, we have $d\nu(J\vec{e}_3) = D^{\psi^*}_{J\vec{e}_3} \nu$ and  along the singular curve,
\begin{align}
&\nu= \dfrac{1}{\sqrt{2}} (N - \vec{e}_3),\label{nu-alpha}\\
&D^{\psi^*}_{J\vec{e}_3} \nu =  \dfrac{1}{\sqrt{2}} \langle SJ\vec{e}_3,J\vec{e}_3\rangle J\vec{e}_3 =\langle SJ\vec{e}_3,J\vec{e}_3\rangle\ \nu\wedge \vec{e}_3. \label{N*-derivative}
\end{align}

Using that $\langle N', \vec{e}_3\rangle =0$, \eqref{system-A}, \eqref{nu-alpha} and \eqref{N*-derivative} we obtain
$$
\begin{array}{rcl}
\delta(t) &=& \dfrac{1}{\sqrt{2}} \langle (\gamma^*)',\vec{e}_3\rangle  \langle SJ\vec{e}_3, J\vec{e}_3\rangle\vspace{.1in} \\
&=&\dfrac{e^\varphi}{\sqrt{2}} \langle (\gamma',\vec{e}_3\rangle  \langle SJ\vec{e}_3, J\vec{e}_3\rangle\vspace{.1in}\\
&=&\dfrac{e^\varphi}{\sqrt{2}} a_1\alpha_1= -\dfrac{e^\varphi}{\sqrt{2}}a_2 \alpha_2,
\end{array}
$$
where $\gamma^*=\psi^*\circ \alpha$.

We are now  in position to verify the conditions (i) and (ii) of Theorem \ref{theorem-criteria}. Regarding Lemma \ref{cuspidal-swallowtail}, if we write $\gamma'=a_1\vec{e}_3+ a_2J\vec{e}_3$, then  $\psi^*$ has a cuspidal edge in a neighbourhood of $p$ if and only if $a_1(0) \neq 0$, which is equivalent to $\langle \gamma'(0), \vec{e}_3\rangle \neq 0$. If $a_1(0)=0$, then we have a swallowtail edge at $p$  if and only if $a_1'(0)=0$. Therefore $p$ is  swallowtail edge singularity  if and only if $\langle \gamma'(0), \vec{e}_3\rangle = 0$ and $\langle \gamma''(0), \vec{e}_3 \rangle \neq 0$.   

Finally, for proving b) of Theorem \ref{theorem-criteria} we need to verify that the two conditions in Lemma \ref{lemma-crosscap} are equivalent to b).  In fact, for the first condition in Lemma \ref{lemma-crosscap},  $\gamma' \wedge J \vec{e}_3 \neq 0$ or equivalently $\langle \gamma', \vec{e}_3\rangle\neq 0$.  In this case, $\delta(0)=0$ if and only if $\alpha(0)=\langle S\vec{e}_3, \vec{e}_3 \rangle =0$, and we already know that this condition is equivalent to $\gamma'(0) \wedge \vec{e}_3 = 0$. To compute $\delta'(0)$ we use the expression $\delta(t) = -\dfrac{e^{\varphi}}{2} a_2 \alpha_2$. In this case, since $\delta(0)=0$, we obtain that $\delta'(0) = \dfrac{e^{\varphi}}{2} a_2'(0) \alpha_2(0)$. But $ \alpha_2(0)\neq 0$, otherwise $p$ is degenerate, thus, $\delta'(0) \neq 0$ if and only if $a_2'(0) \neq 0$. Finally, since $a_2(0)=0$ and $\gamma' = a_1 \vec{e}_3+ a_2 J\vec{e}_3$, we have that $\gamma''(0) = a_1'(0) \vec{e}_3 + a_2'(0) J\vec{e}_3$ and $\gamma''(0) \wedge \vec{e}_3 = - a_2'(0) N$. Consequently, $\delta'(0) \neq 0$ if and only if $\gamma''(0) \wedge \vec{e}_3 \neq 0$. We conclude that we have a cuspidal cross cap if $\gamma'(0) \wedge \vec{e}_3 = 0$ and $\gamma''(0) \wedge \vec{e}_3 \neq 0$.
\end{proof}
\subsection{The canonical conformal parametrization around a non-degenerate singular point}
Let $\alpha$ be a non-degenerate singular curve around a singular point $p\in \Sigma$. Since the angle function is analytic,  $\eta\in {\cal C}^\omega(\Sigma)$, we have that $\alpha$ must be   analytic and therefore there is a conformal parametrization $\zeta:=u+iv$,  $u\in I_\varepsilon:=(-\varepsilon,\varepsilon)$ around $p$ such that $\alpha$ can be parametrized as  $\alpha(u) = \zeta^{-1}(u,0)$, $u\in I_\varepsilon$. 

On the other side, from Theorem \ref{theorem-frontal}, we know that the curve  $\Gamma= N\circ \alpha$ is a regular curve  in $\R^3$ with a real analytic arc-length parameter $s(u)$. Thus,  by the real analyticity of $s(u)$,  there exists $\vartheta(\zeta) = s(\zeta)$, unique holomorphic function satisfying that  $\vartheta(u,0)=s(u)$, $u\in I_\varepsilon$. In other words, $\vartheta$ provides a new conformal parameters around p, which, due to abuse of notation, we will continue writing as $u+ i v$ in such a way that  for some small enough $\varepsilon>0 $    and $u\in I_\varepsilon$, we have that
\begin{align*}\Gamma(u) &= \dfrac{\psi_u \wedge \psi_v}{\|\psi_u \wedge \psi_v\|}(u,0)=N(u,0) = (\cos u, -\sin u,0),\\
\gamma'(u)&= \psi_u(u,0) = (\lambda_1(u),\lambda_2(u),A(u)),
\end{align*}
with $\lambda_1,\lambda_2,A\in {\cal C}^\omega(I_\varepsilon)$. Consequently,
$$ \psi_v(u,0) = \Gamma(u) \wedge \gamma'(u) = (-A(u)\sin u, -A(u) \cos u, B(u)), $$
with $B(u) = \lambda_2(u) \cos u + \lambda_1(u)\sin u$. Using now that $\langle \Gamma(u),\gamma'(u)\rangle=0$, we conclude that $\lambda_1(u) = B(u) \sin u$ and $\lambda_2(u) = B(u) \cos u$.

Taking into account \eqref{CR-*}  and as a consequence of the previous reasoning, we have the following proposition,\begin{proposition}\label{canonical-parametrization}If $\varphi\in{\cal C}^\omega(I)$ is real analytic and $(\psi,\psi^*):\Sigma \rightarrow \R^3\times \mathbb{L}^3$ is a Calabi's pair of weight   $\varphi$, then around any non-degenerate singular point $p$ of $\psi^*$ with $\langle\psi(p), \vec{e}_3\rangle\in I$, there is a unique conformal parametrization $$\zeta:{\cal U}(p)\subset \Sigma\rightarrow {\rm D}_\varepsilon = \{ (u, v) \ : \ \sqrt{u^2 + v^2} < \varepsilon\},\quad \zeta(p) = 0,$$ such that $\alpha(u) = \zeta^{-1}(u,0)$, $u\in I_\varepsilon$,  is the singular set of $\psi^*$ around $p$ and
\begin{align*}
\psi_u(u,0)&= (B(u) \sin u, B(u) \cos u, A(u)),\\
\psi_v(u,0)&= (-A(u) \sin u, -A(u) \cos u, B(u)),\\
\psi^*_u(u,0)&= A^*(u) (\cos u,  -\sin u, 1),\\
\psi^*_v(u,0) &= B^*(u)( \cos u, - \sin u, 1),
\end{align*}
where  $A,B\in{\cal C}^\omega(I_\varepsilon)$, $A^2 + B^2\neq 0$ and
\begin{align*}
& A^*= e^{\varphi\circ a} A, \quad B^*= e^{\varphi\circ a} B,\quad a(u) = \langle\psi(p), \vec{e}_3\rangle + \int_0^u A(t)dt, 
\end{align*}
\end{proposition}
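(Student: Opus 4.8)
The plan is to read the proposition off the normal-form construction carried out in the paragraphs just before the statement, and then to transport that data from $\psi$ to $\psi^*$ through the system \eqref{CR-*}. That preceding discussion already delivers, around a non-degenerate singular point $p$, a conformal chart $\zeta = u+iv$ in which the singular set is the real axis $\alpha(u)=\zeta^{-1}(u,0)$ and in which $N(u,0)=(\cos u,-\sin u,0)$ together with
\[
\psi_u(u,0) = (B(u)\sin u,\, B(u)\cos u,\, A(u)), \qquad \psi_v(u,0) = (-A(u)\sin u,\, -A(u)\cos u,\, B(u)).
\]
So my first task is simply to invoke that construction, making explicit the two facts that justify the circular normalisation and the regularity hypotheses: along the singular set $\eta=\langle N,\vec{e}_3\rangle$ vanishes, so $\Gamma=N\circ\alpha$ takes values in the unit circle $\mathbb{S}^2\cap\vec{e}_3^\perp$ and, after a rotation about the vertical axis placing $N(p)=(1,0,0)$ (such rotations preserve the metric and the height, hence the class of $[\varphi,\vec{e}_3]$-minimal surfaces), its arc-length parameter gives exactly $\Gamma(u)=(\cos u,-\sin u,0)$; and $A^2+B^2\neq0$ because $\psi$ is an immersion, so $\gamma'(u)=\psi_u(u,0)\neq0$.

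The heart of the proof is the passage to $\psi^*$, which I would carry out in the conformal variable. Along $v=0$ the two displayed vectors combine into
\[
\psi_\xi(u,0) = \tfrac12(\psi_u - i\psi_v)(u,0) = \tfrac12(B+iA)(\sin u,\, \cos u,\, -i),
\]
and feeding this into \eqref{CR-*} in the form $\psi^*_\xi = e^{\varphi}(-i\,y_\xi,\, i\,x_\xi,\, z_\xi)$ of \eqref{coordinates-psi-*}, together with the identity $-i(B+iA)=A-iB$, collapses everything onto a single real direction:
\[
\psi^*_\xi(u,0) = \tfrac12\, e^{\varphi}\,(A-iB)\,(\cos u,\, -\sin u,\, 1).
\]
Because the vector $(\cos u,-\sin u,1)$ is real, taking $\psi^*_u = 2\operatorname{Re}\psi^*_\xi$ and $\psi^*_v = -2\operatorname{Im}\psi^*_\xi$ gives at once $\psi^*_u(u,0)=e^{\varphi}A\,(\cos u,-\sin u,1)$ and $\psi^*_v(u,0)=e^{\varphi}B\,(\cos u,-\sin u,1)$, that is $A^*=e^{\varphi}A$ and $B^*=e^{\varphi}B$.

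It then remains to identify the argument of $\varphi$ along the curve. Since $\varphi$ is evaluated at the height $\langle\psi,\vec{e}_3\rangle$ and the third coordinate of $\psi_u(u,0)$ is precisely $A(u)$, one has $\frac{d}{du}\langle\psi(\alpha(u)),\vec{e}_3\rangle = A(u)$ with value $\langle\psi(p),\vec{e}_3\rangle$ at $u=0$ (as $\zeta(p)=0$); integrating yields $\langle\psi(\alpha(u)),\vec{e}_3\rangle = a(u)$, so $\varphi$ along the singular curve equals $\varphi\circ a$ and the stated $A^*=e^{\varphi\circ a}A$, $B^*=e^{\varphi\circ a}B$ follow. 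The one genuinely delicate ingredient here — rather than this substitution into \eqref{CR-*}, which is routine — is the construction and uniqueness of the canonical chart itself: that $\Gamma$ lands on the unit circle (forced by $\eta=0$ on $\mathcal{S}$), that its real-analytic arc-length parameter extends to a unique biholomorphic coordinate having $\{v=0\}$ as the singular curve, and that this pins $\zeta$ down uniquely once $N(p)=(1,0,0)$ is arranged. These are exactly the facts supplied by Theorem \ref{theorem-frontal} and the analyticity of $\eta$ in the discussion preceding the statement, so in the write-up they would be invoked rather than reproved.
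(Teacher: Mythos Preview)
Your proof is correct and follows essentially the same route as the paper: the paper derives the normal form for $\psi_u(u,0)$ and $\psi_v(u,0)$ in the discussion immediately preceding the proposition, and then simply states the result as ``a consequence of the previous reasoning'' together with \eqref{CR-*}. You have supplied the explicit computation through $\psi^*_\xi = e^{\varphi}(-iy_\xi,\,ix_\xi,\,z_\xi)$ that the paper leaves implicit, and your identification of $a(u)$ by integrating the third coordinate is exactly what is needed; the only addition is your remark about the vertical rotation fixing $N(p)=(1,0,0)$, which the paper suppresses but which is indeed required to pin down the starting point of the arc-length parametrisation of $\Gamma$.
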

\begin{definition} \label{def-canonical}The parametrization $\zeta$ in Proposition \ref{canonical-parametrization} will be called the canonical conformal parametrization around the non-degenerate singular point $p$.
\end{definition}
\begin{remark} {\rm Observe from \cite[Section 1.]{UY-fronts} that if a singular curve $\alpha(t)$  of $\psi^*$  consists only of cuspidal edges, then the function $A^*$ in Proposition \ref{canonical-parametrization} is, up to the sign,   $\dfrac{1}{2\kappa _s}$  where $\kappa_s$ is   the singular curvature of $\psi^*\circ\alpha$}.
\end{remark} 
\subsection{Classification of non-degenerate singularities}
In this Section  we will study, locally,  the space of  $[\varphi^*,\vec{e}_3]$-maxfaces with a prescribed  curve of non-degenerated singular points.  But first, we will show that $\phie3$-minimal surfaces are uniquely determined around a curve where the angle function vanishes identically and no tangent vector to the curve is an asymptotic direction.

As in the previous section, we will assume that $\varphi$ is  real analytic in some interval $I\subseteq \R$ and  $A,B:I_\varepsilon\rightarrow \R$ are  two real analytic functions with $A^2 + B^2>0$. Then, for any fixed point $(p_1,p_2,p_3)\in \R^3$, $p_3 \in I$,   the Cauchy problem
\begin{equation}\label{cauchy-psi}
\left\{
\begin{array}{l} 
\Delta \psi = -\dot{\varphi}(z) (\langle \nabla x,\nabla z\rangle,\langle \nabla y,\nabla z\rangle, - |\nabla x|^2- |\nabla y|^2+ |\nabla z|^2),\vspace{.1in}\\
\psi(u,0)  =  (p_1,p_2,p_3) +\displaystyle  \int_0^u(B(t) \sin t, B(t) \cos t, A(t)) dt\vspace{.1in}\\
\psi_v(u,0)  =   (-A(u) \sin u, -A(u) \cos u, B(u)),
\end{array}
\right.
\end{equation}
is well-posed on a small disk $D_R=\{(u,v) : \|(u,v)\|<R\}$ of radius $R$ with $R$ small enough,  
where $\Delta$ and $\nabla$ stand for  the usual Laplacian and Gradient operators in $D_R$.
 \begin{theorem}\label{existence-psi}  For  small enough $R$, there exists a unique real analytic solution 
 $\psi=(x,y,z): D_R\longrightarrow \R^3$ to \eqref{cauchy-psi} which defines a conformally immersed $[\varphi,\vec{e}_3]$-minimal surface in $\R^3$ whose angle function $\eta$ satisfies that $\eta(u,v)\neq 0$ if $v\neq0$, $\eta(u,0)=0$ and $\eta_v(u,0)=1$ for all $u$.
 \end{theorem}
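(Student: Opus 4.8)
The plan is to solve \eqref{cauchy-psi} by the Cauchy--Kovalevskaya theorem and then upgrade the resulting analytic map to a conformal $[\varphi,\vec{e}_3]$-minimal immersion with the prescribed angle function. First I would observe that, writing $\xi=u+iv$ and using $\Delta=4\partial_\xi\partial_{\bar\xi}$, the elliptic system in \eqref{cauchy-psi} is exactly the real form of \eqref{eq:system-psi}. Solving it for $\psi_{vv}$ shows that $\{v=0\}$ is non-characteristic and the right-hand side $F(\psi,\nabla\psi)$ is real analytic in its arguments (here the real analyticity of $\varphi$, hence of $\dot{\varphi}$, is essential). Since the Cauchy data $\psi(u,0)$ and $\psi_v(u,0)$ are real analytic, Cauchy--Kovalevskaya yields, for $R$ small enough, a unique real analytic solution $\psi=(x,y,z):D_R\to\R^3$.

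The heart of the argument is to prove that this $\psi$ is conformal, i.e. that $\Phi:=x_\xi^2+y_\xi^2+z_\xi^2\equiv0$ on $D_R$. The idea is a Hopf-differential computation: differentiating $\Phi$ in $\bar\xi$ and repeatedly substituting the three equations of \eqref{eq:system-psi}, all the cross terms cancel and one is left with $\Phi_{\bar\xi}=-\dot{\varphi}\,z_{\bar\xi}\,\Phi=-(\varphi\circ z)_{\bar\xi}\,\Phi$, whence $\partial_{\bar\xi}\!\left(e^{\varphi\circ z}\Phi\right)=0$. Thus $e^{\varphi\circ z}\Phi$ is a holomorphic function of $\xi$ on the connected disk $D_R$. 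On the initial line the Cauchy data give $|\psi_u|^2=A^2+B^2=|\psi_v|^2$ and $\langle\psi_u,\psi_v\rangle=0$, so $\Phi(u,0)=\tfrac14\big(|\psi_u|^2-|\psi_v|^2-2i\langle\psi_u,\psi_v\rangle\big)=0$. A holomorphic function vanishing along the real segment $\{v=0\}$ must vanish identically, so $\Phi\equiv0$ and $\xi$ is a conformal parameter. Since $\psi$ also satisfies \eqref{eq:system-psi}, the characterization stated before \eqref{eq:system-psi} will identify $\psi$ as a $[\varphi,\vec{e}_3]$-minimal surface once the immersion property is known.

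It remains to handle the immersion property and the angle function. Computing the cross product on the initial curve gives $\psi_u\wedge\psi_v(u,0)=(A^2+B^2)(\cos u,-\sin u,0)$, which is nonzero since $A^2+B^2>0$; hence $\psi$ is an immersion along $\{v=0\}$ and, by continuity, on $D_R$ after shrinking $R$. The unit normal there is $N(u,0)=(\cos u,-\sin u,0)$, so $\eta(u,0)=\langle N,\vec{e}_3\rangle(u,0)=0$ for all $u$. To obtain $\eta_v(u,0)=1$ I would compute the second-order data along the curve: differentiating the Cauchy data gives $\psi_{uu}(u,0)$ and $\psi_{uv}(u,0)$, while the PDE gives $\psi_{vv}(u,0)=F(u,0)-\psi_{uu}(u,0)$ with the crucial simplification that $F(u,0)=0$ (each gradient pairing in $F$ vanishes on the curve by direct substitution). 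Writing $\eta=W_3/\|W\|$ with $W=\psi_u\wedge\psi_v$ and using $W_3(u,0)=0$, one finds $\eta_v(u,0)=(W_v)_3/\|W\|=(A^2+B^2)/(A^2+B^2)=1$. Finally, since $\eta$ is analytic and vanishes on $\{v=0\}$, one factors $\eta(u,v)=v\,\tilde\eta(u,v)$ with $\tilde\eta(u,0)=\eta_v(u,0)=1$; shrinking $R$ so that $\tilde\eta>0$ on $D_R$ shows that $\eta(u,v)\neq0$ precisely when $v\neq0$.

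I expect the conformality step to be the main obstacle --- not because any single computation is hard, but because it requires recognizing the Hopf-differential mechanism that turns $e^{\varphi\circ z}\Phi$ into a holomorphic quantity and then invoking the identity principle; this is what genuinely forces conformality onto the Cauchy--Kovalevskaya solution, which a priori only solves the system \eqref{eq:system-psi} without any conformality constraint. The remaining verifications are direct computations with the prescribed data, and the analyticity of $\varphi$ enters only to guarantee the applicability of Cauchy--Kovalevskaya and of the identity theorem.
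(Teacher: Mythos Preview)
Your proof is correct and follows essentially the same route as the paper: Cauchy--Kovalevskaya for existence and uniqueness, the differential identity $\Phi_{\bar\xi}=-\dot\varphi\,z_{\bar\xi}\Phi$ for conformality, and direct computation of $\eta(u,0)=0$, $\eta_v(u,0)=1$ from the Cauchy data. The only notable variation is in the conformality step: you multiply by the integrating factor $e^{\varphi\circ z}$ to make $\Phi$ genuinely holomorphic and then apply the identity theorem, whereas the paper stops at noting that $\Phi$ is pseudo-holomorphic and invokes Bers' isolated-zeros theorem; your version is slightly more elementary but the mechanism is the same.
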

 \begin{proof} Since $p_3 \in I$, we have existence and uniqueness of solution of \eqref{cauchy-psi} in $D_R$ for  small enough $R$.
 
  If $\xi= u+iv$, the conformal condition for $\psi$ is equivalent to the complex equation $\langle \psi_\xi,\psi_\xi\rangle =0$. But from the PDE in \eqref{cauchy-psi} we have 
 $$ \langle \psi_\xi,\psi_\xi\rangle _{\cxi}= 2\langle \psi_{\xi\cxi},\psi_\xi\rangle = -\dot{\varphi}(z) z_{\cxi} \langle \psi_\xi,\psi_\xi\rangle, $$
 that is, $ \langle \psi_\xi,\psi_\xi\rangle$ is  pseudo-holomorphic and its zeros must be isolated (see \cite[Theorem 6.1]{Bers}). But from \eqref{cauchy-psi}, $ \langle \psi_\xi,\psi_\xi\rangle(u,0) =0$ and we deduce that $ \langle \psi_\xi,\psi_\xi\rangle=0$ globally on $D_R$. Also taking into account  that  $\|\psi_u (u,0)\|^2 = A^2 + B^2>0$ and \eqref{eq:system-psi}, we   have that, for small enough $R$, $\psi:D_R \rightarrow \R^3$  is a conformal $[\varphi,\vec{e}_3]$-minimal immersion.

 On the other hand, if $$\eta=\langle N,\vec{e}_3\rangle=\langle \dfrac{\psi_u \wedge\psi_v}{\|\psi_u\|^2},\vec{e}_3\rangle$$ is the angle function then, from \eqref{cauchy-psi}, we have that $\eta(u,0)=0$ and 
 $$\eta_v(u,0) = \frac{1}{A^2 + B^2}\langle \psi_{uv}\wedge \psi_v + \psi_{uu}\wedge\psi_u,\vec{e}_3\rangle (u,0)= 1,$$
 which finishes the proof.
 \end{proof}
  \begin{figure}[h]
\begin{center}
\includegraphics[width=.30\textwidth]{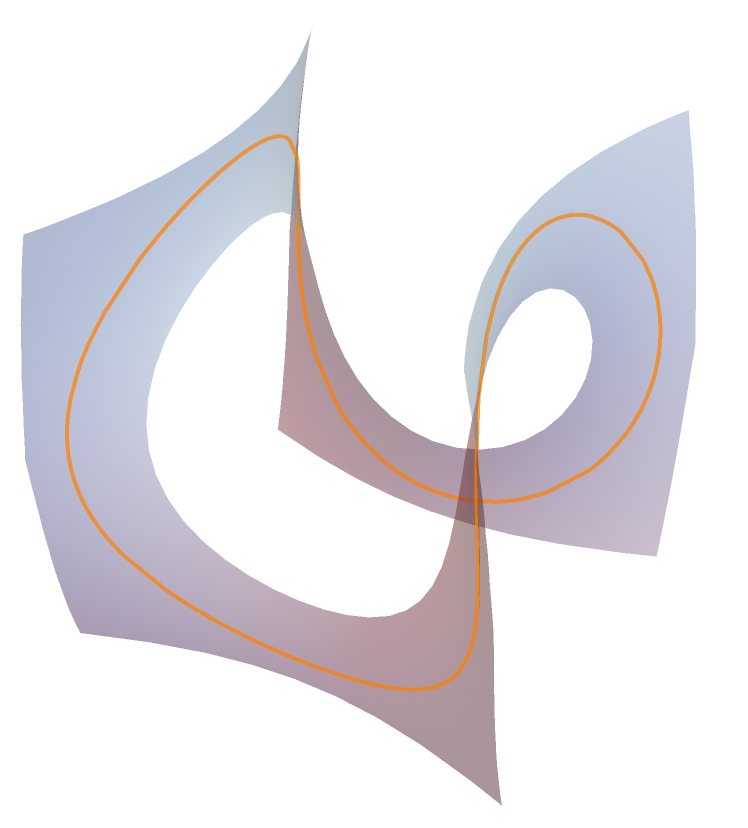} \hspace{1.5cm}
\includegraphics[width=.35\textwidth]{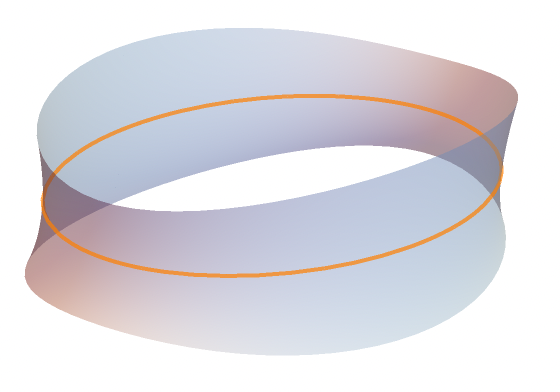}
\end{center}
\caption{ Solutions to \eqref{cauchy-psi} when $\dot{\varphi}\equiv1$   $(A(u)=\sin 2u, B(u) = \cos 2u)$ and $(A\equiv0, B(u) = 1)$ }
\end{figure}
 \begin{corollary}\label{eta-0} Let $\varphi$ be a real analytic function on an interval $I$. Then for any prescribed real analytic regular curve $\alpha: I_\varepsilon\rightarrow\R^3$ with  $\langle\alpha(0),\vec{e}_3\rangle\in I$, there exists a unique $\phie3$-minimal surface passing through $\alpha(t)$, $|t|<\varepsilon'$, with $\varepsilon'$ small enough,  such that, along $\alpha$,  $\eta =0$ and $d\eta \neq 0$.
 \end{corollary}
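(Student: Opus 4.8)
The plan is to reduce the statement to the Cauchy problem \eqref{cauchy-psi} solved in Theorem \ref{existence-psi}; the whole point is to bring the $1$-jet of $\alpha$ along the prospective singular curve into the canonical shape prescribed by that initial data. I first record the rigidity imposed by the two conditions. If a $\phie3$-minimal surface contains the trace of $\alpha$ and has $\eta\equiv0$ along it, then its unit normal $N$ is horizontal there; being also orthogonal to the tangent $\alpha'=(a_1,a_2,a_3)$, it is forced (wherever $\alpha'$ is not vertical) to be $N\circ\alpha=\pm(a_2,-a_1,0)/\sqrt{a_1^2+a_2^2}$. Moreover, by Theorem \ref{theorem-frontal}, the requirement $d\eta\neq0$ is equivalent to $(N\circ\alpha)'\neq0$; hence $\Gamma:=N\circ\alpha$ is a regular analytic curve tracing an arc of the unit circle of the horizontal plane. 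These observations both constrain $\alpha$ and pin down all the surface data along it.

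First I would reparametrize. Using the signed arc length of $\Gamma$ and fixing the orientation and the additive constant as in Proposition \ref{canonical-parametrization}, I obtain a parameter $u$ with $\Gamma(u)=(\cos u,-\sin u,0)$ near $u=0$; call the reparametrized curve $\gamma$. Since $\gamma'(u)\perp\Gamma(u)$, its horizontal part is a multiple of $(\sin u,\cos u,0)$, so putting
$$B(u):=\langle\gamma'(u),(\sin u,\cos u,0)\rangle,\qquad A(u):=\langle\gamma'(u),\vec{e}_3\rangle,$$
which are real analytic, gives $\gamma'(u)=(B\sin u,B\cos u,A)$ and $A^2+B^2=\|\gamma'\|^2>0$ by regularity of $\alpha$. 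Taking $(p_1,p_2,p_3):=\gamma(0)=\alpha(0)$, so that $p_3=\langle\alpha(0),\vec{e}_3\rangle\in I$, and noting $\Gamma(u)\wedge\gamma'(u)=(-A\sin u,-A\cos u,B)$, the triple $(A,B,(p_1,p_2,p_3))$ is exactly the Cauchy data of \eqref{cauchy-psi}.

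Existence is then immediate from Theorem \ref{existence-psi}: for $R$ small it yields a unique analytic conformally immersed $\phie3$-minimal surface $\psi:D_R\to\R^3$ with $\psi(u,0)=\gamma(u)$, angle function satisfying $\eta(u,0)=0$, $\eta_v(u,0)=1$ and $\eta\neq0$ off $\{v=0\}$; in particular it passes through the trace of $\alpha$ and has $\eta=0$, $d\eta\neq0$ along it. For uniqueness, let $\tilde\psi$ be another such surface. By the rigidity of the first paragraph its normal along $\alpha$ is the same $\Gamma$, so $\alpha$ is a non-degenerate singular curve of the Calabi partner $\tilde\psi^*$; hence $\tilde\psi$ admits the canonical conformal parametrization of Proposition \ref{canonical-parametrization}, whose uniqueness forces the same parameter $u$ and the same functions $A,B$. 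Thus $\tilde\psi$ solves the very same problem \eqref{cauchy-psi}, and the uniqueness clause of Theorem \ref{existence-psi} gives $\tilde\psi=\psi$ near $\alpha(0)$.

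I expect the reparametrization to be the delicate step. The content there is to justify that $d\eta\neq0$ genuinely makes $\Gamma$ a regular curve, so that its arc-length reparametrization is legitimate and real analytic, and, in the degenerate situations where the horizontal part of $\alpha'$ vanishes at isolated parameters (i.e. $B=0$, with $\alpha'$ momentarily vertical), to extend $\Gamma$ and the parameter change analytically across those points. Once an arbitrary regular analytic curve has been brought into the canonical form $\gamma'=(B\sin u,B\cos u,A)$, both halves of the corollary follow by a direct appeal to Theorem \ref{existence-psi} together with the uniqueness of the canonical parametrization of Proposition \ref{canonical-parametrization}.
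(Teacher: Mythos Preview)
Your proposal is correct and follows essentially the same route as the paper: existence via Theorem \ref{existence-psi} after bringing $\alpha$ into the canonical initial data of \eqref{cauchy-psi}, and uniqueness via the canonical conformal parametrization of Proposition \ref{canonical-parametrization} together with uniqueness in the Cauchy problem. You have simply spelled out the reparametrization step and the rigidity of $N$ along $\alpha$ that the paper leaves implicit (the paper's proof is one sentence citing Theorem \ref{existence-psi} and Propositions \ref{confpara}, \ref{canonical-parametrization}); your closing caveat about the points where $\alpha'$ is vertical is a genuine subtlety that the paper does not address either.
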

 \begin{proof}
 The existence follows from Theorem \ref{existence-psi} and the uniqueness comes from Propositions \ref{confpara}, \ref{canonical-parametrization} and the uniqueness in the Cauchy problem \eqref{cauchy-psi}.
 \end{proof}
 From Corollary \ref{eta-0}, we also have
 \begin{corollary}\label{nodal-unicity}
 Let $\varphi$ be a real analytic function on an interval $I$. If $\dot{\varphi}\neq 0$, then for any prescribed real analytic regular curve $\alpha: I_\varepsilon\rightarrow\R^3$ with  $\langle\alpha(0),\vec{e}_3\rangle\in I$, there exists a unique $\phie3$-minimal surface passing through $\alpha(t)$, $|t|<\varepsilon'$, with $\varepsilon'$ small enough,  such that, along $\alpha$,  $H =0$ and $K\neq 0$.
 \end{corollary}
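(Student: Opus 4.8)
The plan is to reduce this statement to Corollary \ref{eta-0} by translating the hypotheses ``$H=0$ and $K\neq0$ along $\alpha$'' into the conditions ``$\eta=0$ and $d\eta\neq0$ along $\alpha$'' appearing there. The bridge is the Euler--Lagrange equation \eqref{meancurvature}: pairing $\textbf{H}=\dot{\varphi}\,\vec{e}_3^{\,\perp}$ with the unit normal $N$ shows that the scalar mean curvature of a $\phie3$-minimal immersion satisfies $H=\dot{\varphi}\,\eta$, where $\eta=\langle N,\vec{e}_3\rangle$. Since we assume $\dot{\varphi}\neq0$, this already yields the pointwise equivalence $H=0\iff\eta=0$, so the mean curvature of such a surface vanishes exactly on the nodal set of its angle function.

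First I would work along the curve where $H=0$, so that $\eta=0$ and $\vec{e}_3$ is tangent to the surface. Writing $S$ for the shape operator of $\psi$ and using $dN=-S$, one has $d\eta(X)=\langle dN(X),\vec{e}_3\rangle=-\langle X,S\vec{e}_3\rangle$ for every tangent vector $X$; as $S\vec{e}_3$ is tangent, this gives $d\eta=0\iff S\vec{e}_3=0$ at such points, precisely the observation already used in the proof of Theorem \ref{theorem-frontal}.

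Next I would connect $K$ to $S\vec{e}_3$ through the fact that $H=0$ forces $S$ to be trace-free along $\alpha$. Its principal curvatures are then $\pm\kappa$, so $K=\det S=-\kappa^2$, and $S$ is either the zero operator (when $\kappa=0$) or invertible (when $\kappa\neq0$). Hence $K\neq0\iff S$ invertible $\iff S\vec{e}_3\neq0$: indeed $S\vec{e}_3=0$ with $\vec{e}_3\neq0$ forces $S$ to be singular, while a trace-free singular $S$ must vanish identically. Combined with the previous step, this gives, along $\alpha$, the equivalence $K\neq0\iff d\eta\neq0$.

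Putting the pieces together, for any $\phie3$-minimal surface containing $\alpha$ the pair of conditions ``$H=0$ and $K\neq0$ along $\alpha$'' is equivalent to ``$\eta=0$ and $d\eta\neq0$ along $\alpha$''; the existence and uniqueness claim is then exactly Corollary \ref{eta-0}, and the result follows. The proof carries no serious analytic obstacle, as all the hard work (the Cauchy problem and its unique solvability) is already contained in Theorem \ref{existence-psi} and Corollary \ref{eta-0}; the only point requiring a moment of care is the trace-free linear algebra of the third step establishing $S\vec{e}_3\neq0\iff K\neq0$, which is nevertheless elementary.
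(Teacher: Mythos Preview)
Your proposal is correct and follows the same approach as the paper, which simply states that the result follows from Corollary \ref{eta-0}; you have made explicit the bridge the paper leaves implicit, namely that for a $\phie3$-minimal surface with $\dot{\varphi}\neq0$ the relation $H=\dot{\varphi}\,\eta$ gives $H=0\iff\eta=0$, and along this set the trace-free shape operator satisfies $K\neq0\iff S\vec{e}_3\neq0\iff d\eta\neq0$.
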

Take $\psi=(x,y,z)$ as in the statement of Theorem \ref{existence-psi} and consider now any solution $\psi^*=(x^*,y^*,z^*)$  of \eqref{CR-*} on $D_R$. Then $(\psi, \psi^*):D_R\rightarrow \R^3\times\mathbb{L}^3$ is a Calabi's pair of  real analytic weight $\varphi$.
Moreover, from \eqref{CR-*}, Proposition \ref{confpara}, \eqref{cauchy-psi}, Proposition \ref{canonical-parametrization} and Theorem \ref{existence-psi}, the singular set $\{v=0\}$ of $\psi^*$ consists only of non-degenerate singular points and the following expressions hold:
\begin{align*}
\psi^*_u(u,0)&= A^*(u) (\cos u,  -\sin u, 1),\\
\psi^*_v(u,0) &= B^*(u)( \cos u, - \sin u, 1),
\end{align*}
where $A^*,B^*\in{\cal C}^\omega(I_R)$ are as in Proposition \ref{canonical-parametrization}.

Up to an appropriate translation, we can also  assume that for any fixed point $(p_1^*,p_2^*,p_3^*)\in \mathbb{L}^3$,
$$\psi^*(u,0)= (p_1^*,p_2^*,p_3^*)+ \int_0^uA^{*}(t) (\cos t,  -\sin t, 1)dt,$$ and from \eqref{system-psi*} we have that $\psi^*=(x^*,y^*,z^*)$ is the unique solution to the following Cauchy problem,
\begin{equation}\label{cauchy-psi*}
\left\{
\begin{array}{l}
\Delta \psi^* =  \dot{\varphi^*}(z^*) (\langle \nabla x^*,\nabla z^*\rangle,\langle \nabla y^*,\nabla z^*\rangle, |\nabla x^*|^2+ |\nabla y^*|^2+ |\nabla z^*|^2),\vspace{.1in}\\
\psi^*(u,0)  = p^* +\displaystyle  \int_0^uA^*(t)( \cos t,  -\sin t, 1) dt\vspace{.1in}\\
\psi^*_v(u,0) = B^*(u) ( \cos u, -\sin u, 1),
\end{array}
\right.
\end{equation}
where $\varphi^*(z^*)=\varphi(z)$, $z^* =p^*_3 + \displaystyle \int_{p_3}^z e^{\varphi(t)} dt$.
As a consequence, we have 
\begin{theorem}\label{existence-psi*} If $\varphi^*$ is real analytic on an interval $I^*$ and $p^*=(p_1^*,p_2^*,p_3^*)\in \mathbb{L}^3$ with $p_3^* \in I^*$, then for small enough $R$ there is a unique solution $\psi^*:D_R\rightarrow \mathbb{L}^3$ to the problem \eqref{cauchy-psi*} satisfying that $\psi^*$ is a spacelike $[\varphi^*,\vec{e}_3]$-maxface whose singular set $\{v=0\}$ consists only of non-degenerate singular points. 
\end{theorem}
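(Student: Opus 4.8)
The plan is to derive this from the existence-and-uniqueness Theorem \ref{existence-psi} for the Euclidean partner together with the Calabi correspondence of Section \ref{sec-2}; the whole argument is a translation across that correspondence. The data actually fixed by \eqref{cauchy-psi*} are the weight $\varphi^*$, the point $p^*$ and two analytic functions $A^*,B^*$ with $A^{*2}+B^{*2}\neq 0$, so the first task is to manufacture the corresponding minimal data. First I would recover $\varphi$: imposing $\varphi^*\circ z^*=\varphi\circ z$ and $dz^*=e^{\varphi}dz$ as in \eqref{relations} gives $dz/dz^*=e^{-\varphi^*(z^*)}$, so for any base height $p_3$ the map $z^*\mapsto z(z^*)=p_3+\int_{p_3^*}^{z^*}e^{-\varphi^*(s)}\,ds$ is a real-analytic increasing diffeomorphism onto an interval $I\ni p_3$, and $\varphi(z):=\varphi^*(z^*(z))\in{\cal C}^\omega(I)$. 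Next I recover $(A,B)$: Proposition \ref{canonical-parametrization} forces $A^*=e^{\varphi\circ a}A$, $B^*=e^{\varphi\circ a}B$ with $a'=A$, $a(0)=p_3$, and inverting this is exactly the scalar analytic ODE $a'(u)=e^{-\varphi(a(u))}A^*(u)$, $a(0)=p_3$, which has a unique real-analytic solution on a small interval. Setting $A:=a'$ and $B:=e^{-\varphi\circ a}B^*$ yields $A,B\in{\cal C}^\omega$ with $A^2+B^2=e^{-2\varphi\circ a}(A^{*2}+B^{*2})>0$, precisely the hypotheses of Theorem \ref{existence-psi}.

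For existence I would apply Theorem \ref{existence-psi} with weight $\varphi$, data $(A,B)$ and base point $p=(0,0,p_3)$, obtaining a unique conformal $\phie3$-minimal immersion $\psi:D_R\to\R^3$ whose angle function satisfies $\eta(u,0)=0$, $\eta_v(u,0)=1$ and $\eta(u,v)\neq 0$ for $v\neq 0$. Integrating \eqref{CR-*} on the simply connected disk $D_R$ (Proposition \ref{integrability-sys}) and fixing the integration constant so that $\psi^*(0)=p^*$ produces the Calabi partner $\psi^*$, which by the computations of Section \ref{sec-2} is a spacelike $\*phie3$-maxface: conformality \eqref{conformal} and the non-branching condition \eqref{eq:branchpoints} follow from \eqref{eq:relation-EE*} and from $|x^*_\xi|^2+|y^*_\xi|^2+|z^*_\xi|^2=\tfrac12 e^{2\varphi}E>0$. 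By \eqref{eq:relation-EE*} its singular set is $\{\eta=0\}=\{v=0\}$, and since $\eta_v(u,0)=1\neq 0$ Theorem \ref{theorem-frontal} shows every such point is non-degenerate. Finally, the initial values $\psi^*_u(u,0)=A^*(u)(\cos u,-\sin u,1)$ and $\psi^*_v(u,0)=B^*(u)(\cos u,-\sin u,1)$ read off from Proposition \ref{canonical-parametrization}, together with $\psi^*(0)=p^*$, show that $\psi^*$ solves \eqref{cauchy-psi*}.

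For uniqueness I would run the correspondence backwards. Given any $\*phie3$-maxface $\widetilde\psi^*$ solving \eqref{cauchy-psi*}, Remark \ref{r-3} produces through \eqref{CR-psi} a $\phie3$-minimal partner $\widetilde\psi$. Transporting the prescribed initial data of $\widetilde\psi^*$ across \eqref{CR-psi} and the recovery above, one checks that $\widetilde\psi$ solves the Euclidean Cauchy problem \eqref{cauchy-psi} with the same $(\varphi,A,B,p)$; the uniqueness clause of Theorem \ref{existence-psi} then forces $\widetilde\psi=\psi$, whence $\widetilde\psi^*=\psi^*$ by \eqref{CR-*} and the normalization $\widetilde\psi^*(0)=p^*$.

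The main obstacle is the data-recovery step: one must check that the passage $(A^*,B^*)\mapsto(A,B)$ through the analytic ODE for $a$ is well defined, real analytic and preserves $A^2+B^2>0$, and---more delicately---that the initial conditions transported through \eqref{CR-*} and \eqref{CR-psi} match exactly the prescribed data on both sides, so that the Euclidean and Lorentzian Cauchy problems are genuinely equivalent. All the analytic heavy lifting, however, is already carried by Theorem \ref{existence-psi}, so the present statement reduces to a bookkeeping translation across the Calabi correspondence.
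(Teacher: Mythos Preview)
Your proposal is correct and follows essentially the same route as the paper: the paper also derives Theorem \ref{existence-psi*} as a consequence of Theorem \ref{existence-psi} by building $\psi$ first and then passing to $\psi^*$ via the Calabi correspondence \eqref{CR-*}, with the singular set and its non-degeneracy read off from the angle function of $\psi$. The only differences are presentational: you make the data--recovery step $(A^*,B^*,\varphi^*)\mapsto(A,B,\varphi)$ explicit via the ODE $a'=e^{-\varphi\circ a}A^*$, whereas the paper leaves that inversion implicit, and you argue uniqueness by pulling back through \eqref{CR-psi} to invoke the uniqueness clause of Theorem \ref{existence-psi}, while the paper simply appeals to the well-posedness of the analytic Cauchy problem \eqref{cauchy-psi*} directly.
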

\begin{definition} {\rm  The curve $\mathbf{a^*}(u)= A^*(u)( \cos u,  -\sin u, 1)$ (respectively,   $\mathbf{b^*}(u)=B^*(u) ( \cos u, -\sin u, 1)$) will be called  null curve of singular directions (respectively,   limit null curve) of $\psi^*$ in the canonical conformal parametrization around the non-degenerate singular point.}\end{definition}
We already have all the necessary ingredients to prove the Theorem \ref{Th-A}, the following result below contains in a more specific way all the statements of such Theorem:
\begin{theorem}[Classification of non-degenerate singularities]\label{classification-singularities}Let $I^*\subseteq \R$  be an open interval,  $\varphi^*\in {\cal C}^\omega(I^*)$ and $p^*\in \mathbb{L}^3$ with $-\ll p^*,\vec{e}_3\gg \in I^*$. 

Consider ${\cal M}_1$ the class of spacelike $\*phie3$-maxfaces $\psi^*: \Sigma\rightarrow\mathbb{L}^3$ such that $p\in\Sigma$ is a non-degenerate singular point of $\psi^*$ with $\psi^*(p)=p^*$ and take 
${\cal M}_2(\varepsilon)=\{ (A^*,B^*)\in {\cal C}^\omega(I_\varepsilon)\times{\cal C}^\omega(I_\varepsilon) | \ \ A^{*2 }+ B^{*2} \neq 0\}$. Then, for $\varepsilon$ small enough,  the map $\Upsilon$ that sends $$\Upsilon: \psi^*\in {\cal M}_1\longrightarrow (-\ll \mathbf{a^*},\vec{e}_3\gg, -\ll \mathbf{b^*},\vec{e}_3\gg)\in {\cal M}_2(\varepsilon),$$ defines a bijective correspondence between ${\cal M}_1$ and ${\cal M}_2(\varepsilon)$, where $\mathbf{a^*}$ and $\mathbf{b^*}$ are, respectively,  the null curve of singular directions and the limit null curve of $\psi^*$  in the canonical conformal parametrization around $p$.
\end{theorem}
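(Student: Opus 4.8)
The plan is to prove that $\Upsilon$ is a bijection by reducing the whole statement to the well-posedness of the analytic Cauchy problem \eqref{cauchy-psi*} (Theorem \ref{existence-psi*}) together with the uniqueness of the canonical conformal parametrization (Proposition \ref{canonical-parametrization}). First I would check that $\Upsilon$ is well defined and lands in ${\cal M}_2(\varepsilon)$. Given $\psi^*\in{\cal M}_1$, Proposition \ref{canonical-parametrization} provides, for $\varepsilon$ small enough, a \emph{unique} canonical conformal parametrization $\zeta$ around the non-degenerate singular point $p$, with $\zeta(p)=0$, in which $\psi^*_u(u,0)=A^*(u)(\cos u,-\sin u,1)$ and $\psi^*_v(u,0)=B^*(u)(\cos u,-\sin u,1)$ for uniquely determined $A^*,B^*\in{\cal C}^\omega(I_\varepsilon)$ satisfying $A^{*2}+B^{*2}=e^{2\varphi\circ a}(A^2+B^2)\neq0$. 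Since $\ll(a,b,c),\vec{e}_3\gg=-c$, the null curve of singular directions $\mathbf{a^*}(u)=A^*(u)(\cos u,-\sin u,1)$ and the limit null curve $\mathbf{b^*}(u)=B^*(u)(\cos u,-\sin u,1)$ satisfy $-\ll\mathbf{a^*},\vec{e}_3\gg=A^*$ and $-\ll\mathbf{b^*},\vec{e}_3\gg=B^*$, so $\Upsilon(\psi^*)=(A^*,B^*)\in{\cal M}_2(\varepsilon)$, and the uniqueness in Proposition \ref{canonical-parametrization} guarantees that this value is independent of any choices.

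For injectivity, suppose $\psi_1^*,\psi_2^*\in{\cal M}_1$ satisfy $\Upsilon(\psi_1^*)=\Upsilon(\psi_2^*)=(A^*,B^*)$. Expressed in their respective canonical conformal parametrizations, both are spacelike maxfaces and hence solve the system \eqref{system-psi*}; moreover their initial data along $\{v=0\}$ coincide, being prescribed by the same $A^*,B^*$ and the same value $\psi_i^*(p)=p^*$ (note $-\ll p^*,\vec{e}_3\gg=p^*_3\in I^*$ places the problem on a legitimate domain for $\varphi^*$). Thus both solve the same Cauchy problem \eqref{cauchy-psi*}, and the uniqueness part of Theorem \ref{existence-psi*} forces $\psi_1^*=\psi_2^*$ on a common disk $D_R$; by analyticity they then agree as germs, equivalently on their connected domains, around the singularity, so $\Upsilon$ is injective.

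For surjectivity, given $(A^*,B^*)\in{\cal M}_2(\varepsilon)$, Theorem \ref{existence-psi*} produces a spacelike $[\varphi^*,\vec{e}_3]$-maxface $\psi^*:D_R\to\mathbb{L}^3$ solving \eqref{cauchy-psi*} with $\psi^*(0)=p^*$ and with singular set $\{v=0\}$ consisting only of non-degenerate points, so $\psi^*\in{\cal M}_1$ with singular point $p=0$. It remains to verify that $\Upsilon(\psi^*)=(A^*,B^*)$, i.e. that the parameter underlying \eqref{cauchy-psi*} is exactly the canonical one. Here I would invoke the minimal partner: by Remark \ref{r-3} there is a $[\varphi,\vec{e}_3]$-minimal $\psi$ solving \eqref{cauchy-psi} with $A^*=e^{\varphi\circ a}A$, $B^*=e^{\varphi\circ a}B$, and by Theorem \ref{existence-psi} its angle function satisfies $\eta(u,0)=0$, $\eta_v(u,0)=1$, while its normal along $\{v=0\}$ realizes the arc-length normalization $N(u,0)=(\cos u,-\sin u,0)$ built into the construction preceding Proposition \ref{canonical-parametrization}. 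Since the canonical parametrization is characterized precisely by this normalization together with its uniqueness, the parameter of \eqref{cauchy-psi*} must coincide with it, whence $\Upsilon(\psi^*)=(A^*,B^*)$.

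The main obstacle I expect is this last verification in the surjectivity step: confirming that the conformal parameter dictated by the Cauchy data \eqref{cauchy-psi*} is genuinely the canonical one and not merely \emph{some} conformal parameter with singular set $\{v=0\}$. This hinges entirely on the arc-length normalization $N(u,0)=(\cos u,-\sin u,0)$ of the Gauss map of the minimal partner and hence on the uniqueness assertion of Proposition \ref{canonical-parametrization}. A secondary technical point is the uniformity of the radii: one must fix a single $\varepsilon$ (shrinking $R$ if necessary) so that $\Upsilon$ and its inverse are simultaneously defined on all of ${\cal M}_1$ and ${\cal M}_2(\varepsilon)$, and interpret the elements of ${\cal M}_1$ as germs around $p$, so that analytic continuation promotes local coincidence to the asserted bijection.
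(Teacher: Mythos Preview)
Your proposal is correct and follows essentially the same route as the paper: well-definedness via Proposition~\ref{canonical-parametrization}, injectivity via uniqueness in the Cauchy problem~\eqref{cauchy-psi*}, and surjectivity via Theorem~\ref{existence-psi*}. The paper's proof is terser---it simply asserts that the solution of~\eqref{cauchy-psi*} satisfies $\Upsilon(\psi^*)=(A^*,B^*)$---whereas you explicitly justify that the parameter in~\eqref{cauchy-psi*} is the canonical one by checking $N(u,0)=(\cos u,-\sin u,0)$; this is exactly the right verification, and your concern about it being the ``main obstacle'' is well placed but fully resolved by the arc-length normalization built into the Cauchy data.
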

\begin{proof}
It is clear that $\Upsilon$ is well-defined. Let us see now that it is bijective.

Surjectivity follows from Theorem \ref{existence-psi*}. Indeed,  if  $(A^*,B^*)\in {\cal M}_2(\varepsilon)$, then from Theorem \ref{existence-psi*} the solution to the problem \eqref{cauchy-psi*} gives a spacelike $\*phie3$-maxface $\psi^*:D_R\rightarrow\mathbb{L}^3$ with a non-degenerate singular point at the origin, satisfying  $\psi^*(0,0)=p^*$ and $\Upsilon(\psi^*)= (A^*,B^*)$.

To conclude the proof, we prove the injectivity. Let $\psi^*_1,\psi^*_2\in {\cal M}_1$ such that $\Upsilon(\psi^*_1)=\Upsilon(\psi^*_2)$ and consider $\psi^*_1(u,v)$, $\psi_2^*(u,v)$, $(u,v)\in D_\varepsilon$ their canonical conformal parametrizations around $p$, then on $D_\varepsilon$,  $\psi^*_1$ and  $\psi^*_2$ are solutions of the same Cauchy problem \eqref{cauchy-psi*} and,  by the uniqueness in \eqref{cauchy-psi*}, we have that $\psi^*_1=\psi^*_2$ on $D_\varepsilon$.
\end{proof}
 \begin{figure}[h]
\begin{center}
\includegraphics[width=.40\textwidth]{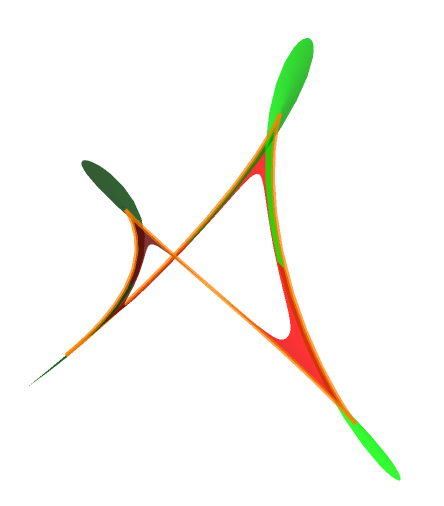} \end{center}
\caption{ Solution to \eqref{cauchy-psi*} when $\dot{\varphi}=1/z^*$,  $A^*(u)=\sin 2u$, $ B^*(u) = \cos 2u)$  }
\end{figure}
From Theorem \ref{theorem-criteria},  Proposition \ref{canonical-parametrization} and Theorem \ref{classification-singularities} we obtain
\begin{proposition} \label{type-singularity}Let $\psi^*\in {\cal M}_2$ such that $\Upsilon(\psi^*)=(A^*,B^*)$. Then
\begin{enumerate}[a)]
\item $B^* \neq 0$ if and only if $\psi^*$ is a front, which is equivalent to the limit null curve being spacelike. In this case,
\begin{enumerate}[i)]
\item  $A^*(0) \neq 0$ if and only if $p$ is a cuspidal edge.
\item $A^*(0) = 0$ and $(A^*)'(0) \neq 0$ if and only if $p$ is a swallowtail edge.
\end{enumerate}
\item We have a cuspidal cross  cap at $p$ if and only if $B^*(0)= 0$ and $(B^*)'(0)\neq 0$.
\end{enumerate}
\end{proposition}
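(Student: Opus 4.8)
The plan is to reduce everything to the explicit data of the canonical conformal parametrization of Proposition \ref{canonical-parametrization} and then feed it into the geometric criteria of Theorem \ref{theorem-criteria}, which are phrased in terms of the partner curve $\gamma=\psi\circ\alpha$. In that parametrization the singular curve is $\alpha(u)=\zeta^{-1}(u,0)$, so that $\gamma'(u)=d\psi(\partial_u)=\psi_u(u,0)=(B(u)\sin u, B(u)\cos u, A(u))$, and the entire classification becomes a matter of reading off a few elementary vector identities in $u$.

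First I would compute the quantities that enter Theorem \ref{theorem-criteria}. Directly, $\langle\gamma'(u),\vec{e}_3\rangle=A(u)$ and $\gamma'(u)\wedge\vec{e}_3=B(u)(\cos u,-\sin u,0)$, whence $\gamma'(0)\wedge\vec{e}_3\neq0$ if and only if $B(0)\neq0$. Differentiating once more, $\langle\gamma''(0),\vec{e}_3\rangle=A'(0)$ and $\gamma''(0)\wedge\vec{e}_3=(B'(0),-B(0),0)$, so when $B(0)=0$ the latter is nonzero exactly when $B'(0)\neq0$. Substituting these into parts a), a)i), a)ii) and b) of Theorem \ref{theorem-criteria} yields the stated dichotomies, but phrased in terms of $A$ and $B$ rather than $A^*$ and $B^*$.

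Next I would transfer the conditions from $(A,B)$ to $(A^*,B^*)$. By Proposition \ref{canonical-parametrization}, $A^*=e^{\varphi\circ a}A$ and $B^*=e^{\varphi\circ a}B$ with the weight factor $e^{\varphi\circ a}$ real analytic and strictly positive. Hence $B\neq0\iff B^*\neq0$ and $A(0)\neq0\iff A^*(0)\neq0$; moreover, by the product rule, at a zero of $A$ (resp.\ of $B$) the derivative of the starred function reduces to $e^{\varphi\circ a}$ times the derivative of the unstarred one, so $A(0)=0,\,A'(0)\neq0\iff A^*(0)=0,\,(A^*)'(0)\neq0$, and likewise for $B$. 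This converts the classification into its final form. One small point to handle cleanly is the distinction between $\psi^*$ being a front at $p$, governed by $B^*(0)\neq0$, and being a front along the whole singular curve, governed by $B^*$ being nowhere vanishing; by continuity these differ only by shrinking $\varepsilon$.

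Finally I would verify the geometric reformulation in a): that $B^*\neq0$ is equivalent to the limit null curve $\mathbf{b^*}(u)=B^*(u)(\cos u,-\sin u,1)$ being spacelike. Writing $n(u)=(\cos u,-\sin u,1)$ and $n'(u)=(-\sin u,-\cos u,0)$, one checks $\ll n,n\gg=0$, $\ll n',n'\gg=1$ and $\ll n,n'\gg=0$, so that $\mathbf{b^*}{}'=(B^*)'\,n+B^*\,n'$ gives $\ll\mathbf{b^*}{}',\mathbf{b^*}{}'\gg=(B^*)^2$; the tangent is therefore spacelike precisely where $B^*\neq0$. The work here is entirely elementary, and I expect no genuine analytic obstacle: the existence and uniqueness content has already been absorbed into Theorem \ref{classification-singularities} and into the canonical parametrization, so the only thing requiring care is the bookkeeping of signs and orientation conventions in the cross and Lorentzian cross products, together with the front neighborhood-versus-point issue noted above.
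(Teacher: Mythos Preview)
Your proposal is correct and follows exactly the approach indicated in the paper, which simply records the proposition as a direct consequence of Theorem \ref{theorem-criteria}, Proposition \ref{canonical-parametrization} and Theorem \ref{classification-singularities}. Your explicit computations of $\langle\gamma',\vec{e}_3\rangle$, $\gamma'\wedge\vec{e}_3$, their derivatives, the transfer from $(A,B)$ to $(A^*,B^*)$ via the nonvanishing factor $e^{\varphi\circ a}$, and the Lorentzian norm of $\mathbf{b^*}{}'$ are all accurate and constitute precisely the bookkeeping the paper leaves to the reader.
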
 

\begin{remark} If $A^*=0$ and $B^*$ is a non-zero constant, we obtain a rotationally symmetric conelike singularity (see Figure \ref{fig:conelike-helicoidal}-left).

If $A^*$ and $B^*$ are non-zero constants,  the examples obtained will be called helicoidal-type (see Figure \ref{fig:conelike-helicoidal}-right).
\end{remark}
 \begin{figure}[h] \label{fig:conelike-helicoidal}
\begin{center}
\includegraphics[width=.40\textwidth]{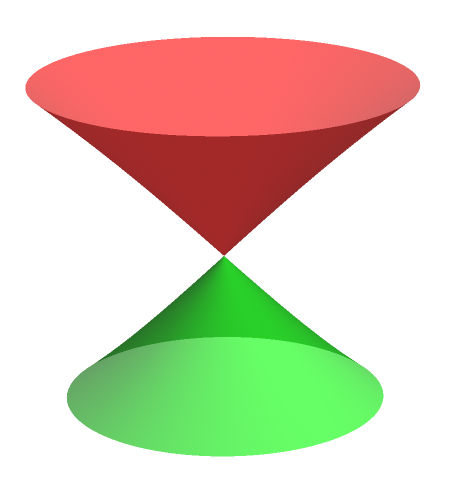} \hspace{1cm}
\includegraphics[width=.25\textwidth]{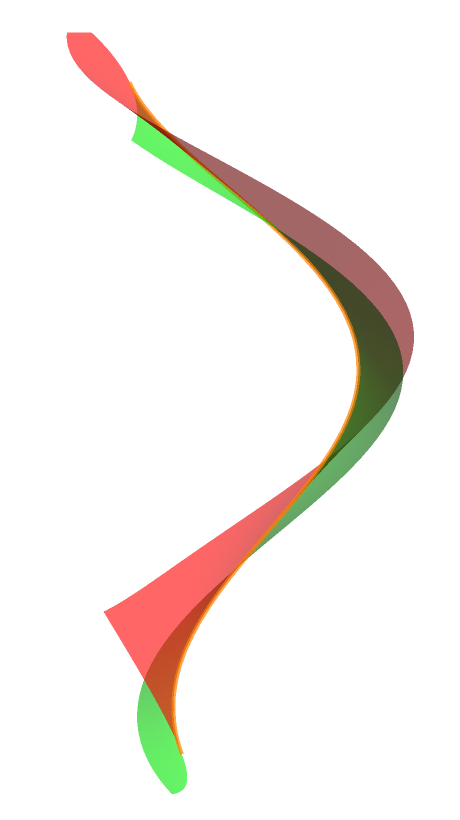} 
\end{center}
\caption{ Solution to \eqref{cauchy-psi*} with $\dot{\varphi}=1/z^*$,  left: $A^*\equiv0,B^*\equiv1$, right: $A*= B^* \equiv1$  }
\end{figure}
\section{Conelike singularities}\label{sec-5}
 In this section we study conelike singularities in the family of spacelike $\*phie3$-maximal surfaces in $\mathbb{L}^3$. To be more precise, we study non-removable isolated singularities of the following PDE:
 \begin{equation}\label{eq-graph*}
 (1-z^{*2}_{y^*})z^*_{x^*x^*}+  (1-z^{*2}_{x^*})z^*_{y^*y^*} + 2 z^*_{x^*} z^*_{y^*} z^*_{x^*y^*} = \dot{\varphi}^*(z^*) (1 - z^{*2}_{x^*}-z^{*2}_{y^*}), 
 \end{equation}
on $D^*_\rho$, where $\varphi^*\in {\cal C}^k(I^*)$, $k\geq 2$, for some open real interval $I^*$, $z^*\in {\cal C}^2(D^*_\rho)$ satisfies the ellipticity condition $1 > z^{*2}_{x^*}+z^{*2}_{y^*}$,
 $$ D^*_\rho=\{(x^*,y^*)\in \R^2\  |\  0<x^{*2} + y^{*2}<\rho^2\}.$$
 and $\rho$ is chosen such that $z^*$ is also ${\cal C}^2 $ at  the exterior boundary of $D^*_\rho$.

 The graph  of a solution $z^*$  to \eqref{eq-graph*} with a non-removable singularity at the origin is a 
$\*phie3$-maximal  surface or equivalently a maximal surface in $\R^3$ endowed with the Lorentzian metric 
$$\langle\cdot,\cdot\rangle^*= e^{-\varphi^*}\ll\cdot,\cdot\gg.$$ In consequence (see \cite{B}) $z^*$ extends continuously to the origin ($ z^*(0,0)=z_0^*$) and $$ \lim_{(x^*,y^*)\rightarrow(0,0)}\frac{|z^*(x^*,y^*)|}{\sqrt{x^{*2}+y^{*2}}}= 1,$$
which says that $z^*$ is asymptotic at the origin to the upper ($\mathbb{N}^2_+$) or lower ($\mathbb{N}^2_-$) light cone $\mathbb{N}^2=\{p\in \mathbb{L}^3\,| \, \ll p,p\gg=0\} $ of $\mathbb{L}^3$.

\

Arguing as in \cite[Lemma 1]{GJM-IHP} we can prove the following asymptotic behavior of the gradient of $z^*$,
\begin{proposition}\label{proper}
If the Gaussian curvature of the graph $(x^*,y^*,z^*(x^*,y^*))$ in $\mathbb{L}^3$ does not vanish around the singularity and  the origin is a non-removable singularity of $z^*$, then there exist a punctured neighborhood ${\cal D}^*$ of the origin in $D^*_\rho$ and $R>0$ such that 
$$ \mu : {\cal D}^*\rightarrow \mathbb{A}=\{(p^*,q^*)\, | \,  R^2<p^{*2} + q^{*2}<1\}, \quad \mu(x^*,y^*)=(z^*_{x^*},z^*_{y^*})$$
is a ${\cal C}^2$ diffeomorphism.
\end{proposition}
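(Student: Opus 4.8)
The plan is to exhibit $\mu=(z^*_{x^*},z^*_{y^*})$ as a proper local diffeomorphism onto the annulus $\mathbb{A}$ and then to identify its covering degree as $1$, which is the same as global injectivity. First I would record that the Jacobian of $\mu$ is exactly the Hessian determinant $z^*_{x^*x^*}z^*_{y^*y^*}-(z^*_{x^*y^*})^2$. Writing $W=\sqrt{1-z^{*2}_{x^*}-z^{*2}_{y^*}}>0$ for the positive factor supplied by the ellipticity hypothesis, a direct computation of the first and second fundamental forms of the graph with respect to the timelike unit normal $N^*=\tfrac{1}{W}(z^*_{x^*},z^*_{y^*},1)$ gives $\det I=W^2$ and $\det II=\det(\mathrm{Hess}\,z^*)/W^2$, hence $K^*=-\det(\mathrm{Hess}\,z^*)/W^4$. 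Thus the Jacobian of $\mu$ equals $-K^*W^4$, which by hypothesis never vanishes on a punctured neighborhood of the origin and, being continuous on a connected set, has constant sign. So $\mu$ is a local $C^2$ diffeomorphism there with everywhere constant orientation, and its image lies in the open unit disk since $|\nabla z^*|<1$.

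Next I would pin down the boundary behavior. Since $z^*$ is $C^2$ up to the exterior circle $x^{*2}+y^{*2}=\rho^2$, the quantity $w:=|\nabla z^*|^2$ satisfies $\sup_{\partial D^*_\rho}w<1$. On the other hand, the asymptotic-to-the-light-cone property recalled just before the statement, combined with the gradient control available for solutions of \eqref{eq-graph*}, forces $w\to1$ as $(x^*,y^*)\to0$; that is, for every $\delta>0$ there is a punctured disk on which $w>1-\delta$. I would then fix $R$ with $\sup_{\partial D^*_\rho}w<R^2<1$ and take ${\cal D}^*$ to be the connected component of $\{w>R^2\}$ whose closure contains the origin. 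By construction $R^2<w<1$ on ${\cal D}^*$, so $\mu({\cal D}^*)\subseteq\mathbb{A}$, and properness onto $\mathbb{A}$ follows: the $\mu$-preimage of a compact set $\{R^2+\epsilon\le p^{*2}+q^{*2}\le1-\epsilon\}$ lies in $\{R^2+\epsilon\le w\le1-\epsilon\}$, which stays away both from the origin (where $w\to1$) and from the level curve $\{w=R^2\}$, hence is compact in ${\cal D}^*$.

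With these two ingredients $\mu:{\cal D}^*\to\mathbb{A}$ is a proper local diffeomorphism between connected surfaces, hence a finite-sheeted covering map; in particular it is automatically surjective onto $\mathbb{A}$, and ${\cal D}^*$ is a topological annulus. It then remains to show that the degree is $1$. Here I would compute the winding number of $\mu$ along a small loop around the origin: the cone-asymptotics yield $\nabla z^*\sim\pm(x^*,y^*)/\sqrt{x^{*2}+y^{*2}}$ near the singularity, whose rotation index about the origin is $\pm1$, and since the Jacobian of $\mu$ has constant sign this fixes the covering degree to be $1$. Therefore $\mu$ is a bijective local diffeomorphism, i.e.\ a $C^2$ diffeomorphism onto $\mathbb{A}$.

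I expect the genuine difficulty to be concentrated in the asymptotic analysis at the non-removable singularity: establishing the uniform limit $|\nabla z^*|\to1$ and the precise leading behavior $\nabla z^*\sim\pm(x^*,y^*)/\sqrt{x^{*2}+y^{*2}}$ needed to evaluate the winding number. This is precisely where the non-vanishing of $K^*$ and the asymptotic-to-the-cone structure are used, and where I would follow the estimates of \cite[Lemma 1]{GJM-IHP}; the remaining covering-space and degree arguments are then formal.
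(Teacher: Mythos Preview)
The paper does not actually supply a proof of this proposition; it simply writes ``Arguing as in \cite[Lemma 1]{GJM-IHP} we can prove the following\ldots'' and states the result. Your proposal is therefore not competing with a written proof but rather unpacking what that one-line reference plausibly means.

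Your outline is sound and follows what one would expect from the cited reference: the Jacobian computation ($\det(\mathrm{Hess}\,z^*)=-K^*W^4$) is correct for a spacelike graph in $\mathbb{L}^3$, the construction of ${\cal D}^*$ as a superlevel set of $w=|\nabla z^*|^2$ is the natural one, and the passage from ``proper local diffeomorphism'' to ``finite covering'' to ``degree one via winding number'' is standard. You have also correctly isolated where the genuine analytic work lies --- the limits $|\nabla z^*|\to1$ and $\nabla z^*\sim\pm(x^*,y^*)/|(x^*,y^*)|$ at the puncture --- and you defer these to the same source the paper does. So your proposal and the paper's (non-)proof are in complete agreement on both the architecture and the location of the difficulty; you have simply written out the topological scaffolding that the paper leaves implicit.

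One small caution: the assertion $\nabla z^*\sim\pm(x^*,y^*)/|(x^*,y^*)|$ does not follow from $|z^*|/|(x^*,y^*)|\to1$ alone; it requires the sharper estimates from \cite{GJM-IHP} (or an argument using the non-vanishing Hessian to rule out oscillation of the gradient direction). You flag this yourself, so this is not a gap so much as a reminder that the cited lemma really is doing the heavy lifting.
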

On  $ D^*_\rho$ we may consider the conformal structure determined by the induced metric $ds^{*2}$ of $\mathbb{L}^3$. 
We know from the uniformization theorem in \cite{S} that there exists a domain, $\Omega\subseteq\R^2=\mathbb{C}$, conformally equivalent to either a punctured disk or a certain annulus such that the change of parameters between $(x^*,y^*)$ and the conformal coordinates $(u,v)$,
$$ \Phi^*: D^*_\rho \rightarrow \Omega, \quad (x^*,y^*)\rightarrow (u(x^*,y^*),v(x^*,y^*))$$ is a ${\cal C}^2$-diffeomorphism with   positive Jacobian and such that via ${(\Phi^*)}^{-1}$ the following Beltrami system is satisfied
\begin{equation}\label{beltrami-*}
y^*_\xi = \frac{\beta^*- i \sqrt{W^*}}{\alpha^*} x^*_\xi, 
\end{equation}
where $ \xi=u+iv$, $\beta^*= z^*_{x^*}z^*_{y^*}$, $\alpha^*=1-z^{*2}_{y^*}$, $\gamma^*=1-z^{*2}_{x^*}$, $W^* = \alpha^* \gamma^* - \beta^{*2}$.

From Definition \ref{maxfaces} and Remark \ref{r-2}, the map 
$$\psi^*(u,v)=(x^*(u,v), y^*(u,v),z^*(u,v))$$ satisfies the elliptic system \eqref{system-psi*} and  if $\varphi^*$ is a solution of \eqref{punto-*} for some function $\phi^*$ then, from Theorem \ref{weierstrass-*} and Remark \ref{r-6}, the Gauss map $g$ of $\psi^*$ and $\phi^*$ satisfy the complex elliptic PDE \eqref{int-phi*}. Moreover, $\psi^*$ can be recovered,  in terms of $g$ and $\phi^*$ from the equations  \eqref{weierstrass-phi*} and \eqref{varphi-*}

In what follows, we will discuss separately the cases in which $(\Omega,d{s^*}^2)$ is conformally equivalent to either an annulus or a punctured disk.

\subsection{$(\Omega,d{s^*}^2)$ is conformally equivalent to an annulus}
In this case, we may assume that the map $\psi^*$ is well-defined on $$\psi^*:{\cal A}_\varepsilon/(2\pi\mathbb{Z})\rightarrow \mathbb{L}^3, \quad u+iv \rightarrow \psi^*(u,v),$$
where ${\cal A}_\varepsilon= \{\xi\in \mathbb{C}\, | \, 0<{\rm Im}\, \xi < \varepsilon\}$
and such that $\psi^*$ extends continuously to $\R/(2\pi\mathbb{Z})$ as $\psi^*(u,0)=(0,0,z_0^*)$. We will denote by $\widetilde{{\cal A}}_\varepsilon$ the union of $ {\cal A}_\varepsilon /(2\pi\mathbb{Z})$ with $ \R/(2\pi\mathbb{Z})$.
\begin{claim} \label{claim-1}If $\varphi^*\in {\cal C}^k(I^*)$, $k\geq 2$ (resp. $\varphi^*\in {\cal C}^\omega(I^*)$), then $\psi^*$ extends to $\widetilde{{\cal A}}_\varepsilon$ as a ${\cal C}^{k+2}$-map  (resp. ${\cal C}^{\omega}$-map).
\end{claim}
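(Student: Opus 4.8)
The plan is to read Claim \ref{claim-1} as a statement of boundary regularity for the semilinear elliptic system \eqref{system-psi*}, the key observation being that conformal coordinates tame the singularity even though the mean curvature $H^*$ blows up along $\{v=0\}$. Indeed, the principal part of \eqref{system-psi*} is the honest (nondegenerate) Laplacian $\Delta=4\partial_{\xi}\partial_{\cxi}$, and its right-hand side is a quadratic form in $D\psi^*$ whose coefficients depend only on $\dot{\varphi^*}$ evaluated along the height $z^*$; thus the blow-up of $H^*$ is exactly compensated by the vanishing of the conformal factor $E^*$, and the PDE for $\psi^*$ stays regular up to the boundary. The boundary in question is the analytic line $\R/(2\pi\mathbb{Z})$, on which $\psi^*$ takes the constant --- hence real analytic --- value $(0,0,z_0^*)$, so the task is genuinely a Dirichlet boundary regularity problem with analytic data on an analytic boundary.

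The first and decisive step is to upgrade the given continuity up to $\{v=0\}$ to $\psi^*\in\mathcal{C}^{1,\alpha}(\widetilde{{\cal A}}_\varepsilon)$. This is where the geometry must be used: the non-branch condition \eqref{eq:branchpoints} guarantees that $|x^*_{\xi}|^2+|y^*_{\xi}|^2+|z^*_{\xi}|^2$ stays bounded away from $0$ up to and including the singular set, while the conformality relation \eqref{conformal} together with the Weierstrass description \eqref{weierstrass-phi*} --- in which $\bar{g}_\xi=0$ precisely where $|g|=1$, i.e. exactly on $\{v=0\}$ --- pins the size of $\psi^*_\xi$ from both sides. Following \cite[Lemma 1]{GJM-IHP}, I would extract from this a uniform bound for $D\psi^*$ up to the boundary for the function $\psi^*-(0,0,z_0^*)$, which vanishes on $\{v=0\}$, and then invoke the $W^{2,p}$ boundary estimates for the Dirichlet problem to obtain $\psi^*\in\mathcal{C}^{1,\alpha}$ up to $\{v=0\}$.

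Once $\psi^*\in\mathcal{C}^{1,\alpha}(\widetilde{{\cal A}}_\varepsilon)$, the remainder is a bootstrap. Writing \eqref{system-psi*} as $\Delta\psi^*=F(\psi^*,D\psi^*)$ with $F$ quadratic in $D\psi^*$ and with coefficients of the regularity of $\dot{\varphi^*}$, each application of the boundary Schauder estimate for the Dirichlet problem with constant data raises the regularity by the expected amount, and iterating until the coefficient regularity of $\varphi^*$ is saturated yields $\psi^*\in\mathcal{C}^{k+2}(\widetilde{{\cal A}}_\varepsilon)$. In the analytic case, having secured smoothness up to the boundary, I would conclude $\psi^*\in\mathcal{C}^{\omega}(\widetilde{{\cal A}}_\varepsilon)$ by the analyticity of solutions of elliptic systems with real analytic coefficients up to a real analytic boundary carrying real analytic Dirichlet data, exactly as in the argument behind \cite[Lemma 1]{GJM-IHP}.

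The main obstacle is the a priori gradient bound of the second paragraph. The system \eqref{system-psi*} is uniformly elliptic as a PDE, but the immersion it describes is spacelike only in the open strip and degenerates along $\{v=0\}$, so interior theory cannot be quoted directly and the boundedness of $D\psi^*$ must be teased out of the conformal and Weierstrass structure together with \eqref{eq:branchpoints}; the continuity of the Gauss map $g$ across the singular set, with $|g|=1$ there, is the technical fulcrum. After this, the Schauder bootstrap and the analytic-regularity step are routine.
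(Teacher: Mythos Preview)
Your overall framing --- boundary regularity for the semilinear system \eqref{system-psi*} with constant Dirichlet data, followed by a Schauder bootstrap and then an analytic-regularity result --- matches the paper's proof exactly, and your bootstrap and analyticity paragraphs are essentially what the paper does (the paper cites \cite{Mu} for the last step).

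The difficulty is in your ``first and decisive step''. You propose to manufacture an a priori $L^\infty$ bound on $D\psi^*$ up to $\{v=0\}$ out of the non-branch condition \eqref{eq:branchpoints}, the Weierstrass formula \eqref{weierstrass-phi*}, and the continuity of the Gauss map $g$ across the singular set. But at this stage of the argument none of these objects is available on the closed strip: the non-branch inequality and the Weierstrass data are only known on the open set $\{v>0\}$, the Calabi partner $\psi$ (and hence the globally defined $g$) has not yet been constructed, and the behaviour of $g$ at $\{v=0\}$ is precisely part of what the regularity of $\psi^*$ is meant to deliver. Moreover, even granting continuity of $g$, the expression $\bar g_\xi/(1-|g|^4)$ is of the form $0/0$ on the singular set, so boundedness of $\psi^*_\xi$ does not follow without differentiability of $g$ there --- again circular. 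Finally, the Weierstrass representation requires $\dot{\varphi}^*\neq 0$, which Claim~\ref{claim-1} does not assume.

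The paper sidesteps all of this: since $\dot{\varphi}^*$ is bounded near $z_0^*$, the system \eqref{system-psi*} gives $|\Delta\psi^*|\leq M\,|\nabla\psi^*|^2$ on ${\cal A}_\varepsilon$, and $\psi^*$ is continuous up to the boundary with \emph{constant} boundary value. This is exactly the setting of Heinz's boundary-regularity theorem for quasilinear systems in isothermal parameters \cite{H}, which yields $\psi^*\in\mathcal{C}^{2,\alpha}$ up to $\{v=0\}$ directly --- no a priori gradient bound is needed as input, Heinz produces it. After that, your bootstrap and analyticity arguments go through unchanged. So the fix is simply to replace your second paragraph by an appeal to \cite{H}.
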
\begin{proof}
The differentiability of the interior is well-known. In order to discuss the differentiability at the boundary we consider,  without loss of generality, the differentiability at the origin $(0,0)\in \widetilde{{\cal A}}_\varepsilon$.

 Since $\dot{\varphi}^*$ is bounded around $z^*_0$ and $\psi^*$ satisfies \eqref{system-psi*}, we have that 
$$ |\Delta \psi^*| \leq M |\nabla \psi^*|, \qquad \text{on ${\cal A}_\varepsilon$,}$$
for some constant $M>0$. Thus, we can apply the results in \cite{H}  and  standard potential analysis arguments to ensure that $\psi^*$ is ${\cal C}^{2,\alpha}$ in $\overline{D^+_{\varepsilon'}}$  for some $\varepsilon'<\varepsilon$ and for $0<\alpha<1$, where $D^+_{\varepsilon'}=D_{\varepsilon'}\cap  {\cal A}_\varepsilon$. Now, differentiability  follows  by a recursive process. The analyticity is a consequence 
 of \eqref{system-psi*} and \cite[Theorem 3]{Mu}. \end{proof} 
 From Claim \ref{claim-1}, the map $\psi^*$ extends to a map $$\psi^*: \widehat{{\cal A}_\varepsilon}:=\{ \xi\in \mathbb{C}\ | \ -\varepsilon <{\rm Im} \,\xi<\varepsilon\}\rightarrow \mathbb{L}^3, \quad u+ i v\rightarrow \psi^*(u,v),$$ which is a solution of \eqref{system-psi*} and satisfies
 \begin{align*}
& \psi^*(u,v) = \psi^*(u+2\pi,v), \quad v\geq 0,\\
& \psi^{*}(u,0) = (0,0,z^*_0).
\end{align*}
Moreover, from \eqref{system-psi*},
$$ \ll \psi^*_\xi,\psi^*_{\xi}\gg_{\cxi} = \dot{\varphi}^*\,z^*_{\cxi}  \,\ll \psi^*_\xi,\psi^*_{\xi}\gg,$$
that is,  $\ll \psi^*_\xi,\psi^*_{\xi}\gg$ is a pseudo-holomorphic function vanishing on $\{v\geq0\}$, but then  $\ll \psi^*_\xi,\psi^*_{\xi}\gg\equiv 0$ on $\widehat{{\cal A}_\varepsilon}$ and $\psi^*$ is a conformal map (see \cite[Theorem 6.1]{Bers}).
\begin{claim}\label{claim-2} The map $\psi^*:\widehat{{\cal A}_\varepsilon}\rightarrow \mathbb{L}^3$ is a $\*phie3$-maxface whose singular set is $\{v=0\}$ and it consists only of non-degenerate singular points.
\end{claim}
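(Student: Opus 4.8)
The plan is to verify the three defining conditions of Definition \ref{maxfaces} and then to locate and analyze the singular set, exploiting the conformality already at hand together with the Weierstrass data of $\psi^*$ and the curvature hypothesis through Proposition \ref{proper}. First I would record what is already established: $\psi^*$ solves \eqref{system-psi*}, is conformal on all of $\widehat{{\cal A}_\varepsilon}$ (so \eqref{conformal} holds by the pseudo-holomorphicity argument), and on $\{v>0\}$ it coincides with the original spacelike graph, hence is a genuine spacelike immersion with $|x^*_\xi|^2+|y^*_\xi|^2-|z^*_\xi|^2>0$ there; in particular \eqref{eq:singularities} holds. Since $\psi^*(u,0)=(0,0,z^*_0)$ is constant, $\psi^*_u(u,0)=0$, and conformality then forces $(|x^*_\xi|^2+|y^*_\xi|^2-|z^*_\xi|^2)(u,0)=\frac14\ll\psi^*_v,\psi^*_v\gg(u,0)=0$ with $\psi^*_v(u,0)$ a null vector, so $\{v=0\}$ is contained in the singular set.

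For the no--branch--point condition \eqref{eq:branchpoints} I would pass to the Weierstrass representation of Theorem \ref{weierstrass-*}. Writing $\psi^*_\xi$ as in \eqref{weierstrass-phi*} and using that the coefficient $2\bar g_\xi/(1-|g|^4)$ of $\omega$ in \eqref{omega} is finite and nonzero everywhere (because $(1+|g|^2)\omega\neq0$ by Theorem \ref{weierstrass}), one sees that along $\{v=0\}$ the vector $\psi^*_\xi$ is a nonzero multiple of $(1+g^2,\,-i(1-g^2),\,2g)$, which never vanishes. Hence $\psi^*_\xi\neq0$ on all of $\widehat{{\cal A}_\varepsilon}$, so $\psi^*$ is a $\*phie3$-maxface. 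The same computation gives that $|x^*_\xi|^2+|y^*_\xi|^2-|z^*_\xi|^2$ is a positive multiple of $(1-|g|^2)^2$, so the singular set equals $\{|g|=1\}$, matching Remark \ref{r-6}.

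Finally I would show $\{v=0\}=\{|g|=1\}$ and that every singular point is non-degenerate. The key reduction is that $g_{\cxi}\equiv0$ on the singular set (equivalently $\bar g_\xi=0$ there), whence $\partial_u(|g|^2)=0$ and $g_u(u,0)=g_\xi(u,0)$ along $\{v=0\}$; thus non-degeneracy, i.e. $d\lambda\neq0$ in the frontal sense of Theorem \ref{theorem-frontal}, is equivalent to $g_\xi(u,0)\neq0$, that is, to the Gauss curve $u\mapsto g(u,0)$ being regular on $\{|g|=1\}$. Here the hypothesis enters: by Proposition \ref{proper} the gradient map $(z^*_{x^*},z^*_{y^*})$—equivalently, stereographically, the Gauss map $g$—is a diffeomorphism of a punctured neighborhood of the singularity onto the annulus $\mathbb{A}$, whose outer boundary circle corresponds to $\{|g|=1\}$; using the real-analytic extension of Claim \ref{claim-1}, its boundary values parametrize that circle regularly, giving $g_\xi(u,0)\neq0$. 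Non-degeneracy then makes $|g|^2$ cross the value $1$ transversally across $\{v=0\}$, so on a thin enough strip $\{v<0\}$ contains no singular points and the singular set is exactly $\{v=0\}$, all of whose points are non-degenerate.

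The main obstacle is precisely this last step: transferring the diffeomorphism property of the gradient map on the open punctured neighborhood to the regularity of $g$ along the boundary singular curve, i.e. excluding $g_\xi(u,0)=0$. This is a boundary-limit phenomenon—the interior Jacobian is nonzero but could a priori degenerate as $v\to0^+$—and I expect to control it by combining the real-analyticity from Claim \ref{claim-1}, the vanishing $g_{\cxi}\equiv0$ on $\{v=0\}$ (which makes the Jacobian there equal $|g_\xi|^2$), and the global injectivity furnished by Proposition \ref{proper}.
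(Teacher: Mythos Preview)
Your overall strategy is different from the paper's, and it contains a real gap. The paper does not use the Weierstrass data at all in this proof; instead it works directly with two scalar elliptic inequalities and invokes a nodal-set result of Schulz. Concretely, from \eqref{system-psi*} and the Beltrami system \eqref{beltrami-*} one obtains on $\{v>0\}$ the identity
\[
z^*_{\xi\cxi}=\frac{\dot{\varphi}^*}{z^{*2}_{x^*}+z^{*2}_{y^*}}\,|z^*_\xi|^2,
\]
and Proposition~\ref{proper} makes the coefficient bounded near $\{v=0\}$, so $|z^*_{\xi\cxi}|\le M|z^*_\xi|^2$. If $z^*_v(u_0,0)=0$ (hence $z^*_\xi(u_0,0)=0$), Schulz' corollary produces two transversal nodal curves of $z^*-z^*_0$ through $(u_0,0)$; one of them is $\{v=0\}$, and the second one contradicts the light-cone asymptotics. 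This yields $\psi^*_v(u,0)\neq0$, i.e.\ \eqref{eq:branchpoints}. Only then is the partner $\psi$ constructed via Remark~\ref{r-3}, and an analogous elliptic inequality $|\eta_{\xi\cxi}|\le M(|\eta|+|\eta_\xi|)$ for the angle function plus the same nodal-set result gives $\eta_v(u,0)\neq0$, i.e.\ non-degeneracy.

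The gap in your approach is precisely the circularity hidden in the appeal to Theorem~\ref{weierstrass}. The property $(1+|g|^2)\omega\neq0$ that you invoke to conclude $\psi^*_\xi\neq0$ on $\{v=0\}$ is not an independent fact: in Theorem~\ref{weierstrass} it is \emph{equivalent} to the partner $\psi$ being an immersion (see the metric formula \eqref{metric}), which in turn, via \eqref{CR-psi}, is equivalent to the very condition $|x^*_\xi|^2+|y^*_\xi|^2+|z^*_\xi|^2\neq0$ you are trying to establish. Moreover, Theorems~\ref{weierstrass} and~\ref{weierstrass-*} presuppose $\dot{\varphi}\neq0$, a hypothesis that is absent in Section~\ref{sec-5}. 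So the Weierstrass route cannot be used to prove \eqref{eq:branchpoints} here; one needs an argument that works directly on $\psi^*$, which is exactly what the nodal-curve estimate above provides.

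Your non-degeneracy argument inherits the same problem, and the additional obstacle you flag is genuine: Proposition~\ref{proper} controls $(z^*_{x^*},z^*_{y^*})$ as a function of $(x^*,y^*)$ on the punctured disk, but the boundary $\{v=0\}$ corresponds to the single point $(x^*,y^*)=0$, so interior injectivity together with real-analyticity does not by itself rule out $g_\xi(u_0,0)=0$ (think of $u\mapsto u^3$). The paper circumvents this by applying the Schulz nodal-set argument a second time, now to the angle function $\eta$ of the partner $\psi$, which directly forces $d\eta\neq0$ along $\{v=0\}$ without ever analyzing boundary behavior of $g$.
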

\begin{proof}
From \eqref{system-psi*} and \eqref{beltrami-*} we obtain that on $\{v>0\}$,
$$ z^*_{\xi \cxi} = \frac{\dot{\varphi}^*}{{z_x^*}^2 + {z_y^*}^2 } |z^*_\xi|^2.$$
Thus, having in mind Proposition \ref{proper} and Claim \ref{claim-1}, we have 
\begin{equation}\label{nodal-z}
|z^*_{\xi \cxi}| \leq M |z^*_\xi|^2,
\end{equation}
for some positive constant $M$. 
If $z^*_v(u_0,0)=0$, then, since $z^*_u(u_0,0)=0$ and $z^*_\xi(u_0,0)=0$, it follows from Corollary 2 in \cite{Sc} that there exists two crossing nodal curves of $z^* - z^*_0$ at $(u_0,0)$. Since one of these curves is $v=0$ that corresponds to the isolated singularity, the existence of a second nodal curve contradicts that $z^*$ is asymptotic at the origin to the upper or lower light cone. This proves that $\psi^*_v(u,0)\neq0$, $u\in\R$ and we have that  there exists $\varepsilon>0$ such that $\psi^*:\widehat{{\cal A}_\varepsilon}\rightarrow \mathbb{L}^3$ is a $\*phie3$-maxface.

From Remark \ref{r-3} we may consider  $\psi=(x,y,z): \widehat{{\cal A}_\varepsilon}\rightarrow \R^3$  a $\phie3$-minimal surface such that $(\psi,\psi^*)$ is a Calabi's pair of weight $\varphi$ (with $\varphi(z) = \varphi^*(z^*)$). Then, the singular set of $\psi^*$ is the set of points in $ \widehat{{\cal A}_\varepsilon}$ where the angle function $\eta$ of $\psi$ vanishes. 

By using \cite[Lemma 2.1]{MMT-MJM} and the conformal parameter $\xi=u+iv$ of $\psi$, we have that
$$ \eta_{\xi\cxi} + \frac{\dot{\varphi}}{2}(\eta_\xi z_{\cxi} + \eta_{\cxi}z_\xi) + ( \ddot{\varphi}|z_\xi|^2+ \frac{1}{2}(|x_\xi|^2+|y_\xi|^2+|z_\xi|^2)(H^2-2K))\eta =0,$$
and then, as $\psi$ is an immersion and $\varphi$ is ${\cal C}^k$, $k\geq2$,  we obtain that  at a neighborhood of $(u,0)$ 
\begin{equation} \label{eta-nd}| \eta_{\xi\cxi} | \leq M  (|\eta| + |\eta_\xi|), \end{equation}
for some positive constant $M$. Since $\eta(u,0)=0$, we conclude that  $\eta_v(u,0)\neq 0$ otherwise, from \cite[Corollary 2] {Sc} and \eqref{eta-nd} either $\eta\equiv 0$ or there are two nodal curves of $\eta$ through $(u,0)$, in both cases we obtain a contradiction because $\psi^*$ is a spacelike on $\{v>0\}$. Thus, for $\varepsilon$ small enough, $\{v=0\}$ is the singular set of $\psi^*$ and it consists only of non-degenerate singular points. 
\end{proof}

\begin{proposition}\label{jordan-curve} Let $(\psi,\psi^*): \widehat{{\cal A}_\varepsilon}\rightarrow \R^3 \times \mathbb{L}^3$ be the  Calabi's pair in Claim \ref{claim-2}. Then  the curve 
$$\widetilde{\gamma}:\R/(2\pi \mathbb{Z})\rightarrow \R^2\subset\R^3, \qquad u\rightarrow  \psi_u(u,0)$$ is a planar regular Jordan curve bounding a planar star-shaped domain at the  origin $\mathbf{0}\in \R^2\subset\R^3$.
\end{proposition}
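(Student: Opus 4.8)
The plan is to show that $\widetilde{\gamma}$ reduces to a single-valued polar graph $r=\widehat r(\phi)>0$ over the full circle of directions, since any such curve is automatically a regular Jordan curve enclosing a star-shaped domain at the origin. The four ingredients I need are: planarity, the polar form with positive radius, regularity together with strict monotonicity of the radial angle, and the fact that this angle sweeps a total of exactly $\pm2\pi$.

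First I would prove planarity. Because $\{v=0\}$ is an isolated graphical singularity, $\psi^*(u,0)\equiv(0,0,z_0^*)$, so differentiating gives $\psi^*_u(u,0)=0$. Reading off the components from \eqref{coordinates-psi-*}, i.e. $\psi^*_u=e^{\varphi}(-y_v,x_v,z_u)$, forces $x_v(u,0)=y_v(u,0)=z_u(u,0)=0$. Hence $\widetilde{\gamma}(u)=\psi_u(u,0)=(x_u,y_u,0)$ lies in the horizontal plane $\R^2\subset\R^3$, while $\psi_v(u,0)=(0,0,z_v)$ is vertical; by conformality $|\psi_u|^2=|\psi_v|^2=z_v^2$, and $z_v(u,0)\neq0$ since $\psi$ is an immersion. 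Next, from \eqref{normal-psi} the normal $N(u,0)$ is horizontal, so I write $N(u,0)=(\cos\theta(u),-\sin\theta(u),0)$. As $\widetilde{\gamma}$ is horizontal and orthogonal to $N$, it must be a scalar multiple of $(\sin\theta,\cos\theta,0)$; the scalar is continuous and nonvanishing, hence of constant sign $\sigma=\mathrm{sgn}\,z_v=\pm1$, giving $\widetilde{\gamma}(u)=\sigma\,r(u)\,(\sin\theta(u),\cos\theta(u),0)$ with $r(u)=|\psi_u(u,0)|>0$. This is exactly the normalization $\psi_u(u,0)=B(u)(\sin u,\cos u,0)$, $B\neq0$, of Proposition \ref{canonical-parametrization} once the $A\equiv0$ forced above is taken into account. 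A short computation then yields $|\widetilde{\gamma}'|^2=r'^2+r^2\theta'^2$, so regularity follows as soon as $\theta'$ never vanishes; and by Theorem \ref{theorem-frontal} non-degeneracy of the singular points (Claim \ref{claim-2}) is precisely $\Gamma'=(N\circ\alpha)'\neq0$, i.e. $|\theta'|=|\Gamma'|\neq0$. Thus $\theta$ is strictly monotone and $\widetilde{\gamma}$ is regular.

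The main obstacle is to control the total rotation $W:=\tfrac1{2\pi}\oint\theta'\,du\in\mathbb{Z}\setminus\{0\}$ and prove $|W|=1$; this is the only step that uses the genuinely global, graphical nature of the singularity rather than local data, and it is what upgrades ``immersed closed curve'' to ``Jordan curve bounding a star-shaped domain.'' I would argue by a winding count in the $x^*y^*$-plane. For small $v_0>0$ the loop $u\mapsto(x^*,y^*)(u,v_0)$ is the $(\Phi^*)^{-1}$-image of the core circle $\{v=v_0\}$ of the annulus; since $\Phi^*\colon D^*_\rho\to\Omega$ is a diffeomorphism and $D^*_\rho$ is a punctured disc, this loop winds exactly $\pm1$ about the origin. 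As $v_0\to0$ one has $(x^*,y^*)(u,v_0)=v_0\big[(\psi^*_v)_{x^*y^*}(u,0)+O(v_0)\big]$, so the winding is computed by the direction of $(\psi^*_v)_{x^*y^*}(u,0)=e^{\varphi}(y_u,-x_u)$, which is a fixed $90^{\circ}$-rotation and positive rescaling of $\widetilde{\gamma}$ and is nonvanishing because $\psi$ is an immersion. Hence $\widetilde{\gamma}$ winds $\pm1$ about the origin; comparing with $\widetilde{\gamma}=\sigma r(\sin\theta,\cos\theta)$, whose angular part winds $-W$, gives $|W|=1$.

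Finally I would assemble the pieces. With $\theta$ strictly monotone of total variation $\pm2\pi$, the radial direction $u\mapsto\sigma(\sin\theta(u),\cos\theta(u))$ is an orientation-preserving (or reversing) diffeomorphism of $\R/2\pi\mathbb{Z}$ onto the unit circle of directions, so the polar angle $\phi$ of $\widetilde{\gamma}$ is a monotone bijection of $\mathbb{S}^1$. Composing $r$ with its inverse expresses $\widetilde{\gamma}$ as a single-valued polar graph $r=\widehat r(\phi)>0$; such a graph is an embedded regular Jordan curve, and the region $\{0\le r\le\widehat r(\phi)\}$ it bounds contains every radial segment issuing from the origin, i.e. is star-shaped at $\mathbf{0}\in\R^2\subset\R^3$, which is the assertion.
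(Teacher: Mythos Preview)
Your argument is correct and follows a genuinely different route from the paper's. The paper expresses everything through the Gauss map: along $\{v=0\}$ one has $g(u,0)=(b_1-ib_2)/b_3$ with $|g(u,0)|=1$; regularity of $\widetilde\gamma$ is obtained from Kenmotsu's formula (non-degeneracy gives $g_{\bar\xi}(u,0)=0$ and $g_u(u,0)\neq0$, which unwinds to $b_1'^2+b_2'^2-b_3'^2\neq0$ and hence $(b_1',b_2')\neq(0,0)$); and embeddedness comes by invoking Proposition~\ref{proper}, since the gradient map $\mu=(z^*_{x^*},z^*_{y^*})$ is a diffeomorphism onto an annulus with outer boundary $\mathbb{S}^1$, so the boundary values $g_0(u)=g(u,0)$ are injective and $\widetilde\gamma=b_3\,(i\bar g_0,0)$ is a Jordan curve star-shaped at the origin.

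Your approach replaces the appeal to Proposition~\ref{proper} by a direct winding count: because $\Phi^*\colon D^*_\rho\to\Omega$ is a global diffeomorphism of a punctured disc onto the conformal annulus, the circles $\{v=v_0\}$ must pull back to loops winding exactly $\pm1$ about the puncture; the Taylor expansion at $v_0=0$ transfers this to the nowhere-vanishing first-order term $e^{\varphi}(y_u,-x_u)$, a rigid rotation of $\widetilde\gamma$. Combined with the strict monotonicity of $\theta$ that you read off from Theorem~\ref{theorem-frontal}, this forces $\widetilde\gamma$ to be a single-valued polar graph $r=\widehat r(\phi)>0$, whence Jordan and star-shaped. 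The paper's argument is shorter once Proposition~\ref{proper} is available, but it ties the proof to the nonvanishing-Gaussian-curvature hypothesis required there; your winding argument is more elementary and self-contained, relying only on the conformal uniformization $\Phi^*$ and Claim~\ref{claim-2}, and it makes the star-shapedness conclusion immediate.
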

\begin{proof} Since $(\psi,\psi^*)$ is a Calabi's pair of weight $\varphi$ and $z_v(u,0)=e^{-\varphi^*(z^*_0)} z^*_v(u,0)\neq 0$, then we can  write locally as
\begin{align*}
&\psi^*_u(u,0)= (0,0,0),\\
&\psi^*_v(u,0)=  e^{\varphi^*(z^*_0)} (b_1,b_2,b_3)(u), \quad b_1^2 + b^2_2 = b_3^2, \quad  b_3\neq 0,\\
&\psi_u(u,0)= (b_2,-b_1,0)(u),\\
&\psi_v(u,0)=(0,0,b_3)(u), \\
& g(u,0) = \frac{b_1-i b_2}{b_3}(u),
\end{align*}
where $g$ is the Gauss map of $\psi$ and $b_3$ is $2\pi$-periodic. 

But from \cite[Theorem 1]{kenmotsu} and Claim \ref{claim-2}  the non-degenerate singular condition means that $g_{\cxi}(u,0)=0$ and $|g'(u,0)|\neq 0$. Thus
$$0\neq |g'(u,0)|^2 = \frac{(b_1'b_3 - b_3'b_1)^2+(b_2'b_3 - b_3'b_2)^2}{b_3^2} = {b'}_1^2 + {b'}_2^2 -{b'}_3^2$$ and since $b_1^2+b^2_2=b_3^2$ we conclude  that $\widetilde{\gamma} $ is regular.   The embeddedness of $\widetilde{\gamma} $ follows because  $\widetilde{\gamma} = b_3 \, (i \overline{g},0)$ with $b_3\neq0$  and we know from Proposition \ref{proper} that  $g_0:\R/(2\pi\mathbb{Z})\rightarrow \mathbb{S}_1 $, $g_0(u)=g(u,0)$ is injective.
\end{proof}
\begin{definition}\label{singular-directions} Let $(\psi,\psi^*)$ be a Calabi's pair as  in Proposition \ref{jordan-curve}. We will say that $(\psi,\psi^*)$  is the Calabi's pair of  $z^*$. The curve $\widetilde{\gamma}:\R/(2\pi \mathbb{Z})\rightarrow \R^2\subset\R^3$, $$\widetilde{\gamma}=\psi_u(u,0)=\langle \psi_v(u,0),\vec{e}_3\rangle  (\overline{g}(u,0),0), $$ will be called the   direction curve   associated to the Calabi's pair of  the graph $z^*$  in the conformal parameter $(u,v)$.
\end{definition}
\begin{remark}\label{canonical-z*} Since the Gauss map $g$ of $\psi$ is real analytic in $\widehat{{\cal A}_\varepsilon}$ and $g$ coincides with the Gauss map of $z^*$ (see Remark \ref{r-6}), then $g(u,0):\R/(2\pi\mathbb{Z})\rightarrow \mathbb{S}^1$ is real analytic, $2\pi$-periodic and $|g'(u,0)|\neq0$, for every $u$. In consequence we may prove, by holomorphic extension, the existence of a unique conformal parameter $(u,v)$ in some $\widehat{{\cal A}_\delta}$ such that $g(u,0)= \cos u + i \sin u$. From Proposition \ref{canonical-parametrization} and Definition \ref{def-canonical}, this conformal parametrization will be called the conformal parametrization of the graph $z^*$.
\end{remark}
\subsection{$(\Omega,d{s^*}^2)$ is conformally equivalent to a punctured disk}
Here, we will assume  that $\Omega$ is a punctured disk ${\cal D}^*$.  If we take the map $\Gamma: {\cal D}^*   \rightarrow \R^2$ given by $\Gamma(u,v) = (z^*_{x^*}(u,v),z^*_{y^*}(u,v))$, then from \eqref{eq-graph*} and \eqref{beltrami-*}, we have
\begin{align}\label{sobolev} \Gamma_u^2 + \Gamma_v^2 &= (x^*_u y^*_v-x^*_v y*_u)\left(\dot{\varphi}^* \sqrt{W^*}(r^* + t^*)- \frac{1 + W^*}{\sqrt{W^*}} (r^* t^* - s^{*2})\right),
\end{align}
where $r^*=z^*_{x^*x^*}$, $t^*=z^*_{y^*y^*}$,  $s^*= z^{*}_{x^*y^*}$ and $W^*=1-z^{*2}_{x^*}-z^{*2}_{y^*}$.

Let  $\mathscr{A}_\varepsilon=\{(x^*,y^*)\ | \ \varepsilon < x^{*2} + y^{*2}<\rho^2\}$, then by application of Stokes' theorem to $\dot{\varphi}^* \sqrt{W^*}(z^*_{x^*},z^*_{y^*})$ and from \eqref{eq-graph*}, we have
\begin{align}
&\int_{\mathscr{A}_\varepsilon} \dot{\varphi}^* \sqrt{W^*}(r^* + t^*) dx^* dy^* = \label{1-sobolev}\\ &=\int_{\partial\mathscr{A}_\varepsilon}\dot{\varphi}^* \sqrt{W^*}<(z^*_{x^*},z^*_{y^*}),\mathfrak{n}> ds 
- \int_{\mathscr{A}_\varepsilon} \ddot{\varphi}^* \sqrt{W^*}(1 - W^*) dx^* dy^* + \nonumber\\
&+ \int_{\mathscr{A}_\varepsilon} \frac{\dot{\varphi}^*}{\sqrt{W^*}}( r^* z^{*2}_{x^*} + t^*z^{*2}_{y^*} + 2 z^*_{x^*} z^*_{y^*} s^*) dx^* dy^* \leq \nonumber\\
&\leq M_0 +  \int_{\mathscr{A}_\varepsilon} \dot{\varphi}^{*2} \sqrt{W^*} dx^* dy^*  - \int_{\mathscr{A}_\varepsilon} \dot{\varphi}^* \sqrt{W^*}(r^* + t^*) dx^* dy^* \leq \nonumber\\
&\leq M_1  - \int_{\mathscr{A}_\varepsilon} \dot{\varphi}^* \sqrt{W^*}(r^* + t^*) dx^* dy^*, \nonumber
\end{align}
for some constants $M_0>0$ and $M_1>0$, where $\mathfrak{n}$ stands the exterior normal to $\partial \mathscr{A}_\varepsilon$ and  we have used that $W^*$,  $\dot{\varphi}^*$, $\ddot{\varphi}^*$ are bounded and  $|<(z^*_{x^*},z^*_{y^*}),\mathfrak{n}> |\leq1$.

 If the Gaussian curvature of the graph $z^*(x^*,y^*)$ does not vanish around the singularity we can apply Proposition \ref{proper} to prove that 
 \begin{align}
 & \left|\int_{\mu^{-1}(\mathbb{A})}\frac{1 + W^*}{\sqrt{W^*}} (r^* t^* - s^{*2})) dx^*dy^*\right| \leq \int_{\mathbb{A}} \frac{2 - p^{*2} - q^{*2}}{\sqrt{1- p^{*2} - q^{*2}}} dp^* dq^* = \label{2-sobolev}\\
 &= \frac{2\pi }{3}(4-R^2)\sqrt{1-R^2}<+\infty,\nonumber
 \end{align}
 where $\mathbb{A}$ and $\mu$ are as in Proposition \ref{proper}.
 
 From \eqref{sobolev}, \eqref{1-sobolev} and \eqref{2-sobolev}, we have that $\Gamma \in {\cal W}^{1,2}({\cal D}^*)$ and we can apply Lemma 3.1 in \cite{Cu} to prove that there exists $r_n\searrow 0$ such that 
 $$\lim_{n\rightarrow +\infty} \int_{\partial{\cal D}_{r_n}}|d\Gamma(u,v)| =0,$$
 that is, there exists a subsequence of $\{\Gamma(\partial{\cal D}_{r_n})\}$ collapsing into a fixed point of the closed unit disk, which contradicts Proposition \ref{proper}.
 
As consequence we have the following removability result,
\begin{theorem}\label{removable} Let  $z^*$ be a solution of \eqref{eq-graph*} on $D^*_\rho$  such that the Gaussian curvature of its graph in $\mathbb{L}^3$  does not vanish around the singularity. If $(D^*_\rho,ds^{*2})$ is conformally equivalent to a punctured disk, then $z^*$ has a removable singularity at the puncture. 
\end{theorem}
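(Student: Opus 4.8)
The plan is to argue by contradiction: I would assume the origin is a genuine non-removable singularity while $(D^*_\rho,ds^{*2})$ is conformally a punctured disk, and then show that this configuration is impossible once the Gaussian curvature stays away from zero, forcing removability. All the analytic ingredients have been assembled in the discussion preceding the statement, so the real task is to organize them into a clean contradiction. First I would introduce the gradient map $\Gamma(u,v)=(z^*_{x^*},z^*_{y^*})$ read in the conformal coordinates $(u,v)$ of the punctured disk $\mathcal{D}^*$, and record its Dirichlet energy density $|d\Gamma|^2=\Gamma_u^2+\Gamma_v^2$ by combining the Beltrami system \eqref{beltrami-*} with the graph equation \eqref{eq-graph*}; this is precisely identity \eqref{sobolev}, which displays $|d\Gamma|^2$ as a sum of a trace term in $\dot\varphi^*\sqrt{W^*}(r^*+t^*)$ and a Hessian-determinant (hence Gaussian-curvature) term in $\frac{1+W^*}{\sqrt{W^*}}(r^*t^*-s^{*2})$.

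Next I would establish that $\Gamma\in\mathcal{W}^{1,2}(\mathcal{D}^*)$ by bounding the integrals of these two terms separately and uniformly in the inner radius $\varepsilon$. For the trace term, integrating the field $\dot\varphi^*\sqrt{W^*}(z^*_{x^*},z^*_{y^*})$ by parts over the annulus $\mathscr{A}_\varepsilon$ and invoking \eqref{eq-graph*} yields the estimate \eqref{1-sobolev}; the key point is that the boundary contribution, the $\ddot\varphi^*$-term and the leftover interior term are all controlled because $W^*$, $\dot\varphi^*$ and $\ddot\varphi^*$ are bounded near the singularity and $|\langle(z^*_{x^*},z^*_{y^*}),\mathfrak{n}\rangle|\le 1$, so that $\int_{\mathscr{A}_\varepsilon}\dot\varphi^*\sqrt{W^*}(r^*+t^*)$ absorbs itself on the right-hand side and is bounded independently of $\varepsilon$. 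For the Hessian-determinant term I would use Proposition \ref{proper}: the non-vanishing Gaussian curvature makes the gradient map a $\mathcal{C}^2$ diffeomorphism of a punctured neighbourhood onto the annulus $\mathbb{A}=\{R^2<p^{*2}+q^{*2}<1\}$, so the change of variables in \eqref{2-sobolev} converts that integral into the finite quantity $\int_{\mathbb{A}}\frac{2-p^{*2}-q^{*2}}{\sqrt{1-p^{*2}-q^{*2}}}\,dp^*\,dq^*=\frac{2\pi}{3}(4-R^2)\sqrt{1-R^2}$. Combining both bounds through \eqref{sobolev} gives finite Dirichlet energy, i.e. $\Gamma\in\mathcal{W}^{1,2}(\mathcal{D}^*)$.

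Finally I would extract the contradiction. Applying Lemma 3.1 of \cite{Cu} to the $\mathcal{W}^{1,2}$ map $\Gamma$, I obtain a sequence of radii $r_n\searrow 0$ with $\int_{\partial\mathcal{D}_{r_n}}|d\Gamma|\to 0$, so after passing to a subsequence the image curves $\Gamma(\partial\mathcal{D}_{r_n})$ collapse to a single point of the closed unit disk. But Proposition \ref{proper} identifies $\Gamma$, up to the conformal change of coordinates, with the diffeomorphism onto the annulus $\mathbb{A}$, and the image of a small loop encircling the puncture must be a curve separating the two boundary circles of $\mathbb{A}$; such curves cannot shrink to a point. This contradiction shows that a non-removable singularity is incompatible with the punctured-disk conformal type under the curvature hypothesis, so in that case $z^*$ extends across the origin and the singularity is removable.

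The hard part, I expect, is the uniform-in-$\varepsilon$ control of the trace term $\int_{\mathscr{A}_\varepsilon}\dot\varphi^*\sqrt{W^*}(r^*+t^*)$ through integration by parts, since one must absorb the interior term created by differentiating $\sqrt{W^*}$ back into the left-hand side so that the resulting self-improving inequality in \eqref{1-sobolev} closes; the curvature hypothesis, entering through Proposition \ref{proper}, is what simultaneously tames the Hessian-determinant term and supplies the topological obstruction that finishes the argument.
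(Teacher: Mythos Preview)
Your proposal is correct and follows essentially the same argument as the paper: compute $|d\Gamma|^2$ via \eqref{sobolev}, bound the trace term by the self-absorbing Stokes computation \eqref{1-sobolev} and the Hessian-determinant term by the change of variables \eqref{2-sobolev} coming from Proposition \ref{proper}, conclude $\Gamma\in\mathcal{W}^{1,2}(\mathcal{D}^*)$, and then apply Courant's Lemma 3.1 to produce circles whose $\Gamma$-images collapse, contradicting Proposition \ref{proper}. Your explicit topological remark that the images $\Gamma(\partial\mathcal{D}_{r_n})$ must separate the boundary components of $\mathbb{A}$ is a helpful detail that the paper leaves implicit.
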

\subsection{Existence and classification of conelike singularities}
Let  $\widetilde{\gamma}$ be a real analytic, regular Jordan planar curve bounding a planar star-shaped domain $\Omega_{\widetilde{\gamma}}$ at the origin $\mathbf{0}\in \Omega_{\widetilde{\gamma}}$. Then, we may assume that $$\widetilde{\gamma}: \R/(2\pi\mathbb{Z}) \rightarrow \R^2\subset \R^3, \qquad \widetilde{\gamma}(u) = b(u)(\sin u, \cos u,0),$$ where $b$ is a  real analytic, $2\pi$-periodic and non-vanishing function.

Consider $\varphi \in {\cal C}^\omega(I)$ in some real interval $I$ and take for $\varepsilon$ small enough the unique real analytic solution $$\psi=(x,y,z):\widehat{{\cal A}}_\varepsilon=\{\xi=u+iv\ | \ -\varepsilon < {\rm Im}\ \xi < \varepsilon\} \rightarrow \R^3, \quad \xi\rightarrow \psi(u,v),$$ to the Cauchy problem \eqref{cauchy-psi} when $A\equiv 0$ and $B(u)=b(u)$. Then, from Theorem \ref{existence-psi}, $\psi$ is a conformally immersed $\phie3$-minimal immersion whose angle function $\eta$ satisfies $\eta(u,v)\neq 0$ if $v\neq0$, $\eta(u,0)=0$ and $\eta_v(u,0)=1$ for all $u$, that is, $\psi$ is, locally, a vertical $\phie3$-minimal vertical graph on $\widehat{{\cal A}}_\varepsilon\cap \{v>0\}$.

In particular if $(\psi,\psi^*): \widehat{{\cal A}}_\varepsilon \rightarrow\R^3\times \mathbb{L}^3$ is a Calabi's pair of weight $\varphi$, then $\psi^*=(x^*,y^*,z^*)$ is the unique solution  to the Cauchy problem \eqref{cauchy-psi*} with $A^*\equiv0$ and $B^{*}(u)=e^{\varphi(z_0)}b(u)$, $z_0=\psi(0,0)$. Since the initial data of \eqref{cauchy-psi*} are $2\pi$-periodic, by the unicity of the solution, $\psi^*$ induces a well-defined  real analytic map $$\psi^*:\widehat{{\cal A}}_\varepsilon /(2\pi\mathbb{Z})\rightarrow\mathbb{L}^3, \quad (u,v) \rightarrow (x^*,y^*,z^*)(u,v)$$ which is a  $\*phie3$-maxface with singular set $\{v=0\}$ satisfying that, locally,  is a spacelike vertical $\*phie3$-maximal graph on ${\cal A}_\varepsilon /(2\pi\mathbb{Z})$, 
\begin{align*}
&\psi^*(u,0) = (p^*_1,p^*_2,p^*_3)\in \mathbb{L}^3,\\
&\psi^*_v(u,0)= e^{\varphi(z_0)}b(u) (\cos u,-\sin u,1),
\end{align*}
 and where $\varphi^*(z^*)=\varphi(z)$, $z^* =p^*_3 + \displaystyle \int_{p_3}^z e^{\varphi(t)} dt$. 
 \begin{proposition}\label{graph-phi*} The map $\psi^*:{\cal A}_\varepsilon /(2\pi\mathbb{Z})\rightarrow\mathbb{L}^3$ is globally a  spacelike $\*phie3$-maximal vertical graph of a real analytic function $z^*=z^*(x^*,y^*)$ on some punctured disk with a non-removable singularity at the puncture.
 \end{proposition}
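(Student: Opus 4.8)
The plan is to prove that the vertical projection $F:=\pi\circ\psi^*$, with $\pi(x^*,y^*,z^*)=(x^*,y^*)$, is a diffeomorphism from the regular part $\{0<v<\varepsilon\}/(2\pi\mathbb{Z})$ onto a punctured disk centered at $p^*_h:=(p^*_1,p^*_2)$; the height then descends to a single-valued real-analytic function $z^*=z^*(x^*,y^*)$ there. I would first record what is already available: on $\{v>0\}$ the map $\psi^*$ is a spacelike $\*phie3$-maximal immersion which is locally a vertical graph, so $F$ is a local diffeomorphism on $\{v>0\}$; and the Cauchy data of \eqref{cauchy-psi*} show that $F$ extends continuously to the singular set $\{v=0\}$ with $F(u,0)\equiv p^*_h$ and
\[
F_v(u,0)=e^{\varphi(z_0)}\,b(u)\,(\cos u,-\sin u).
\]
Since $(\cos u,-\sin u)$ is the rotation by $-\pi/2$ of $(\sin u,\cos u)$, this limiting velocity curve is, up to the positive factor $e^{\varphi(z_0)}$ and that rotation, exactly the prescribed $\widetilde{\gamma}$; hence $\Phi_0(u):=e^{\varphi(z_0)}b(u)(\cos u,-\sin u)$ is again a regular real-analytic Jordan curve bounding a star-shaped domain $\Omega_0$ at the origin.

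The engine of the argument is a blow-up of $F$ at the puncture. Because $F(\cdot,0)$ is constant and $F$ is real analytic up to $\{v=0\}$, I would write $F(u,v)-p^*_h=v\,\Phi(u,v)$ with $\Phi$ real analytic and $\Phi(u,0)=\Phi_0(u)$. The model is the cone map $C(u,t)=t\,\Phi_0(u)$, which is a diffeomorphism of $(\R/2\pi\mathbb{Z})\times(0,1]$ onto $\overline{\Omega_0}\setminus\{\mathbf 0\}$: injectivity follows from star-shapedness, since each ray from the origin meets $\partial\Omega_0$ once, so $t_1\Phi_0(u_1)=t_2\Phi_0(u_2)$ forces $u_1=u_2$ and $t_1=t_2$; and $\det DC=t\,(\Phi_0\wedge\Phi_0')\neq0$ because star-shapedness is equivalent to the polar angle of $\Phi_0$ being monotone. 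Thus $F-p^*_h$ is a real-analytic perturbation of this injective cone.

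The main obstacle is to upgrade ``$F$ is close to an injective cone'' into genuine global injectivity and surjectivity onto a punctured disk, uniformly down to the puncture, where the perturbation is not $C^1$-small in the ordinary sense. I would resolve this by a degree/covering argument. Fix a small $\varepsilon_1\in(0,\varepsilon)$ and consider the loop $c_1(u)=F(u,\varepsilon_1)$; rescaling, $\tfrac1{\varepsilon_1}(c_1(u)-p^*_h)=\Phi_0(u)+O(\varepsilon_1)$ converges in $C^1$ to the embedded curve $\Phi_0$, so for $\varepsilon_1$ small $c_1$ is a Jordan curve winding exactly once around $p^*_h$ and bounds a topological disk $D_1\ni p^*_h$. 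The restriction $F:\{0<v<\varepsilon_1\}/(2\pi\mathbb{Z})\to D_1\setminus\{p^*_h\}$ is a local diffeomorphism that is proper: the preimage of a compact $K\subset D_1\setminus\{p^*_h\}$ has $v$ bounded below (from $|F-p^*_h|=v|\Phi|\le Cv$) and bounded away from $\varepsilon_1$ (since $F(u,v)\to c_1$ as $v\to\varepsilon_1$, while $K$ avoids $c_1$). Hence $F$ is a covering map of $D_1\setminus\{p^*_h\}$, and since the winding number of $c_1$ about $p^*_h$ is one the covering is single-sheeted, i.e. $F$ is a diffeomorphism onto $D_1\setminus\{p^*_h\}$. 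Restricting to a round punctured disk $D^*_\rho\setminus\{\mathbf 0\}$ inside the image (after translating $p^*_h$ to the origin), $F^{-1}$ is real analytic and $z^*:=z^*\circ F^{-1}$ is the desired real-analytic function whose graph is $\psi^*$; it is a spacelike $\*phie3$-maximal graph by construction.

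Finally I would verify non-removability. Along the singular set the vector $\psi^*_v(u,0)=e^{\varphi(z_0)}b(u)(\cos u,-\sin u,1)$ satisfies
$\ll\psi^*_v,\psi^*_v\gg(u,0)=e^{2\varphi(z_0)}b(u)^2(\cos^2u+\sin^2u-1)=0$,
so it is lightlike; hence the tangent planes of $\psi^*$ degenerate to lightlike planes as the puncture is approached, forcing $|\nabla z^*|\to1$ there. Consequently $z^*$ cannot extend as a $C^1$ spacelike graph across the origin, so the singularity is non-removable, completing the proof.
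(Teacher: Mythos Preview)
Your proof is correct and follows the same covering-map strategy as the paper: show that the horizontal projection is a proper local diffeomorphism onto a punctured neighborhood of $p^*_h$, then determine that the covering is single-sheeted. The difference lies in how the sheet number is computed. The paper identifies it with the degree of $g/|g|$ along a level curve $\{z^*=\delta\}$, invoking the Gauss map of the Calabi pair (and implicitly Proposition~\ref{proper}, via the map $\mu$), and then reads off degree one from $g(u,0)=e^{iu}$. You instead blow up the projection, writing $F-p^*_h=v\,\Phi(u,v)$ with $\Phi(\cdot,0)=\Phi_0$ a rigid rotation and rescaling of the prescribed star-shaped curve $\widetilde\gamma$; the rescaled loop $\varepsilon_1^{-1}(c_1-p^*_h)$ is then $C^1$-close to the embedded Jordan curve $\Phi_0$, forcing winding number one. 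Your route is more self-contained and avoids the Gauss-map machinery, while the paper's computation ties the degree back to the geometric data already set up in Section~\ref{sec-4}. Your non-removability argument (lightlike limiting tangent planes, hence $|\nabla z^*|\to1$) is equivalent to the paper's light-cone asymptotics. One small point worth making explicit in your write-up is that $F$ actually maps $\{0<v<\varepsilon_1\}$ into $D_1$; this follows from the blow-up $F-p^*_h=v\Phi$ together with $\Phi(u,v)\in\Omega_0$ for $v$ small, but you use it tacitly when declaring the codomain of the covering.
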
 
 \begin{proof}It is clear that
 $$\lim_{v\rightarrow 0} \frac{(z^*(u,v)-p^*_3)^2}{(x^*(u,v)-p^*_1)^2+(y^*(u,v)-p^*_2)^2}= \frac{z^{*2}_v}{x^{*2}_v + y^{*2}_v}(u,0) =1,$$
 and we may assume that $\psi^*(u,v)$ is tangent  to the upper null cone $\mathbb{N}^2_+$.
 
 Since we $\psi^*$ is, locally, a vertical graph on ${\cal A}_\varepsilon/(2\pi\mathbb{Z})$, we have that the horizontal projection $\Pi:{\cal A}_\varepsilon/(2\pi\mathbb{Z})\rightarrow \R^2$, $\Pi(u,v)=(x^*(u,v),y^*(u,v))$ is a local diffeomorphism satisfying $\Pi(u,0)=(p^*_1,p^*_2)$. Using now standard topological arguments we prove that for some $\varepsilon'<\varepsilon$, $\Pi$ is a covering map from ${\cal A}_{\varepsilon'}/(2\pi\mathbb{Z})$ onto a punctured neighborhood  of $(p^*_1,p^*_2)$ and the number of sheets of $\Pi$ is the degree of $\gamma^*_\delta=\psi^*(u,v)\cap\{z^*(u,v)=\delta\}$, $\delta>0$ small enough, which is calculated as the degree of $\mu/|\mu|(\gamma^*_\delta)=g/|g|(\gamma^*_\delta)$ where $g$ is the Gauss map of $\psi^*$ and $\mu$ is as in Proposition \ref{proper}. But the Gauss map of $\psi^*$ coincides with the Gauss map of $\psi$ which extends as a real analytic map to $\widehat{{\cal A}_{\varepsilon'}}$, in particular on the real axis $g(u,0)= \cos u + i \sin u$ and then the degree of $g/|g|(\gamma^*_\delta)$ is one, which concludes the proof. 
 \end{proof}
  \begin{figure}[h]
\begin{center}
\includegraphics[width=.40\textwidth]{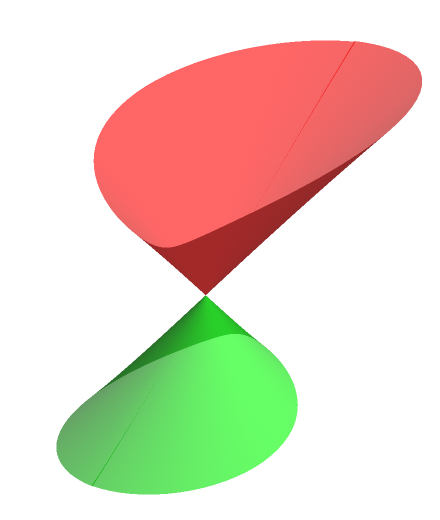} 
\end{center}
\caption{ Solution to \eqref{cauchy-psi*} with $\dot{\varphi}=1/z^*$,  $A^*\equiv0, B^*(u)= 2 + \cos u$.  }
\end{figure}
Using the results of the previous subsections, we can prove the local classification of $[\varphi^*,\vec{e}_3]$-maximal graphs with a non-removable isolated singularity in  Theorem \ref{Th-B}. The result  below states more precisely the statements established in such theorem,
 \begin{theorem}\label{classification-conelike}Let $\varphi^*\in {\cal C}^\omega(I^*)$ and $p^*=(p^*_1,p^*_2,p^*_3)\in\mathbb{L}^3$ a fixed point such that $p^*_3\in I^*$. Let ${\cal M}$ be  the class of all spacelike $\*phie3$-maximal vertical graphs in $\mathbb{L}^3$, with upwards-pointing normal, having $p^*$ as a non-removable isolated singularity and whose Gaussian curvature does not vanish near $p^*$ (we identify two graphs in ${\cal M}$ if they overlap on an open set containing $p^*$).
 
 If  $\mathscr{P}$ denotes the class of analytic regular planar Jordan curves bounding a star-shaped domain at the origin, then the map $$\varTheta:{\cal M}\rightarrow \mathscr{P}, \quad \varTheta(\Sigma)=\widetilde{\gamma},\quad \widetilde{\gamma}(u)= \psi_u(u,0),$$ that send each graph $\Sigma\in {\cal M}$ to the  direction curve, with respect to the canonical conformal parametrization, associated to the Calabi's pair of the graph $\Sigma$,  provides a one-to-one correspondence between ${\cal M}$ and $\mathscr{P}$.
 \end{theorem}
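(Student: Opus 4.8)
The plan is to combine the analytic results of this section into the three standard steps — well-definedness, surjectivity and injectivity — letting the preceding propositions carry the analytic weight. First I would check that $\varTheta$ is well-defined and lands in $\mathscr{P}$. Let $\Sigma\in{\cal M}$ be represented by a solution $z^*$ of \eqref{eq-graph*} with a non-removable isolated singularity at $p^*$ and non-vanishing Gaussian curvature nearby. Endowing the punctured domain with the conformal structure of $ds^{*2}$, Theorem \ref{removable} rules out the punctured-disk type, so $(\Omega,ds^{*2})$ must be conformally an annulus, and Claim \ref{claim-2} applies: $\psi^*$ extends to a $\*phie3$-maxface whose singular set $\{v=0\}$ consists only of non-degenerate singular points. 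Since the graph singularity is the single point $p^*$, the whole singular curve collapses, i.e. $\psi^*(u,0)\equiv p^*$ and hence $\psi^*_u(u,0)\equiv 0$; comparing with Proposition \ref{canonical-parametrization} forces $A^*\equiv 0$ (equivalently $A\equiv0$). Fixing the canonical conformal parameter through $g(u,0)=\cos u+i\sin u$ as in Remark \ref{canonical-z*} makes the direction curve $\widetilde{\gamma}(u)=\psi_u(u,0)=B(u)(\sin u,\cos u,0)$ intrinsically attached to $\Sigma$, and Proposition \ref{jordan-curve} shows $\widetilde{\gamma}\in\mathscr{P}$. Two overlapping graphs share the germ of $z^*$ at $p^*$, hence the same canonical data, so $\varTheta$ respects the identification in ${\cal M}$.

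Next I would prove surjectivity. Given $\widetilde{\gamma}\in\mathscr{P}$, star-shapedness at the origin lets me write $\widetilde{\gamma}(u)=b(u)(\sin u,\cos u,0)$ with $b$ real analytic, $2\pi$-periodic and nowhere vanishing. Feeding $A\equiv0$ and $B=b$ into the construction of Proposition \ref{graph-phi*} yields a Calabi's pair $(\psi,\psi^*)$ of weight $\varphi$ for which $\psi^*$ descends to a spacelike $\*phie3$-maximal vertical graph $z^*=z^*(x^*,y^*)$ on a punctured disk with a non-removable isolated singularity at $p^*$. The regularity of $\widetilde{\gamma}$ means $|g'(u,0)|\neq0$, so the Gauss map is an immersion near $\{v=0\}$ and $K\neq0$ there; by \eqref{gausscurvature-final-step} this gives $K^*\neq0$ near $p^*$, so the graph belongs to ${\cal M}$. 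By construction its canonical parametrization satisfies $\psi_u(u,0)=(b(u)\sin u,b(u)\cos u,0)=\widetilde{\gamma}(u)$, whence $\varTheta$ is onto.

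Finally, injectivity. Suppose $\Sigma_1,\Sigma_2\in{\cal M}$ satisfy $\varTheta(\Sigma_1)=\varTheta(\Sigma_2)=\widetilde{\gamma}$. By the first step each is captured, in its canonical conformal parametrization, by a Calabi's pair with $A^*\equiv0$ and with $B^*$ read off from the common curve via $B^*(u)=e^{\varphi^*(p^*_3)}b(u)$, while both partners share the same initial point $\psi^*(u,0)\equiv p^*$. Hence $\psi^*_1$ and $\psi^*_2$ solve one and the same Cauchy problem \eqref{cauchy-psi*}, and the uniqueness in Theorem \ref{existence-psi*} gives $\psi^*_1=\psi^*_2$ on a neighbourhood of $\{v=0\}$. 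Thus the two graphs overlap on an open set containing $p^*$ and are identified in ${\cal M}$.

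The main obstacle is the well-definedness step: one must genuinely rule out the punctured-disk conformal type (so that Theorem \ref{removable} forces the annulus picture) and then exploit the isolated-graph hypothesis to collapse the singular curve to $p^*$, producing $A^*\equiv0$ and the special radial form $\widetilde{\gamma}(u)=b(u)(\sin u,\cos u,0)$ required for $\widetilde{\gamma}\in\mathscr{P}$. Everything else reduces to bookkeeping with the canonical parametrization of Remark \ref{canonical-z*} and the uniqueness of the Cauchy problems \eqref{cauchy-psi}--\eqref{cauchy-psi*}, since the delicate analytic content — removability, boundary regularity via Claim \ref{claim-2}, and non-degeneracy of the singular curve — has already been absorbed into Theorem \ref{removable}, Claim \ref{claim-2} and Proposition \ref{graph-phi*}.
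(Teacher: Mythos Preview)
Your proof is correct and follows essentially the same approach as the paper's: well-definedness via Proposition \ref{jordan-curve} (after Theorem \ref{removable} rules out the punctured-disk conformal type), surjectivity via the construction in Proposition \ref{graph-phi*}, and injectivity via uniqueness in the Cauchy problem \eqref{cauchy-psi*}. Your treatment is in fact more explicit than the paper's terse argument --- particularly in verifying that the graph produced in the surjectivity step has non-vanishing Gaussian curvature near $p^*$, a membership condition for ${\cal M}$ that the paper does not spell out.
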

\begin{proof} We may assume that $p^*=(0,0,z_0^*)$ and it is clear from Proposition \ref{jordan-curve} and  Proposition \ref{graph-phi*} that $\varTheta$ is well-defined and  surjective. 

Now, we prove the injectivity $\varTheta$. Consider $\Sigma_1, \Sigma_2\in {\cal M}:$ and $(\psi_1,\psi_1^*)$ and $(\psi_2,\psi_2^*)$ their respective, associate Calabi's pairs defined on their canonical conformal parameters. Then, $\psi_1^*$ and $\psi_2^*$ are solutions of the same Cauchy problem \eqref{cauchy-psi*} and, by uniqueness, $ \psi_1^*(u,v)=\psi_2^*(u,v)$. That is, $\Sigma_1$ and $\Sigma_2$  overlap on a neighborhood of the singularity which finishes the proof. 
\end{proof}

\end{document}